\newtheorem{theorem}{Theorem}[section]
\newtheorem{corollary}[theorem]{Corollary}
\newtheorem{proposition}[theorem]{Proposition}
\newtheorem{lemma}[theorem]{Lemma}
\newcommand{\RNum}[1]{\uppercase\expandafter{\romannumeral #1\relax}}
\newcommand{\mat}[1]{\mathbf{#1}}
\def \R {\mathbb{R}}
\def \rank {\mathrm{rank}}
\def \st {\mathrm{ s.t. }}
\begin{document}

\title{Rank properties and computational methods for orthogonal tensor decompositions}

\author{Chao Zeng \thanks{E-mail: zengchao@nankai.edu.cn}}

\date{}
\maketitle

\emph{\textbf{Abstract\ } The orthogonal decomposition factorizes a tensor into a sum of an
orthogonal list of rank-one tensors. We present several properties of orthogonal rank. We find
that a subtensor may have a larger orthogonal rank than the whole tensor and prove the
lower semicontinuity of orthogonal rank. The lower
semicontinuity guarantees the existence of low orthogonal rank approximation. To fit the
orthogonal decomposition, we propose an algorithm based on the augmented Lagrangian method and
guarantee the orthogonality by a novel orthogonalization procedure. Numerical experiments show
that the proposed method has a great advantage over the existing methods for strongly orthogonal
decompositions in terms of the approximation error.
}

\vspace{0.5cm}
\noindent \textbf{Keywords\ } orthogonal tensor decomposition, orthogonal rank,
augmented Lagrangian method, orthogonalization

\vspace{0.5cm}
\noindent \textbf{Mathematics Subject Classifications (2010)\ }  15A69, 49M27, 90C26, 90C30

\section{Introduction}

Given a tensor $\mathcal{A}\in \R^{I_1\times \cdots \times I_N}$,  the CANDECOMP/PARAFAC (CP)
decomposition factorizes it into a sum of rank-one tensors:
$$
\mathcal{A} = \sum_{k=1}^{K} \mat{v}^{(1)}_k \otimes \cdots \otimes \mat{v}^{(N)}_k,
$$
where $\mat{v}^{(n)}_k\in \R^{I_n},k=1,\ldots,K,n=1,\ldots,N$. Usually, it is difficult to determine
the number $K$ for expressing $\mathcal{A}$ exactly \cite{haastad1990tensor,hillar2013most}.
Hence, the following approximate CP decomposition is more meaningful in practical applications:
$$
\min_{\mat{v}^{(n)}_r\in \R^{I_n}}\left\|\mathcal{A}-\sum_{r=1}^{R} \mat{v}^{(1)}_r
\otimes \cdots \otimes \mat{v}^{(N)}_r \right\|,
$$
where $R$ is a prescribed number. This problem is just to find a best rank-$R$ approximation
to $\mathcal{A}$. Unfortunately, this problem has no solution in general
\cite{de2008tensor,krijnen2008on}. See the discussion in Section \ref{sec_cp} for details.

As mentioned in \cite{de2008tensor}, the major open question in tensor approximation is how to
overcome the ill-posedness of the low rank approximation problem. One natural
strategy is to impose orthogonality constraints, because the orthogonality is an inherent property
of second-order tensor rank decompositions, i.e., matrix singular value decompositions (SVD).
The orthogonal tensor decomposition can be traced back to \cite{comon1994independent} for the symmetric
case, and then is studied in \cite{kolda2001orthogonal} for the general case:
\begin{equation}\label{intro}
\mathcal{A} = \sum_{r=1}^{R} \mathcal{T}_r, \quad \text{where } \rank(\mathcal{T}_r)=1
\text{ and } \left\langle\mathcal{T}_s,
\mathcal{T}_t \right \rangle = 0 \text{ for all } 1\leq s\neq t \leq R.
\end{equation}
This decomposition is related to nuclear and spectral norms of tensors; see
\cite{derksen2016nuclear,friedland2018nuclear,li2018orthogonal}. In \cite{lim2013blind},
the orthogonality constraint is extended to general angular constraints, where several properties
including the existence, uniqueness and exact recoverability are discussed.
As a special case of decompositions with angular constraints, the orthogonal tensor
decomposition also has these properties.

The earliest method for computing the low orthogonal rank approximation
is the greedy approach presented in \cite{kolda2001orthogonal},
where one rank-one component is updated in one iteration. Specifically, suppose we have
obtained $k$ rank-one components. The $(k+1)$st rank-one component is updated by
\begin{align*}
\min_{\mathcal{U}}\quad & \left\|\mathcal{A}-\sum_{r=1}^{k}\mathcal{T}_r-\mathcal{U}\right\| \\
\st \quad & \rank(\mathcal{U})=1 \text{\quad and \quad } \left\langle\mathcal{T}_r,
\mathcal{U} \right \rangle = 0, \ r=1,\ldots,k.
\end{align*}
This method is reasonable only if the Eckart-Young theorem \cite{eckart1936the} can be extended
to the orthogonal decomposition, i.e., the best low orthogonal rank approximation can be obtained
by truncating the orthogonal rank decomposition (see Section \ref{sec3} for the definition).
Refer to \cite[Section 5]{kolda2001orthogonal} for details.
However, a counterexample presented in \cite{kolda2003counterexample}
shows that such an extension is not possible. Suppose $\mathcal{T}_r=\otimes_{n=1}^N \mat{v}_r^{(n)}$
in (\ref{intro}). The constraint has the following form
$$
\prod_{n=1}^{N}\left\langle \mat{v}_s^{(n)},\mat{v}_t^{(n)}
\right\rangle=0 \quad  \text{ for all } s\neq t.
$$
This means that there exists at least one $m \in \{1,\ldots,N\}$ such that
$\left\langle \mat{v}_s^{(m)},\mat{v}_t^{(m)}\right\rangle=0$. However, we cannot determine
the number $m$ for different pairs of $s,t$. This is the main difficulty in fitting orthogonal
decompositions. Practical existing algorithms are proposed
by fixing the number $m$. Actually, these algorithms are aimed at
strongly orthogonal decompositions, whose one or more factor matrices are orthogonal;
see Section \ref{sec_od} for details. The case where all (normalized) factor matrices
are orthogonal is considered in \cite{chen2008on}; the case where one factor matrix is
orthogonal is considered in \cite{sorensen2012canonical,wang2015orthogonal}; the case
where an arbitrary number of factor matrices are orthogonal is considered in
\cite{guan2019numerical}. In a recent work \cite{yang2020epsilon}, a globally convergent
algorithm is developed to compute general strongly orthogonal decompositions.
All these algorithms follow a similar framework, by
combining the alternating minimization method and the polar decomposition.
For factor matrices with general angular constraints, a proximal gradient algorithm
is proposed in \cite{nazih2020using}. In \cite{martin2008jacobi}, the Jacobi SVD algorithm
is extended to reduce a tensor to a form with the $\ell_2$ norm of the diagonal
vector being maximized. The resulting form is not diagonal and hence this is not
an algorithm for orthogonal decompositions discussed in this paper.

In this paper, we first study orthogonal rank. We find that there are
many differences between orthogonal rank and tensor rank. Orthogonal rank
may be variant under the invertible $n$-mode product, a subtensor may have a larger
orthogonal rank than the whole tensor, and orthogonal rank is
lower semicontinuous. A refined upper bound of orthogonal rank \cite{li2018orthogonal}
is given. As for the algorithm, we employ the augmented Lagrangian method to convert
(\ref{intro}) into an unconstrained problem. Then the unconstrained problem can be
solved by gradient-based optimization methods.
To guarantee the orthogonality of the final result, we develop an orthogonalization
procedure. Numerical experiments show that our method has a great advantage over the
existing methods for strongly orthogonal decompositions in terms of the approximation error.

The rest of this paper is organized as follows. Section \ref{sec2} recalls some preliminary
materials. In Section \ref{sec3}, we present several properties of orthogonal
rank. The algorithm is proposed in Section \ref{sec4}. Experimental results are
given in Section \ref{sec5}. Conclusions are presented in Section \ref{sec6}.

\subsection*{Notation}

We use bold-face lowercase letters ($\mat{a}, \mat{b}, \ldots$) to denote vectors,
bold-face capitals ($\mat{A}, \mat{B}, \ldots$) to denote matrices and calligraphic letters
($\mathcal{A}, \mathcal{B}, \ldots$) to denote tensors. The notations $\mat{I}$ and
$\mat{0}$ denote the identity matrix and the zero matrix of suitable dimensions, respectively.
The $(i_1,i_2,\cdots,i_N)$th element of $\mathcal{A}$ is denoted by $a_{i_1i_2\cdots i_N}$.
The $n$-mode product of a tensor $\mathcal{A}$ by a matrix
$\mat{M}$ is denoted by $\mat{M}\cdot_n \mathcal{A}$.
Following \cite{de2008tensor}, we write $\mat{M}_1\cdot_1 \cdots \mat{M}_N\cdot_N \mathcal{A}$
more concisely as $(\mat{M}_1,\cdots,\mat{M}_N)\cdot \mathcal{A}$.

\section{Preliminaries}\label{sec2}

\subsection{Inner product, angle and orthogonality}

Let $\mathcal{A},\mathcal{B} \in \R^{I_1\times \cdots \times I_N}$. The inner product of $\mathcal{A},
\mathcal{B}$ is defined by
$$
\langle \mathcal{A},\mathcal{B}\rangle := \sum_{i_1=1}^{I_1}\cdots \sum_{i_N=1}^{I_N}
a_{i_1,\cdots,i_N}b_{i_1,\cdots,i_N},
$$
and the norm of $\mathcal{A}$ induced by this inner product is
$\|\mathcal{A}\|= \sqrt{\langle \mathcal{A},\mathcal{A}\rangle}$.
Let $\mathcal{U}=\mat{u}^{(1)}\otimes \cdots \otimes \mat{u}^{(N)}$ and
$\mathcal{V}=\mat{v}^{(1)}\otimes \cdots \otimes \mat{v}^{(N)}$. Then
\begin{equation}\label{inner}
\langle \mathcal{U}, \mathcal{V}\rangle = \prod_{n=1}^{N}\langle \mat{u}^{(n)},\mat{v}^{(n)}\rangle
\text{\quad and \quad } \|\mathcal{U}\|=\prod_{n=1}^{N}\|\mat{u}^{(n)}\|.
\end{equation}
We say that $\mathcal{A}$ is a \emph{unit} tensor if $\|\mathcal{A}\|=1$.

The \emph{angle} between $\mathcal{A},\mathcal{B}$ is defined by
\begin{equation}\label{Tangle}
\angle(\mathcal{A},\mathcal{B}):=\arccos\left\langle \frac{\mathcal{A}}{\|\mathcal{A}\|},
\frac{\mathcal{B}}{\|\mathcal{B}\|} \right\rangle.
\end{equation}
Two tensors $\mathcal{A},\mathcal{B}$ are \emph{orthogonal} ($\mathcal{A} \bot \mathcal{B}$) if
$\langle \mathcal{A},\mathcal{B}\rangle=0$, i.e., $\angle(\mathcal{A},\mathcal{B})= \pi/2$.
In (\ref{inner}), $\mathcal{U}$ and $\mathcal{V}$ are
orthogonal if $\prod_{n=1}^{N}\langle \mat{u}^{(n)},\mat{v}^{(n)}\rangle =0$.
This leads to other options for defining orthogonality of two rank-one tensors. Given
$1 \leq i_1 < \cdots < i_M \leq N$, we say that $\mathcal{U}$ and $\mathcal{V}$ are
\emph{$(i_1,\cdots,i_M)$-orthogonal} if
$$
\left \langle \mat{u}^{(i_m)},\mat{v}^{(i_m)}\right\rangle = 0 \quad
\forall 1 \leq m \leq M.
$$
If $M=N$, we say that $\mathcal{U}$ and $\mathcal{V}$ are \emph{completely orthogonal}.

A list of tensors $\mathcal{T}_1,\cdots,\mathcal{T}_m$ is said to be orthogonal if
$\langle \mathcal{T}_i,\mathcal{T}_j\rangle = 0$ for all distinct $i,j\in \{1,\ldots,m\}$.
An orthogonal list of tensors is an orthonormal list if each of its elements is a unit tensor.
Similarly, we can define an $(i_1,\cdots,i_M)$-orthogonal list of rank-one tensors.

\subsection{CP decompositions and tensor rank}\label{sec_cp}

The CP decomposition factorizes a tensor into a sum of rank-one
tensors:
\begin{equation}\label{cpd}
\mathcal{A} = \sum_{r=1}^{R}\mat{v}_r^{(1)}  \otimes \cdots
\otimes \mat{v}_r^{(N)}:=[\![ \mat{V}^{(1)},\cdots,\mat{V}^{(N)}]\!],
\end{equation}
where the $n$th \emph{factor matrix} is
\begin{equation}\label{factor}
\mat{V}^{(n)} = \begin{bmatrix}\mat{v}_1^{(n)}& \cdots & \mat{v}_R^{(n)} \end{bmatrix}.
\end{equation}
An interesting property of tensors is that their CP decompositions are often unique.
Refer to \cite[Section 3.2]{kolda2009tensor} for detailed introductions.
The most famous results \cite{kruskal1977three,sidiropoulos2000uniqueness} on the uniqueness
condition depend on the concept of $k$-rank. The $k$-rank of a matrix $\mat{M}$, denoted by
$k_{\mat{M}}$, is the largest integer such that every set containing $k_{\mat{M}}$ columns
of $\mat{M}$ is linearly independent. For the CP decomposition (\ref{cpd}), its uniqueness
condition presented in
\cite{sidiropoulos2000uniqueness} is
\begin{equation}\label{unique}
\sum_{n=1}^{N} k_{\mat{V}^{(n)}} \geq 2R+N-1.
\end{equation}

The rank of $\mathcal{A}$ is defined by $\rank(\mathcal{A}):=\min \left\{R: \mathcal{A}
= \sum_{r=1}^{R}\mat{v}_r^{(1)} \otimes \cdots \otimes \mat{v}_r^{(N)}\right\}$.
Given $R>0$, the following problem
\begin{equation}\label{cp}
\min_{\rank(\mathcal{B})\leq R} \left\|\mathcal{A} - \mathcal{B} \right\|
\end{equation}
aims to find the \emph{best rank-$R$ approximation} of $\mathcal{A}$. However, (\ref{cp})
has no solution in general \cite{de2008tensor,krijnen2008on}. The essential reason is the following
feature of tensor rank.

\begin{proposition}[\cite{de2008tensor}]\label{notlow}
Let $R\geq 2$. The set $\{\mathcal{A}\in \R^{I_1\times \cdots \times I_N}: \rank(\mathcal{A})\leq R\}$
is not closed in the normed space $\R^{I_1\times \cdots \times I_N}$. That is,
the function $\rank(\mathcal{A})$ is not lower semicontinuous.
\end{proposition}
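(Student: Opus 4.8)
The plan is to use the standard characterization that a function is lower semicontinuous if and only if all of its sublevel sets are closed; thus it suffices to exhibit, for each $R\geq 2$, a single sequence $\mathcal{A}_n\to\mathcal{A}$ with $\rank(\mathcal{A}_n)\leq R$ for all $n$ but $\rank(\mathcal{A})>R$. Such a sequence witnesses at once that $\{\mathcal{B}:\rank(\mathcal{B})\leq R\}$ fails to be closed and that $\liminf_n\rank(\mathcal{A}_n)\leq R<\rank(\mathcal{A})$, i.e. lower semicontinuity fails. The phenomenon is genuinely of order three or higher: for $N=2$ the rank is the matrix rank, whose sublevel sets are determinantal varieties and hence closed, so throughout I assume $N\geq 3$ and that the format is large enough to contain a tensor of rank exceeding $R$ (otherwise the sublevel set is the whole space and is trivially closed); $I_1,\dots,I_N\geq R$ will suffice.

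First I would settle the base case $R=2$, $N=3$ in $\R^{2\times2\times2}$. After a rank-preserving change of basis in each mode, take $\mat{e}_1,\mat{e}_2$ the standard basis of $\R^2$ and set
$$
\mathcal{A}=\mat{e}_1\otimes\mat{e}_1\otimes\mat{e}_2+\mat{e}_1\otimes\mat{e}_2\otimes\mat{e}_1+\mat{e}_2\otimes\mat{e}_1\otimes\mat{e}_1 .
$$
As approximants I would use
$$
\mathcal{A}_n=n\left(\mat{e}_1+\tfrac{1}{n}\mat{e}_2\right)^{\otimes 3}-n\,\mat{e}_1\otimes\mat{e}_1\otimes\mat{e}_1 ,
$$
each a sum of two rank-one tensors, so $\rank(\mathcal{A}_n)\leq 2$. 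Expanding the outer product and cancelling the $n\,\mat{e}_1\otimes\mat{e}_1\otimes\mat{e}_1$ term leaves $\mathcal{A}_n=\mathcal{A}+O(1/n)$, whence $\mathcal{A}_n\to\mathcal{A}$; this verification is a routine multilinear expansion.

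The crux is the lower bound $\rank(\mathcal{A})\geq 3$. I would argue through the two frontal slices of $\mathcal{A}$ in mode $3$, namely the $2\times2$ matrices $\mat{X}_1=\mat{e}_1\mat{e}_2^{\top}+\mat{e}_2\mat{e}_1^{\top}=\left[\begin{smallmatrix}0&1\\1&0\end{smallmatrix}\right]$ and $\mat{X}_2=\mat{e}_1\mat{e}_1^{\top}=\left[\begin{smallmatrix}1&0\\0&0\end{smallmatrix}\right]$. A rank-two decomposition of a $2\times2\times2$ tensor is equivalent to the slice pencil $\lambda\mat{X}_1+\mu\mat{X}_2$ being simultaneously diagonalizable by equivalence, which for a pencil of $2\times2$ matrices happens precisely when the binary form $\det(\lambda\mat{X}_1+\mu\mat{X}_2)$ has distinct roots. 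Here $\det(\lambda\mat{X}_1+\mu\mat{X}_2)=-\lambda^2$ has a double root, so the Kronecker canonical form of the pencil is non-semisimple and the rank is forced up to $3$. Establishing this lower bound rigorously is the main obstacle, since every other step is bookkeeping; equivalently, one may simply invoke that $\mathcal{A}$ is the classical rank-three limit of rank-two tensors of \cite{de2008tensor,kolda2003counterexample}.

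Finally I would lift the example to arbitrary $R\geq 2$ and $N\geq 3$. Raising the order from $3$ to $N$ is harmless: tensoring every rank-one term with fixed nonzero vectors $\mat{e}^{(4)},\dots,\mat{e}^{(N)}$ in the remaining modes is a rank-preserving linear embedding (its inverse is a multilinear contraction, which cannot increase rank), so both the approximants and their limit transport unchanged. To raise the rank, I would place the block above in coordinates $\{1,2\}$ of each mode and add the fixed diagonal tensor $\mathcal{D}=\sum_{j=3}^{R}\mat{e}_j\otimes\mat{e}_j\otimes\mat{e}_j$ of rank $R-2$ on disjoint coordinates; then $\rank(\mathcal{A}_n\oplus\mathcal{D})\leq 2+(R-2)=R$ while the limit is $\mathcal{A}\oplus\mathcal{D}$. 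The one nontrivial remaining point is additivity of rank over this direct sum, $\rank(\mathcal{A}\oplus\mathcal{D})=\rank(\mathcal{A})+\rank(\mathcal{D})=R+1>R$: although tensor rank is not additive over direct sums in general, additivity does hold when one summand is diagonal, which can be established by the substitution method. This completes the construction.
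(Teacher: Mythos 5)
The paper offers no proof of this proposition: it is imported verbatim from \cite{de2008tensor}, so there is no internal argument to compare against, and your proposal should be judged as a reconstruction of the cited result. On that basis it is correct, and it is in fact the same construction used in \cite{de2008tensor,kolda2003counterexample}: the rank-three tensor $\mathcal{A}=\mat{e}_1\otimes\mat{e}_1\otimes\mat{e}_2+\mat{e}_1\otimes\mat{e}_2\otimes\mat{e}_1+\mat{e}_2\otimes\mat{e}_1\otimes\mat{e}_1$ approximated by the rank-two tensors $n(\mat{e}_1+\frac{1}{n}\mat{e}_2)^{\otimes 3}-n\,\mat{e}_1^{\otimes 3}$. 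Your convergence check is routine and right, and your pencil argument for $\rank(\mathcal{A})\geq 3$ is sound as applied here, though your stated criterion (``rank two iff the determinant form has distinct roots'') is not literally true for arbitrary pencils (it fails, e.g., when the determinant form vanishes identically); what saves you is that $\mat{X}_1$ and $\mat{X}_2$ are linearly independent and $\mat{X}_1$ is nonsingular, so a rank-$\leq 2$ decomposition would force the two-dimensional slice pencil to be spanned by two independent rank-one matrices, producing two distinct roots of $\det(\lambda\mat{X}_1+\mu\mat{X}_2)=-\lambda^2$, a contradiction (equivalently, $\mat{X}_1^{-1}\mat{X}_2$ is a nonzero nilpotent, hence not diagonalizable). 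Your two lifting steps are where your write-up goes beyond the classical base case, and both are handled correctly: padding with fixed vectors in extra modes preserves rank because the inverse operation is a contraction, and the direct sum with the diagonal tensor $\mathcal{D}=\sum_{j=3}^{R}\mat{e}_j\otimes\mat{e}_j\otimes\mat{e}_j$ does require a rank-additivity argument (rank is not additive over direct sums in general), for which the substitution method you name suffices: peeling off one unit $\mat{e}_j^{\otimes 3}$ at a time and projecting modes two and three back onto the complementary coordinates shows each diagonal unit costs exactly one unit of rank. A final point in your favor: the proposition as stated in the paper is false for $N=2$ (matrix rank is lower semicontinuous) and for formats too small to contain a tensor of rank exceeding $R$; your explicit hypotheses $N\geq 3$ and $I_n\geq R$ repair this imprecision rather than inherit it.
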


\subsection{Orthogonal decompositions}\label{sec_od}

The orthogonal decomposition factorizes a tensor into a sum of an orthogonal list 
of rank-one tensors:
\begin{equation}\label{ord}
\mathcal{A} = \sum_{r=1}^{R} \mathcal{T}_r, \quad \text{where } \rank(\mathcal{T}_r)=1 \text{ and } 
\mathcal{T}_s \bot \mathcal{T}_t \text{ for all } 1\leq s\neq t \leq R.
\end{equation}
The following lemma can be obtained by a direct calculation based on (\ref{inner}).

\begin{lemma}\label{hard}
The decomposition (\ref{cpd}) is an orthogonal decomposition if and only if
$\mat{V}^{(1)^T}\mat{V}^{(1)}\circledast \cdots \circledast\mat{V}^{(N)^T}\mat{V}^{(N)}$
is diagonal, where ``$\circledast$" is the Hadamard product.
\end{lemma}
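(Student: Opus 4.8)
The plan is to show that the $(s,t)$-entry of the matrix $\mat{M} := \mat{V}^{(1)^T}\mat{V}^{(1)}\circledast \cdots \circledast\mat{V}^{(N)^T}\mat{V}^{(N)}$ is exactly the inner product $\langle \mathcal{T}_s, \mathcal{T}_t\rangle$ of the corresponding pair of rank-one summands, after which the equivalence is immediate. Write $\mathcal{T}_r = \mat{v}_r^{(1)}\otimes \cdots \otimes \mat{v}_r^{(N)}$ for the $r$th term of (\ref{cpd}), so that, by definition (\ref{ord}), the decomposition is orthogonal precisely when $\langle \mathcal{T}_s, \mathcal{T}_t\rangle = 0$ for every pair $s\neq t$.

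First I would unwind the Gram matrix of a single factor. Since the columns of $\mat{V}^{(n)}$ are $\mat{v}_1^{(n)},\ldots,\mat{v}_R^{(n)}$ by (\ref{factor}), the $(s,t)$-entry of $\mat{V}^{(n)^T}\mat{V}^{(n)}$ equals $\langle \mat{v}_s^{(n)}, \mat{v}_t^{(n)}\rangle$. Because the Hadamard product ``$\circledast$'' acts entrywise, the $(s,t)$-entry of $\mat{M}$ is the product over $n$ of these scalars, namely $\prod_{n=1}^{N}\langle \mat{v}_s^{(n)}, \mat{v}_t^{(n)}\rangle$. Invoking the first identity in (\ref{inner}), this product is precisely $\langle \mathcal{T}_s, \mathcal{T}_t\rangle$.

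With this entrywise identification in hand, I would read off both directions at once: $\mat{M}$ is diagonal exactly when all of its off-diagonal entries vanish, i.e.\ when $\langle \mathcal{T}_s, \mathcal{T}_t\rangle = 0$ for every $s\neq t$, which is the condition for (\ref{cpd}) to be an orthogonal decomposition. The diagonal entries $\langle \mathcal{T}_r, \mathcal{T}_r\rangle = \|\mathcal{T}_r\|^2$ are irrelevant to the orthogonality requirement, so no separate treatment of them is needed.

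There is no real obstacle here: the statement is a direct calculation. The only point requiring care is the bookkeeping that matches the $(s,t)$-entry of each factor Gram matrix $\mat{V}^{(n)^T}\mat{V}^{(n)}$ to the corresponding one-mode inner product $\langle \mat{v}_s^{(n)}, \mat{v}_t^{(n)}\rangle$ before multiplying them together via $\circledast$; everything then follows from (\ref{inner}).
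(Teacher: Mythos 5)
Your proof is correct and takes exactly the route the paper intends: the paper gives no explicit proof, stating only that the lemma "can be obtained by a direct calculation based on (\ref{inner})," and your entrywise identification of the $(s,t)$-entry of $\mat{V}^{(1)^T}\mat{V}^{(1)}\circledast \cdots \circledast\mat{V}^{(N)^T}\mat{V}^{(N)}$ with $\langle \mathcal{T}_s,\mathcal{T}_t\rangle$ via (\ref{inner}) is precisely that calculation. Nothing is missing.
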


The $(i_1,\cdots,i_M)$-orthogonal decomposition factorizes a tensor into a sum of an
$(i_1,\cdots,i_M)$-orthogonal list of rank-one tensors. Any type of an
$(i_1,\cdots,i_M)$-orthogonal decomposition is called a \emph{strongly orthogonal decomposition}
\footnote{Strongly orthogonal decomposition has a different definition in \cite{kolda2001orthogonal}.}.
Clearly, a strongly orthogonal decomposition is also an orthogonal decomposition.
However, we are not in general guaranteed that a strongly orthogonal decomposition exists.
Simple examples include the tensors with $\rank(\mathcal{A})>\max\{I_1,\ldots,I_N\}$
\footnote{Such tensors exist. See \cite[Lemma 4.7]{de2008tensor} for an example.}. This is
because an $(i_1,\cdots,i_M)$-orthogonal list consists of at most $\min\{I_{i_1},\ldots,I_{i_M}\}$
elements. Related discussions can be found in \cite{kolda2001orthogonal,chen2008on}.

There is a lot research on strongly orthogonal decompositions. The $(1,\cdots,N)$-orthogonal
decomposition, also called the \emph{completely orthogonal decomposition}, is discussed in
\cite{chen2008on}. The $(n)$-orthogonality, where $1\leq n \leq N$, is considered in
\cite{sorensen2012canonical,wang2015orthogonal}. General strongly orthogonal decompositions
are considered in \cite{guan2019numerical,yang2020epsilon}. General angular (see (\ref{Tangle}))
constraint decompositions are discussed in \cite{lim2013blind}.

\section{Properties of orthogonal rank}\label{sec3}

The \emph{orthogonal rank} of $\mathcal{A}$ is the smallest possible value of $R$ for which a
decomposition (\ref{ord}) is possible.
If $R=\rank_{\bot}(\mathcal{A})$ in (\ref{ord}), then (\ref{ord}) is called
an \emph{orthogonal rank decomposition}.

Clearly, $\rank_{\bot}(\mathcal{A})\geq \rank(\mathcal{A})$. The following lemma gives a sufficient
condition for $\rank_{\bot}(\mathcal{A})> \rank(\mathcal{A})$.

\begin{lemma}\label{r<or}
Let $\mat{V}^{(n)} \in \R^{I_n \times R}$ for $n=1,\ldots N$. If
$\mat{V}^{(1)^T}\mat{V}^{(1)}\circledast \cdots
\circledast\mat{V}^{(N)^T}\mat{V}^{(N)}$ is not diagonal, $R\geq 2$ and
$\rank(\mat{V}^{(n)})=R \ \forall n=1,\ldots N$, then
$\mathcal{A} = [\![ \mat{V}^{(1)},\cdots,\mat{V}^{(N)}]\!]$ satisfies
$\rank(\mathcal{A})=R<\rank_{\bot}(\mathcal{A})$.
\end{lemma}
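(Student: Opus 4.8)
The plan is to reduce both assertions to the essential uniqueness of the CP decomposition, which the rank hypotheses are tailored to supply via condition (\ref{unique}). First I would read off the $k$-ranks of the factor matrices: since $\mat{V}^{(n)}\in\R^{I_n\times R}$ has exactly $R$ columns and $\rank(\mat{V}^{(n)})=R$, these columns are linearly independent, so $k_{\mat{V}^{(n)}}=R$ for every $n$. Summing gives $\sum_{n=1}^N k_{\mat{V}^{(n)}}=NR$, and because
$$
NR-(2R+N-1)=(N-2)(R-1)-1\ge 0\quad\text{for } N\ge 3,\ R\ge 2,
$$
the condition (\ref{unique}) holds. Hence $[\![\mat{V}^{(1)},\dots,\mat{V}^{(N)}]\!]$ is, up to permutation and scaling, the unique CP decomposition of $\mathcal{A}$ with $R$ terms, and $\rank(\mathcal{A})=R$: every summand $\mathcal{T}_r:=\mat{v}_r^{(1)}\otimes\cdots\otimes\mat{v}_r^{(N)}$ is nonzero by full column rank, so a decomposition with fewer terms, padded to length $R$ by zero tensors, could not match it, contradicting uniqueness.

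For the strict inequality $\rank_\bot(\mathcal{A})>R$ I would argue by contradiction. Suppose $\rank_\bot(\mathcal{A})=R$; then $\mathcal{A}=\sum_{r=1}^R\mathcal{S}_r$ for an orthogonal list of rank-one tensors $\mathcal{S}_r$, which is a second CP decomposition of $\mathcal{A}$ with $R$ terms. By the uniqueness just established, $\{\mathcal{S}_r\}$ coincides with $\{\mathcal{T}_r\}$ up to a permutation, the scaling freedom merely redistributing norms among factors while leaving each rank-one tensor unchanged. The crucial observation is that orthogonality of a list of rank-one tensors is a property of the tensors themselves --- it is encoded solely in the inner products $\langle\mathcal{T}_s,\mathcal{T}_t\rangle$ --- and is therefore invariant under permutation and factor scaling. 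Thus $\{\mathcal{T}_r\}$ is orthogonal too, and Lemma \ref{hard} forces $\mat{V}^{(1)^T}\mat{V}^{(1)}\circledast\cdots\circledast\mat{V}^{(N)^T}\mat{V}^{(N)}$ to be diagonal, contradicting the hypothesis. Hence no $R$-term orthogonal decomposition exists, and $\rank_\bot(\mathcal{A})>R$.

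The main obstacle is conceptual rather than computational: one must check that uniqueness pins down the rank-one summands as tensors (not merely their individual factors), and that orthogonality survives exactly the permutation-and-scaling indeterminacy that uniqueness leaves free; once this invariance is isolated, the contradiction is immediate. A secondary point is the range of the displayed inequality: the argument requires $N\ge 3$, and this is sharp, since for $N=2$ the object is a matrix whose SVD already supplies an orthogonal decomposition with $\rank(\mathcal{A})$ terms, so the stated conclusion genuinely fails there.
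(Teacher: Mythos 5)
Your proof is correct and follows essentially the same route as the paper's: full column rank gives $k_{\mat{V}^{(n)}}=R$, so the uniqueness condition (\ref{unique}) holds, hence $\rank(\mathcal{A})=R$, and uniqueness combined with Lemma \ref{hard} rules out any $R$-term orthogonal decomposition. The only substantive difference is that you make explicit the requirement $N\geq 3$ (and the genuine failure of the statement for $N=2$, where the SVD gives an orthogonal decomposition with $\rank(\mathcal{A})$ terms), a point the paper's proof leaves implicit when it asserts $NR\geq 2R+N-1$ from $R\geq 2$ alone.
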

\begin{proof}
Since $\rank(\mat{V}^{(n)})=R$ and $R\geq 2$, we have
$$
\sum_{n=1}^{N} k_{\mat{V}^{(n)}}=NR \geq 2R+N-1.
$$
By (\ref{unique}), this decomposition is unique and $\rank(\mathcal{A})=R$.

On the other hand, by Lemma \ref{hard}, this decomposition is not an orthogonal decomposition.
Due to the uniqueness, there does not exist an orthogonal decomposition
with $R$ terms, i.e., $\rank_{\bot}(\mathcal{A})>R$.
\end{proof}

In \cite{chen2008on}, the existence of the completely orthogonal decomposition is discussed.
We can use such existence to give a sufficient condition for
$\rank_{\bot}(\mathcal{A})= \rank(\mathcal{A})$.

\begin{lemma}
If $\mathcal{A}$ admits a completely orthogonal decomposition, then
$\rank_{\bot}(\mathcal{A})= \rank(\mathcal{A})$.
\end{lemma}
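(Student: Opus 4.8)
The plan is to use the inequality $\rank_{\bot}(\mathcal{A}) \geq \rank(\mathcal{A})$, which is noted just above the lemma to hold for every tensor, so that it suffices to establish the reverse inequality $\rank_{\bot}(\mathcal{A}) \leq \rank(\mathcal{A})$. To this end I would fix a completely orthogonal decomposition $\mathcal{A} = \sum_{r=1}^{R} \mathcal{T}_r$ with $\mathcal{T}_r = \mat{v}_r^{(1)} \otimes \cdots \otimes \mat{v}_r^{(N)}$, discarding any zero terms so that every $\mat{v}_r^{(n)} \neq \mat{0}$. Such a decomposition is in particular an orthogonal decomposition in the sense of (\ref{ord}), since $\langle \mathcal{T}_s, \mathcal{T}_t \rangle = \prod_{n=1}^{N} \langle \mat{v}_s^{(n)}, \mat{v}_t^{(n)} \rangle = 0$ whenever $s \neq t$. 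Hence $\rank_{\bot}(\mathcal{A}) \leq R$, and the whole proof reduces to showing that $R = \rank(\mathcal{A})$.

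The upper bound $\rank(\mathcal{A}) \leq R$ is immediate, as the decomposition exhibits $\mathcal{A}$ as a sum of $R$ rank-one tensors. The substance is the lower bound $\rank(\mathcal{A}) \geq R$, and here I would pass to a matrix unfolding. Writing $\mat{A}_{(1)}$ for the mode-$1$ unfolding, one has $\mat{A}_{(1)} = \mat{V}^{(1)} \big(\mat{V}^{(N)} \odot \cdots \odot \mat{V}^{(2)}\big)^{T}$, where $\odot$ denotes the Khatri-Rao (column-wise Kronecker) product, so the $r$th column of $\mat{V}^{(N)} \odot \cdots \odot \mat{V}^{(2)}$ is $\mat{v}_r^{(N)} \otimes \cdots \otimes \mat{v}_r^{(2)}$. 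By (\ref{inner}) the inner product of its $s$th and $t$th columns equals $\prod_{n=2}^{N} \langle \mat{v}_s^{(n)}, \mat{v}_t^{(n)} \rangle$, which vanishes for $s \neq t$ by complete orthogonality. Thus both $\mat{V}^{(1)}$ and $\mat{V}^{(N)} \odot \cdots \odot \mat{V}^{(2)}$ have pairwise orthogonal nonzero columns, hence full column rank $R$, so $\rank(\mat{A}_{(1)}) = R$. Since the rank of any unfolding is a lower bound for the tensor rank, $\rank(\mathcal{A}) \geq \rank(\mat{A}_{(1)}) = R$, which forces $R = \rank(\mathcal{A})$ and therefore $\rank_{\bot}(\mathcal{A}) \leq \rank(\mathcal{A})$.

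I expect the lower bound to be the only real obstacle. It is tempting to argue merely that the $\mathcal{T}_r$ are pairwise orthogonal, hence linearly independent, hence that $R$ terms are forced; but linear independence of rank-one summands does not by itself determine the tensor rank — even for matrices, two linearly independent rank-one summands can add up to a rank-one matrix. The orthogonality in \emph{every} mode is exactly what lets the unfolding inherit full column rank $R$ from both factors, and this is the step that must be carried out with care. Combining $\rank_{\bot}(\mathcal{A}) \leq \rank(\mathcal{A})$ with the standing inequality $\rank_{\bot}(\mathcal{A}) \geq \rank(\mathcal{A})$ then yields the claimed equality.
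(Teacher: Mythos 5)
Your proof is correct, and it takes a genuinely different route from the paper's. The paper omits the proof, saying it can follow that of Lemma~\ref{r<or}: there, one observes that the factor matrices of a completely orthogonal decomposition with $R$ nonzero terms have pairwise orthogonal nonzero columns, hence $k_{\mat{V}^{(n)}}=R$ for every $n$, so the uniqueness condition (\ref{unique}) applies and certifies that the decomposition is a rank decomposition, $\rank(\mathcal{A})=R$; since it is in particular an orthogonal decomposition, $\rank_{\bot}(\mathcal{A})\leq R=\rank(\mathcal{A})$, and equality follows from the standing inequality $\rank_{\bot}\geq\rank$. You reach $\rank(\mathcal{A})=R$ by a more elementary mechanism: the mode-$1$ unfolding $\mat{A}_{(1)}=\mat{V}^{(1)}\bigl(\mat{V}^{(N)}\odot\cdots\odot\mat{V}^{(2)}\bigr)^{T}$ has rank $R$ because both factors have orthogonal nonzero columns, and tensor rank dominates unfolding rank. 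This buys two things. First, it avoids the Kruskal/Sidiropoulos--Bro uniqueness machinery entirely and is plain linear algebra. Second, it is uniform in $N$ and $R$: the $k$-rank inequality $NR\geq 2R+N-1$ invoked in Lemma~\ref{r<or} actually requires $R\geq 2$ \emph{and} $N\geq 3$ (it fails for matrices and for $R=1$), so the paper's sketch needs those edge cases dispatched separately, whereas your unfolding argument covers them without change. What the paper's route buys in exchange is economy---it reuses a proof already on the page---and the essential uniqueness of the decomposition as a by-product. Your closing caveat is also well taken: pairwise orthogonality (hence linear independence) of the summands alone would not pin down $\rank(\mathcal{A})$, and it is precisely the orthogonality in every mode that transfers full column rank to both factors of the unfolding.
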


The proof of this lemma can follow that of Lemma \ref{r<or}. We omit it here.

Suppose $\mathcal{A}$ is a subtensor of $\mathcal{B}$, then
$\rank(\mathcal{A}) \leq \rank(\mathcal{B})$.
It comes as a surprise that the analogue does not hold for orthogonal rank.
See the next proposition.

\begin{proposition}
Let $\mat{V}^{(n)} \in \R^{I_n \times R}$ for $n=1,\ldots N$ and
$\mathcal{A} = [\![ \mat{V}^{(1)},\cdots,\mat{V}^{(N)}]\!]$. If
$\mat{V}^{(1)^T}\mat{V}^{(1)}\circledast \cdots
\circledast\mat{V}^{(N)^T}\mat{V}^{(N)}$ is not diagonal, $R\geq 2$ and
$\rank(\mat{V}^{(n)})=R \ \forall n=1,\ldots N$, then there exists a tensor $\mathcal{B}$ such that
$$
\mathcal{A} \text{ is a subtensor of } \mathcal{B}\quad \text{ and }\quad
\rank_{\bot}(\mathcal{B})<\rank_{\bot}(\mathcal{A}).
$$
\end{proposition}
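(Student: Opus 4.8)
The plan is to construct $\mathcal{B}$ by enlarging a single mode of $\mathcal{A}$: I would append extra rows to one factor matrix so that its columns become orthogonal, thereby turning the (non-orthogonal) CP decomposition of $\mathcal{A}$ into an $(m)$-orthogonal decomposition of $\mathcal{B}$ with only $R$ terms. Since Lemma \ref{r<or} already guarantees $\rank_{\bot}(\mathcal{A})>R$ under the stated hypotheses, producing any orthogonal decomposition of $\mathcal{B}$ with at most $R$ terms will suffice.

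First I would fix a mode, say $m=1$, and consider the Gram matrix $\mat{G}=\mat{V}^{(1)^T}\mat{V}^{(1)}$. Because $\rank(\mat{V}^{(1)})=R$, its columns are linearly independent and $\mat{G}$ is symmetric positive definite. Choosing any scalar $c\geq \lambda_{\max}(\mat{G})$, the matrix $c\mat{I}-\mat{G}$ is positive semidefinite, so it factors as $c\mat{I}-\mat{G}=\mat{W}^T\mat{W}$ for some $\mat{W}\in\R^{J\times R}$ (for instance via a spectral or Cholesky factorization, with $J\leq R$). Setting $\hat{\mat{V}}^{(1)}=\begin{bmatrix}\mat{V}^{(1)}\\ \mat{W}\end{bmatrix}\in\R^{(I_1+J)\times R}$ yields $\hat{\mat{V}}^{(1)^T}\hat{\mat{V}}^{(1)}=c\mat{I}$, so the columns of $\hat{\mat{V}}^{(1)}$ are orthogonal.

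Next I would define $\mathcal{B}=[\![\hat{\mat{V}}^{(1)},\mat{V}^{(2)},\cdots,\mat{V}^{(N)}]\!]$. Because the first factor matrix has orthogonal columns, the term inner products $\langle \hat{\mat{v}}_s^{(1)},\hat{\mat{v}}_t^{(1)}\rangle\prod_{n\geq 2}\langle \mat{v}_s^{(n)},\mat{v}_t^{(n)}\rangle$ vanish for $s\neq t$, so this is a $(1)$-orthogonal decomposition, hence an orthogonal decomposition with $R$ terms, giving $\rank_{\bot}(\mathcal{B})\leq R$. Moreover $\mathcal{A}$ is recovered as the subtensor of $\mathcal{B}$ obtained by restricting the first mode to the indices $\{1,\ldots,I_1\}$, since the top $I_1$ rows of $\hat{\mat{V}}^{(1)}$ are exactly $\mat{V}^{(1)}$. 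Combining with Lemma \ref{r<or}, I conclude $\rank_{\bot}(\mathcal{B})\leq R<\rank_{\bot}(\mathcal{A})$, as required.

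The construction itself is elementary; the step requiring care is verifying that appending rows genuinely leaves $\mathcal{A}$ as a subtensor and that the full-column-rank hypothesis $\rank(\mat{V}^{(n)})=R$ is precisely what makes $\mat{G}$ positive definite, which is what licenses the lift-to-orthogonality step $c\mat{I}-\mat{G}\succeq 0$. The main conceptual obstacle is recognizing that orthogonal rank can be decreased at all by embedding into a larger ambient space; once one sees that the extra dimensions provide the room needed to orthogonalize the otherwise entangled rank-one terms, the remainder is routine linear algebra.
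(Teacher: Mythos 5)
Your proposal is correct and follows essentially the same route as the paper: both append rows $\mat{W}$ (the paper's $\mat{M}$) obtained from a factorization $c\mat{I}-\mat{V}^{(1)^T}\mat{V}^{(1)}=\mat{W}^T\mat{W}$ to the first factor matrix, so that $\mathcal{B}=[\![\hat{\mat{V}}^{(1)},\mat{V}^{(2)},\cdots,\mat{V}^{(N)}]\!]$ is a $(1)$-orthogonal decomposition containing $\mathcal{A}$ as a subtensor, and then invoke Lemma \ref{r<or} for $\rank_{\bot}(\mathcal{A})>R$. The only (harmless) difference is that you conclude $\rank_{\bot}(\mathcal{B})\leq R$ where the paper asserts equality; the inequality is all that is needed.
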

\begin{proof}
We can find a sufficiently large $t$ such that $t\mat{I}-\mat{V}^{(1)^T}\mat{V}^{(1)}$ is
positive semidefinite. Then there exists a matrix $\mat{M}$ with $R$ columns such that
$$
t\mat{I}-\mat{V}^{(1)^T}\mat{V}^{(1)}=\mat{M}^T\mat{M}.
$$
Denote $\mat{V}=\begin{bmatrix} \mat{V}^{(1)} \\  \mat{M} \end{bmatrix}$. Then
$\mathcal{B}=[\![ \mat{V},\mat{V}^{(2)},\cdots,\mat{V}^{(N)}]\!]$ is an orthogonal decomposition.
Using a proof like that of Lemma \ref{r<or}, we have
$\rank_{\bot}(\mathcal{B})=R<\rank_{\bot}(\mathcal{A})$.
\end{proof}

A basic property of tensor rank is its invariance under the invertible $n$-mode product.
If $\mat{M}_n$ is invertible for $n=1,\dots,N$, \cite[Lemma 2.3]{de2008tensor} tells us that
$$
\rank((\mat{M}_1,\cdots,\mat{M}_N)\cdot \mathcal{A}) = \rank(\mathcal{A}).
$$
However, this property does not hold for orthogonal rank. Counterexamples can be constructed based on
Lemma \ref{r<or}. Due to the fact that $\rank(\mat{V}^{(1)})=R$, there exists an invertible matrix
$\mat{M} \in \R^{I_1\times I_1}$ satisfying $\mat{M}(:,1:R)=\mat{V}^{(1)}$. Then
$\mat{M}^{-1}\mat{V}^{(1)}=\begin{bmatrix} \mat{I} \\ \mat{0} \end{bmatrix}$ and
$\mat{M}^{-1}\cdot_1 \mathcal{A} = [\![ \mat{M}^{-1}\mat{V}^{(1)},\mat{V}^{(2)},\cdots,\mat{V}^{(N)}]\!]$
is an orthogonal decomposition. Therefore,
$$
\rank_{\bot}(\mat{M}^{-1}\cdot_1 \mathcal{A})=\rank(\mat{M}^{-1}\cdot_1 \mathcal{A})
= \rank(\mathcal{A}) < \rank_{\bot}(\mathcal{A}).
$$

If the $n$-mode product is orthogonal, we have the following lemma.

\begin{lemma}\label{orun}
Let $\mathcal{A}\in \R^{I_1\times \cdots \times I_N}$ and $\mat{M}_n \in \R^{I_n \times I_n}$ be
orthogonal for $n=1,\ldots,N$. Then
$$
\rank_{\bot}((\mat{M}_1,\cdots,\mat{M}_N)\cdot \mathcal{A}) = \rank_{\bot}(\mathcal{A}).
$$
\end{lemma}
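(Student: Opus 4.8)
The plan is to show the two inequalities separately by transporting orthogonal decompositions back and forth through the orthogonal $n$-mode products, using the fact that orthogonal matrices preserve the relevant inner products. Write $\mathcal{B}=(\mat{M}_1,\cdots,\mat{M}_N)\cdot \mathcal{A}$, where each $\mat{M}_n$ is orthogonal so that $\mat{M}_n^{-1}=\mat{M}_n^T$ is also orthogonal. The key observation is that the $n$-mode product by $\mat{M}_n$ sends a rank-one tensor to a rank-one tensor: if $\mathcal{T}=\mat{u}^{(1)}\otimes \cdots \otimes \mat{u}^{(N)}$, then $(\mat{M}_1,\cdots,\mat{M}_N)\cdot \mathcal{T}=(\mat{M}_1\mat{u}^{(1)})\otimes \cdots \otimes (\mat{M}_N\mat{u}^{(N)})$, which is again rank-one.

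First I would prove $\rank_{\bot}(\mathcal{B})\leq \rank_{\bot}(\mathcal{A})$. Take an orthogonal rank decomposition $\mathcal{A}=\sum_{r=1}^{R}\mathcal{T}_r$ with $R=\rank_{\bot}(\mathcal{A})$, each $\mathcal{T}_r$ rank-one and $\mathcal{T}_s\bot \mathcal{T}_t$ for $s\neq t$. Applying the multilinear map gives $\mathcal{B}=\sum_{r=1}^{R}(\mat{M}_1,\cdots,\mat{M}_N)\cdot \mathcal{T}_r$, and each summand is rank-one by the observation above. The essential step is that orthogonality is preserved: using the factorization (\ref{inner}) of the inner product of rank-one tensors together with the identity $\langle \mat{M}_n\mat{u},\mat{M}_n\mat{v}\rangle=\langle \mat{u},\mat{v}\rangle$ for orthogonal $\mat{M}_n$, one gets
$$
\langle (\mat{M}_1,\cdots,\mat{M}_N)\cdot \mathcal{T}_s,(\mat{M}_1,\cdots,\mat{M}_N)\cdot \mathcal{T}_t\rangle=\prod_{n=1}^{N}\langle \mat{M}_n\mat{u}_s^{(n)},\mat{M}_n\mat{u}_t^{(n)}\rangle=\prod_{n=1}^{N}\langle \mat{u}_s^{(n)},\mat{u}_t^{(n)}\rangle=\langle \mathcal{T}_s,\mathcal{T}_t\rangle=0.
$$
Hence $\mathcal{B}=\sum_{r=1}^{R}(\mat{M}_1,\cdots,\mat{M}_N)\cdot \mathcal{T}_r$ is an orthogonal decomposition of $\mathcal{B}$ with $R$ terms, so $\rank_{\bot}(\mathcal{B})\leq R=\rank_{\bot}(\mathcal{A})$.

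For the reverse inequality I would apply exactly the same argument to the orthogonal matrices $\mat{M}_n^{-1}=\mat{M}_n^T$, noting that $\mathcal{A}=(\mat{M}_1^{-1},\cdots,\mat{M}_N^{-1})\cdot \mathcal{B}$; this yields $\rank_{\bot}(\mathcal{A})\leq \rank_{\bot}(\mathcal{B})$, and the two bounds together give equality. There is no real obstacle here once the preservation of orthogonality is established; the only point requiring mild care is verifying that the map $\mathcal{T}\mapsto (\mat{M}_1,\cdots,\mat{M}_N)\cdot \mathcal{T}$ genuinely preserves the inner product of rank-one terms, which reduces via (\ref{inner}) to the elementary fact that orthogonal matrices are isometries on each mode. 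I would phrase the whole proof so that the $\leq$ direction is stated once and the reverse direction follows by symmetry, invoking the invertibility and orthogonality of the $\mat{M}_n^T$.
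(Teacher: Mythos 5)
Your proposal is correct and follows essentially the same route as the paper: push an orthogonal decomposition of $\mathcal{A}$ forward through the orthogonal $n$-mode products, note that mode-wise inner products (equivalently, the Gram matrices $\mat{V}^{(n)^T}\mat{V}^{(n)}$) are unchanged so orthogonality of the rank-one terms is preserved, and obtain the reverse inequality by applying the same argument with $\mat{M}_n^T$. The only cosmetic difference is that the paper certifies orthogonality via the Hadamard-product criterion of Lemma \ref{hard}, whereas you verify the pairwise inner products directly from (\ref{inner}); these are the same computation.
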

\begin{proof}
Suppose $\mathcal{A}=[\![ \mat{V}^{(1)},\cdots,\mat{V}^{(N)}]\!]$ is an orthogonal decomposition.
Then $(\mat{M}_1,\cdots,\mat{M}_N)\cdot \mathcal{A}=
[\![ \mat{M}_1\mat{V}^{(1)},\cdots,\mat{M}_N\mat{V}^{(N)}]\!]$ and
$(\mat{M}_1\mat{V}^{(1)})^T\mat{M}_1\mat{V}^{(1)}\circledast \cdots \circledast
(\mat{M}_N\mat{V}^{(N)})^T\mat{M}_N\mat{V}^{(N)}=\mat{V}^{(1)^T}\mat{V}^{(1)}\circledast
\cdots \circledast\mat{V}^{(N)^T}\mat{V}^{(N)}$ is diagonal. Hence,
$\rank_{\bot}((\mat{M}_1,\cdots,\mat{M}_N)\cdot \mathcal{A}) \leq \rank_{\bot}(\mathcal{A})$.

On the other hand, we have
$$
\mathcal{A}=(\mat{M}^T_1,\cdots,\mat{M}^T_N)\cdot[(\mat{M}_1,\cdots,\mat{M}_N)\cdot \mathcal{A})]
$$
and hence $\rank_{\bot}(\mathcal{A})\leq \rank_{\bot}((\mat{M}_1,\cdots,\mat{M}_N)\cdot \mathcal{A})$.
Combining these two parts yields the result.
\end{proof}

In \cite[(2.8)]{li2018orthogonal}, an upper bound of $\rank_{\bot}(\mathcal{A})$ is given as
$$
\rank_{\bot}(\mathcal{A}) \leq \min_{m=1,\ldots,N}\prod_{n\neq m}I_n.
$$
We refine this result in the following proposition.

\begin{proposition}
Let $\mathcal{A}\in \R^{I_1\times \cdots \times I_N}$. Then
$$
\rank_{\bot}(\mathcal{A}) \leq \min_{m=1,\ldots,N}\prod_{n\neq m}\rank_n(\mathcal{A}),
$$
where $\rank_{n}(\mathcal{A})$ is the $n$-rank of $\mathcal{A}$.
\end{proposition}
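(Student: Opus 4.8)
The plan is to reduce $\mathcal{A}$ to a small core tensor by an orthonormal change of basis in every mode, decompose the core by a standard-basis trick, and then push the decomposition back up through the change of basis. First I would perform the (compact) higher-order SVD: for each mode $n$ choose $\mat{U}_n\in\R^{I_n\times R_n}$ whose columns form an orthonormal basis of the column space of the mode-$n$ unfolding of $\mathcal{A}$, where $R_n:=\rank_n(\mathcal{A})$. Since every mode-$n$ fiber of $\mathcal{A}$ lies in that column space, the projector $\mat{U}_n\mat{U}_n^T$ fixes it, so $(\mat{U}_n\mat{U}_n^T)\cdot_n\mathcal{A}=\mathcal{A}$ for each $n$, and composing over all modes gives
$$
\mathcal{A} = (\mat{U}_1,\cdots,\mat{U}_N)\cdot \mathcal{S}, \qquad \mathcal{S} := (\mat{U}_1^T,\cdots,\mat{U}_N^T)\cdot \mathcal{A} \in \R^{R_1\times \cdots \times R_N},
$$
with $\mat{U}_n^T\mat{U}_n=\mat{I}_{R_n}$ for every $n$.

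Next I would bound $\rank_{\bot}(\mathcal{S})$ directly. Fix a mode $m$; by relabeling I may assume $m=N$. Grouping the entries of $\mathcal{S}$ according to their mode-$N$ fiber gives
$$
\mathcal{S} = \sum_{j_1=1}^{R_1}\cdots\sum_{j_{N-1}=1}^{R_{N-1}} \mat{e}_{j_1}\otimes\cdots\otimes\mat{e}_{j_{N-1}}\otimes \mat{s}_{j_1\cdots j_{N-1}},
$$
where $\mat{s}_{j_1\cdots j_{N-1}}\in\R^{R_N}$ is the fiber at $(j_1,\dots,j_{N-1})$ and the $\mat{e}_{j_n}$ are standard basis vectors. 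This is a sum of at most $\prod_{n\neq N}R_n$ rank-one terms. Any two distinct terms differ in some index $j_n$ with $n\neq N$, where $\langle \mat{e}_{j_n},\mat{e}_{j_n'}\rangle=0$; by (\ref{inner}) the corresponding rank-one tensors are orthogonal. Hence $\rank_{\bot}(\mathcal{S})\leq \prod_{n\neq N}R_n$.

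The remaining step is to transport this orthogonal decomposition of $\mathcal{S}$ back to $\mathcal{A}$ through $(\mat{U}_1,\cdots,\mat{U}_N)\cdot$, and I expect this to be the crux. The subtlety is that the $\mat{U}_n$ are rectangular, so Lemma \ref{orun}, which is stated for square orthogonal matrices, does not apply verbatim. What rescues the argument is precisely the orthonormal-column identity $\mat{U}_n^T\mat{U}_n=\mat{I}_{R_n}$: for any factor vectors in the core space, (\ref{inner}) gives
$$
\Big\langle \textstyle\bigotimes_n \mat{U}_n\mat{w}^{(n)}, \bigotimes_n \mat{U}_n\mat{x}^{(n)}\Big\rangle = \prod_n \langle \mat{w}^{(n)}, \mat{U}_n^T\mat{U}_n\mat{x}^{(n)}\rangle = \prod_n \langle \mat{w}^{(n)}, \mat{x}^{(n)}\rangle,
$$
so inner products among the factor vectors, and hence among the rank-one terms, are unchanged. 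Applying $(\mat{U}_1,\cdots,\mat{U}_N)\cdot$ therefore carries the orthogonal rank-one decomposition of $\mathcal{S}$ to an orthogonal rank-one decomposition of $\mathcal{A}$ with the same number of terms. Consequently $\rank_{\bot}(\mathcal{A})\leq \rank_{\bot}(\mathcal{S})\leq \prod_{n\neq N}R_n=\prod_{n\neq N}\rank_n(\mathcal{A})$, and since $m=N$ was an arbitrary fixed mode, taking the minimum over $m$ yields the claimed bound.
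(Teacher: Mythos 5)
Your proof is correct and takes essentially the same route as the paper: both reduce $\mathcal{A}$ to an HOSVD core $\mathcal{S}$ and then decompose $\mathcal{S}$ as a sum of its mode-$m$ fibers tensored with standard basis vectors, which is automatically an orthogonal decomposition with at most $\prod_{n\neq m}\rank_n(\mathcal{A})$ nonzero terms. The only divergence is in transporting the decomposition: the paper keeps the factor matrices $\mat{U}_n$ square orthogonal and cites Lemma \ref{orun} to get $\rank_{\bot}(\mathcal{A})=\rank_{\bot}(\mathcal{S})$, whereas you use the compact HOSVD with rectangular $\mat{U}_n$ and, having correctly noted that Lemma \ref{orun} does not apply verbatim, substitute a direct check via (\ref{inner}) that orthonormal-column embeddings preserve inner products of rank-one terms --- a self-contained one-sided inequality that suffices for the bound.
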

\begin{proof}
Suppose $\mathcal{A}$ has the following HOSVD \cite{de2000multilinear}:
$$
\mathcal{A} =(\mat{U}_1,\cdots,\mat{U}_N)\cdot \mathcal{S},
$$
where $\mat{U}_n \in \R^{I_n\times I_n}$ is orthogonal and $s_{i_1i_2\cdots i_N}=0$
if there exists a least one $i_n>\rank_n(\mathcal{A})$ for $n=1,\ldots,N$.
It follows from Lemma \ref{orun} that $\rank_{\bot}(\mathcal{A})=\rank_{\bot}(\mathcal{S})$.
Note that
$$
\mathcal{S}=\sum_{i_k,k\neq m} \mat{e}_{i_1}\otimes \cdots \otimes
\mat{e}_{i_{m-1}}\otimes \mathcal{S}(i_1,\ldots,i_{m-1},:,i_{m+1},\ldots,i_N) \otimes
\mat{e}_{i_{m+1}}\otimes \cdots \otimes \mat{e}_{i_N},
$$
where $\mat{e}_{i_k}\in \R^{I_k}$ is the standard basis vector and
$\mathcal{S}(i_1,\ldots,i_{m-1},:,i_{m+1},\ldots,i_N)$ is a mode-$m$ fiber. We can check that this
is an orthogonal decomposition. Hence $\rank_{\bot}(\mathcal{S})$ is less than the
number of all non-zero mode-$m$ fibers, which is at most $\prod_{n\neq m}\rank_n(\mathcal{A})$.
\end{proof}

In contrast to Proposition \ref{notlow}, we have the following proposition for orthogonal rank.

\begin{proposition}\label{close}
For any $R>0$, the set $\{\mathcal{A}\in \R^{I_1\times \cdots \times I_N}:
\rank_{\bot}(\mathcal{A})\leq R\}$ is closed in the normed space $\R^{I_1\times \cdots \times I_N}$.
That is, the function $\rank_{\bot}(\mathcal{A})$ is lower semicontinuous.
\end{proposition}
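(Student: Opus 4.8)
The plan is to prove the set is closed by a compactness argument that exploits the Pythagorean identity forced by orthogonality. This identity is precisely the structural feature that ordinary tensor rank lacks, and whose absence causes Proposition \ref{notlow} to fail for $\rank$; recovering it is what makes orthogonal rank behave well.

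First I would take an arbitrary sequence $\{\mathcal{A}_k\}$ in the sublevel set with $\mathcal{A}_k \to \mathcal{A}$, and fix for each $k$ an orthogonal decomposition $\mathcal{A}_k = \sum_{r=1}^{R} \mathcal{T}_r^{(k)}$, where each $\mathcal{T}_r^{(k)}$ is rank-one and $\mathcal{T}_s^{(k)} \bot \mathcal{T}_t^{(k)}$ for $s \neq t$. Since $\rank_{\bot}(\mathcal{A}_k) \leq R$, I may pad with zero tensors (which are trivially orthogonal to every other term) so that exactly $R$ summands appear. I then write $\mathcal{T}_r^{(k)} = \lambda_r^{(k)} \mathcal{U}_r^{(k)}$ with $\lambda_r^{(k)} \geq 0$ and $\mathcal{U}_r^{(k)}$ a unit rank-one tensor.

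The heart of the argument is the observation that orthogonality gives
$$
\|\mathcal{A}_k\|^2 = \sum_{r=1}^{R} \|\mathcal{T}_r^{(k)}\|^2 = \sum_{r=1}^{R} \left(\lambda_r^{(k)}\right)^2,
$$
so that $0 \leq \lambda_r^{(k)} \leq \|\mathcal{A}_k\|$ for every $r$ and $k$. Because $\mathcal{A}_k \to \mathcal{A}$, the norms $\|\mathcal{A}_k\|$ are bounded, hence all coefficients $\lambda_r^{(k)}$ lie in a fixed compact interval. Moreover, the set of unit rank-one tensors is compact, being the continuous image of the product of unit spheres $S^{I_1-1} \times \cdots \times S^{I_N-1}$ under the tensor product map $(\mat{u}^{(1)},\ldots,\mat{u}^{(N)}) \mapsto \mat{u}^{(1)} \otimes \cdots \otimes \mat{u}^{(N)}$.

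I would then pass to a subsequence along which $\lambda_r^{(k)} \to \lambda_r$ and $\mathcal{U}_r^{(k)} \to \mathcal{U}_r$ for every $r$, so that $\mathcal{T}_r^{(k)} \to \mathcal{T}_r := \lambda_r \mathcal{U}_r$, which is rank-one or zero. Continuity of the inner product preserves orthogonality in the limit, yielding $\langle \mathcal{T}_s, \mathcal{T}_t \rangle = 0$ for $s \neq t$, and passing to the limit in the decomposition gives $\mathcal{A} = \sum_{r=1}^{R} \mathcal{T}_r$. Discarding the zero summands leaves an orthogonal decomposition of $\mathcal{A}$ with at most $R$ terms, so $\rank_{\bot}(\mathcal{A}) \leq R$ and the set is closed. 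The only point requiring care, rather than genuine difficulty, is the compactness of the set of unit rank-one tensors; the essential step is the coefficient bound, which is immediate from orthogonality and is exactly what cannot be guaranteed in the non-orthogonal setting.
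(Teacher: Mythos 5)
Your proof is correct and takes essentially the same route as the paper's: normalize each rank-one term, use the Pythagorean identity $\sum_r (\lambda_r^{(k)})^2 = \|\mathcal{A}_k\|^2$ forced by orthogonality to bound the coefficients, extract a convergent subsequence by compactness, and pass orthogonality to the limit via continuity of the inner product. The only cosmetic difference is that you invoke compactness of the set of unit rank-one tensors as the image of $S^{I_1-1}\times\cdots\times S^{I_N-1}$, while the paper extracts convergent factor vectors $\mat{u}^{(n)}_{r,m_k}\to\mat{u}^{(n)}_r$ componentwise---the same argument.
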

\begin{proof}
Suppose $\mathcal{A}_m \rightarrow \mathcal{A}$, where $\rank_{\bot}(\mathcal{A}_m ) \leq R$. Then
we can write
$$
\mathcal{A}_m = \sum_{r=1}^{R} \sigma_{r,m} \mathcal{U}_{r,m} \quad \text{with} \quad
\mathcal{U}_{r,m}= \mat{u}^{(1)}_{r,m}\otimes \cdots \otimes
\mat{u}^{(N)}_{r,m},
$$
where $\left\langle \mathcal{U}_{s,m}, \mathcal{U}_{t,m}\right\rangle=0$ for all $s\neq t$ and
$\|\mat{u}^{(n)}_{r,m}\|=1$ for all $n=1,\ldots,N$ and $r=1,\ldots,R$. Then
$$
\sum_{r=1}^{R} \sigma_{r,m}^2 = \|\mathcal{A}_m\|^2.
$$
Since $\|\mathcal{A}_m\| \rightarrow \|\mathcal{A}\|$, $\sigma_{r,m}$ are uniformly bounded. Thus
we can find a subsequence with convergence $\sigma_{r,m_k}\rightarrow \sigma_{r},
\mat{u}^{(n)}_{r,m_k}\rightarrow \mat{u}^{(n)}_{r}$ for all $r$ and $n$. Moreover,
$\lim_{m_k\rightarrow \infty}\left\langle \mathcal{U}_{s,m_k}, \mathcal{U}_{t,m_k}\right\rangle=
\left\langle \mathcal{U}_{s}, \mathcal{U}_{t}\right\rangle = 0$ for all $s\neq t$.
Then
$$
\mathcal{A} = \sum_{r=1}^{R} \sigma_{r}\ \mat{u}^{(1)}_{r}\otimes \cdots \otimes
\mat{u}^{(N)}_{r},
$$
satisfying $\rank_{\bot}(\mathcal{A})\leq R$.
\end{proof}

\section{Algorithms for low orthogonal rank approximation}\label{sec4}

Given $R>0$, finding the \emph{best orthogonal rank-$R$ approximation} of $\mathcal{A}$
is
\begin{equation}\label{ocp}
\min_{\rank_{\bot}(\mathcal{B})\leq R} \left\|\mathcal{A} - \mathcal{B} \right\|.
\end{equation}
By Proposition \ref{close}, we know that the solution of (\ref{ocp}) always exists.
Problem (\ref{ocp}) can be formulated as
\begin{equation}\label{orank}
\begin{aligned}
\min_{\mat{v} \in \R^{P}} \quad & \mathscr{F}(\mat{v}):=\frac{1}{2}\left\|\mathcal{A} -
\sum_{r=1}^{R}\otimes_{n=1}^N \mat{v}_r^{(n)}\right\|^2 \\
\st \quad &  \prod_{n=1}^{N}\left\langle \mat{v}_s^{(n)},\mat{v}_t^{(n)}
\right\rangle=0 \quad  \text{ for all } s\neq t,
\end{aligned}
\end{equation}
where $\mat{v}:=\left[\mat{v}_{1}^{(1)^T}\cdots \mat{v}_{R}^{(1)^T}\cdots \mat{v}_{1}^{(N)^T}
\cdots \mat{v}_{R}^{(N)^T} \right]^T$ and $P=R\sum_{n=1}^{N}I_n$.

We employ the augmented Lagrangian method to solve (\ref{orank}).
The augmented Lagrangian function is
\begin{equation}\label{penalty}
\begin{split}
\mathscr{L}(\mat{v},\bm{\lambda};\mat{c}) := & \mathscr{F}(\mat{v}) + \frac{1}{2} \sum_{s=1}^{R}
\sum_{t=1,t\neq s}^{R}\lambda_{st} \prod_{n=1}^{N}\left\langle \mat{v}_s^{(n)},
\mat{v}_t^{(n)}\right\rangle \\
     &  + \frac{1}{4} \sum_{s=1}^{R}
\sum_{t=1,t\neq s}^{R}c_{st} \prod_{n=1}^{N}\left\langle \mat{v}_s^{(n)},
\mat{v}_t^{(n)}\right\rangle^2,
\end{split}
\end{equation}
where $\lambda_{st}=\lambda_{ts}$ are Lagrange multipliers, $c_{st}=c_{ts}>0$ are penalty parameters
and $\bm{\lambda}=\{\lambda_{st}\},\mat{c}=\{c_{st}\}$. Following \cite[p. 124]{bertsekas2014constrained}
and \cite[Chapter 10.4]{sun2010optimization}, we use a different penalty parameter for each constraint,
which will be specified later.

For each iteration of the augmented Lagrangian method, we need to solve the following problem
\begin{equation}\label{alm}
\min_{\mat{v} \in \R^{P}} \mathscr{L}(\mat{v},\bm{\lambda};\mat{c})
\end{equation}
with $\bm{\lambda},\mat{c}$ given. If $\bm{\lambda}=\{0\},\mat{c}=\{0\}$, (\ref{alm}) is
just (\ref{cp}). Since (\ref{cp}) has no solution in general, the first issue that we need to
make sure is whether (\ref{alm}) has a solution. We have the following proposition.

\begin{proposition}
If $c_{st}>0$ for all $s\neq t$, then (\ref{alm}) always has a solution.
\end{proposition}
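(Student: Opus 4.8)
The plan is to recast the unconstrained problem (\ref{alm}) as a minimization over tuples of rank-one tensors, and then to invoke the Weierstrass theorem, the real work being a coercivity estimate. The key observation is that $\mathscr{L}(\mat{v},\bm{\lambda};\mat{c})$ depends on $\mat{v}$ only through the rank-one tensors $\mathcal{U}_r:=\otimes_{n=1}^N\mat{v}_r^{(n)}$. Setting
\[
G(\mathcal{U}_1,\dots,\mathcal{U}_R):=\frac12\Bigl\|\mathcal{A}-\sum_{r=1}^R\mathcal{U}_r\Bigr\|^2+\frac12\sum_{s\neq t}\lambda_{st}\langle\mathcal{U}_s,\mathcal{U}_t\rangle+\frac14\sum_{s\neq t}c_{st}\langle\mathcal{U}_s,\mathcal{U}_t\rangle^2,
\]
one has $\mathscr{L}(\mat{v},\bm{\lambda};\mat{c})=G(\mathcal{U}_1,\dots,\mathcal{U}_R)$, and as $\mat{v}$ ranges over $\R^P$ the tuple $(\mathcal{U}_1,\dots,\mathcal{U}_R)$ ranges over $\mathcal{S}^R$, where $\mathcal{S}:=\{\mathcal{U}:\rank(\mathcal{U})\leq1\}$ is closed (a limit of rank-one tensors has rank at most one) and contains $\mat{0}$. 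This reformulation is what makes the argument go through: it quotients out the scaling redundancy $\mat{v}_r^{(n)}\mapsto\alpha\mat{v}_r^{(n)}$, $\mat{v}_r^{(m)}\mapsto\alpha^{-1}\mat{v}_r^{(m)}$ under which $\mathscr{L}$ is invariant, and which is precisely what prevents $\mathscr{L}$ from being coercive in the raw variable $\mat{v}$.

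First I would confirm that $\inf G$ is finite. Writing $x_{st}:=\langle\mathcal{U}_s,\mathcal{U}_t\rangle$ and completing the square on each pair, $\frac12\lambda_{st}x_{st}+\frac14 c_{st}x_{st}^2\geq-\lambda_{st}^2/(4c_{st})$ since $c_{st}>0$; summing over pairs gives
\[
G(\mathcal{U}_1,\dots,\mathcal{U}_R)\ \geq\ \frac12\Bigl\|\mathcal{A}-\sum_{r=1}^R\mathcal{U}_r\Bigr\|^2-C_0,\qquad C_0:=\sum_{s\neq t}\frac{\lambda_{st}^2}{4c_{st}},
\]
so $G$ is bounded below on $\mathcal{S}^R$. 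This is the first place the strict positivity of every $c_{st}$ is used.

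The hard part will be coercivity, namely $G(\mathcal{U}_1,\dots,\mathcal{U}_R)\to\infty$ as $\max_r\|\mathcal{U}_r\|\to\infty$, and I would establish it by rescaling. Suppose $M_k:=\max_r\|\mathcal{U}_r^k\|\to\infty$ along a sequence in $\mathcal{S}^R$; put $\mathcal{W}_r^k:=\mathcal{U}_r^k/M_k$, so $\|\mathcal{W}_r^k\|\leq1$. For each $k$ some index attains the maximum; passing to a subsequence on which this index is a fixed $r_0$, and to a further subsequence so that $\mathcal{W}_r^k\to\mathcal{W}_r^*\in\mathcal{S}$ for every $r$, I get $\|\mathcal{W}_{r_0}^*\|=1$, hence $\max_r\|\mathcal{W}_r^*\|=1$. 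If $\sum_r\mathcal{W}_r^*\neq\mat{0}$, then $\|\sum_r\mathcal{U}_r^k\|\sim M_k\|\sum_r\mathcal{W}_r^*\|\to\infty$, so the first term of $G$ blows up and, by the lower bound above, $G\to\infty$. If instead $\sum_r\mathcal{W}_r^*=\mat{0}$, the cross inner products cannot all vanish, since $0=\|\sum_r\mathcal{W}_r^*\|^2=\sum_r\|\mathcal{W}_r^*\|^2+\sum_{s\neq t}\langle\mathcal{W}_s^*,\mathcal{W}_t^*\rangle$ would otherwise force $\sum_r\|\mathcal{W}_r^*\|^2=0$, contradicting $\max_r\|\mathcal{W}_r^*\|=1$. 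So $\langle\mathcal{W}_{s_0}^*,\mathcal{W}_{t_0}^*\rangle\neq0$ for some pair, whence $|x_{s_0t_0}^k|=M_k^2\,|\langle\mathcal{W}_{s_0}^k,\mathcal{W}_{t_0}^k\rangle|$ grows like $M_k^2$; the corresponding quartic term $\frac14 c_{s_0t_0}(x_{s_0t_0}^k)^2\sim M_k^4$ dominates the $O(M_k^2)$ Lagrange contribution while every other pair is bounded below by the same completing-the-square estimate, so again $G\to\infty$. Since every subsequence admits a further subsequence along which $G\to\infty$, coercivity follows.

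Finally I would conclude: $G$ is continuous on the closed set $\mathcal{S}^R$, bounded below, and coercive, so a minimizing sequence stays in a compact sublevel set and the infimum is attained at some $(\mathcal{U}_1^*,\dots,\mathcal{U}_R^*)\in\mathcal{S}^R$. Each $\mathcal{U}_r^*$ has rank at most one, hence can be written as $\otimes_{n=1}^N\mat{v}_r^{(n)*}$, and the resulting $\mat{v}^*$ minimizes $\mathscr{L}(\cdot,\bm{\lambda};\mat{c})$ over $\R^P$, so (\ref{alm}) has a solution. The one subtlety that must be handled with care is the cancellation case $\sum_r\mathcal{W}_r^*=\mat{0}$, in which $\mathscr{F}$ may remain bounded; the observation that orthogonality of the limiting directions would force them all to be trivial, combined with the quartic penalty, is exactly what restores coercivity there, and it is the reason the penalty parameters $c_{st}$ must be strictly positive.
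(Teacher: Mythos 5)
Your proof is correct, and it reaches the result by a genuinely different route than the paper, even though both share the same skeleton: complete the square in the penalty, neutralize the scaling invariance (\ref{rearrange}), and apply a Weierstrass-type argument. The paper never leaves $\R^P$: it restricts to the closed balanced section $W=\{\mat{v}:\|\mat{v}_r^{(m)}\|=\|\mat{v}_r^{(n)}\|\ \forall m,n,r\}$ and proves boundedness of sublevel sets by explicit estimates --- the data-fit term bounds $\|\mathcal{A}-\sum_r\mathcal{T}_r\|$, Cauchy--Schwarz applied to the squared penalty bounds $\sum_{s\neq t}|\langle\mathcal{T}_s,\mathcal{T}_t\rangle|$, and the inequality $\|\sum_r\mathcal{T}_r\|^2\ge\sum_r\|\mathcal{T}_r\|^2-\sum_{s\neq t}|\langle\mathcal{T}_s,\mathcal{T}_t\rangle|$ then bounds each $\|\mat{v}_r^{(n)}\|$ --- before citing a standard existence theorem. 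You instead quotient to tuples of rank-one tensors, lean on closedness of $\{\rank\le 1\}$, and obtain coercivity by a qualitative blow-up dichotomy; your cancellation case $\sum_r\mathcal{W}_r^*=\mat{0}$ is exactly the scenario the paper rules out quantitatively, since on a sublevel set the penalty keeps every $|\langle\mathcal{T}_s,\mathcal{T}_t\rangle|$ small, whereas cancellation of huge rank-one terms would force some of them to be large. Your route is cleaner and more robust: there are no constants to track, the argument generalizes to any penalty growing superlinearly in the cross inner products, and it makes explicit that existence hinges on closedness of the rank-$\le 1$ set --- precisely the property whose failure for rank $\geq 2$ (Proposition \ref{notlow}) makes (\ref{cp}) ill-posed. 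The paper's route, in exchange, yields explicit sublevel-set bounds (potentially useful for certifying boundedness of iterates) and stays entirely in the variable space $\R^P$ in which OD-ALM actually operates.
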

\begin{proof}
For convenience, define $\mathscr{E}(\mat{v})=\mathscr{L}(\mat{v},\bm{\lambda};\mat{c})$.
Denote $\mathcal{T}_r = \otimes_{n=1}^N \mat{v}_r^{(n)}$. Then
$$
\mathscr{E}(\mat{v}) = \frac{1}{2}\left\|\mathcal{A} - \sum_{r=1}^{R}\mathcal{T}_r\right\|^2 + \frac{1}{4}
\sum_{s=1}^{R}\sum_{t=1,t\neq s}^{R}c_{st} \left(\left\langle \mathcal{T}_s,\mathcal{T}_t \right\rangle
+\frac{\lambda_{st}}{c_{st}}\right)^2 -
\frac{1}{4}\sum_{s=1}^{R}\sum_{t=1,t\neq s}^{R}\frac{\lambda_{st}^2}{c_{st}}.
$$
Note that
\begin{equation}\label{rearrange}
\otimes_{n=1}^N \mat{v}_r^{(n)} = \otimes_{n=1}^N b_n\mat{v}_r^{(n)}
\quad \text{ when } \prod_{n=1}^N b_n = 1.
\end{equation}
We can scale each $\mat{v}_r^{(n)}$ such that $\|\mat{v}_r^{(n)}\|= \|\mathcal{T}_r\|^{1/N},
n=1,\ldots,N$. Define the following set
$$
W = \{\mat{v} \in \R^{P}: \|\mat{v}_r^{(m)}\| = \|\mat{v}_r^{(n)}\|, 1\leq m, n \leq N,
1\leq r \leq R\}.
$$
The continuity of $\|\cdot\|$ implies that $W$ is closed. We have
$$
\{\mathscr{E}(\mat{v}): \mat{v} \in \R^{P}\} = \{\mathscr{E}(\mat{v}): \mat{v} \in W\}.
$$
Hence, it suffices to show that (\ref{alm}) has a solution on $W$.

Denote $\alpha=\frac{1}{4}\sum_{s=1}^{R}\sum_{t=1,t\neq s}^{R}\frac{\lambda_{st}^2}{c_{st}},
\beta =\min\{c_{st}\},\gamma=\sum_{s=1}^{R}\sum_{t=1,t\neq s}^{R}\frac{|\lambda_{st}|}{c_{st}}$.
For any $\xi \geq \inf \mathscr{E} \geq 0$, if $\mathscr{E} \leq \xi$, then
$\left\|\mathcal{A} - \sum_{r=1}^{R}\mathcal{T}_r\right\| \leq \sqrt{2(\xi+\alpha)}$ and
\begin{align*}
& \sum_{s=1}^{R}\sum_{t=1,t\neq s}^{R}
\left|\left\langle \mathcal{T}_s,\mathcal{T}_t \right\rangle \right| - \gamma
\leq \sum_{s=1}^{R}\sum_{t=1,t\neq s}^{R}\left|\left\langle \mathcal{T}_s,\mathcal{T}_t \right\rangle
+\frac{\lambda_{st}}{c_{st}}\right| \\
\leq  & \sqrt{R(R-1)\sum_{s=1}^{R}\sum_{t=1,t\neq s}^{R}
\left(\left\langle \mathcal{T}_s,\mathcal{T}_t \right\rangle
+\frac{\lambda_{st}}{c_{st}}\right)^2 }
\leq  \sqrt{ \frac{4R(R-1)(\xi+\alpha)}{\beta} } \\
\Longrightarrow \quad & \sum_{s=1}^{R}\sum_{t=1,t\neq s}^{R}
\left|\left\langle \mathcal{T}_s,\mathcal{T}_t \right\rangle \right|
\leq \gamma + \sqrt{ \frac{4R(R-1)(\xi+\alpha)}{\beta} }.
\end{align*}
Hence $\|\sum_{r=1}^{R}\mathcal{T}_r\|\leq \|\mathcal{A} -
\sum_{r=1}^{R}\mathcal{T}_r\| + \|\mathcal{A}\| \leq \sqrt{2(\xi+\alpha)} + \|\mathcal{A}\|$.
For any $\mat{v}\in W$, it follows that
\begin{align*}
& (\sqrt{2(\xi+\alpha)} + \|\mathcal{A}\|)^2 \geq \left\|\sum_{r=1}^{R}\mathcal{T}_r \right\|^2
= \sum_{r=1}^{R}\|\mathcal{T}_r\|^2 + \sum_{s=1}^{R}\sum_{t=1,t\neq s}^{R}
\left\langle \mathcal{T}_s,\mathcal{T}_t \right\rangle  \nonumber \\
\geq & \sum_{r=1}^{R}\|\mathcal{T}_r\|^2 - \sum_{s=1}^{R}\sum_{t=1,t\neq s}^{R}
\left|\left\langle \mathcal{T}_s,\mathcal{T}_t \right\rangle \right|\geq
\sum_{r=1}^{R}\|\mathcal{T}_r\|^2 - \sqrt{ \frac{4R(R-1)(\xi+\alpha)}{\beta} }-\gamma  \nonumber \\
\Longrightarrow \quad & \|\mat{v}_r^{n}\|^2= \|\mathcal{T}_r\|^{2/N} \leq \left((\sqrt{2(\xi+\alpha)}
+ \|\mathcal{A}\|)^2+\sqrt{ \frac{4R(R-1)(\xi+\alpha)}{\beta}}+\gamma\right)^{1/N}. \label{bound}
\end{align*}
That is, the level set $\{\mat{v}\in W: \mathscr{E}(\mat{v}) \leq \xi,\xi \geq \inf \mathscr{E} \}$
is bounded. Combining with the fact that $\mathscr{E}(\mat{v})$ is continuous and $W$ is closed,
it follows from \cite[Theorem 1.9]{rockafellar2009variational} that
$\mathscr{E}$ can attain its minimum on $W$.
\end{proof}

The gradient of the objective function with respect to $\mat{v}$ has a very good structure. The
calculation of the gradient of the first term of $\mathscr{L}$ can be found in
\cite[Theorem 4.1]{acar2011a}. Note that $c_{st}=c_{ts},\left\langle \mat{v}_s^{(n)},
\mat{v}_t^{(n)} \right\rangle = \left\langle \mat{v}_t^{(n)},\mat{v}_s^{(n)} \right\rangle$. Direct
calculation gives the following lemma.

\begin{lemma}
The partial derivatives of the objective function $\mathscr{L}$ in (\ref{penalty}) are given by
$$
\frac{\partial \mathscr{L}}{\partial \mat{v}_r^{(n)}} = -\mat{w}_r^{(n)} + \sum_{s=1}^{R}\gamma_{sr}^{(n)}
\mat{v}_s^{(n)} + \sum_{s=1,s\neq r}^R \left(\lambda_{sr}\gamma^{(n)}_{sr}+c_{sr}
\gamma_{sr}^{(n)^2}\left\langle \mat{v}_s^{(n)},
\mat{v}_r^{(n)}\right\rangle \right)\mat{v}_s^{(n)},
$$
where $\mat{w}_r^{(n)}= \left({\mat{v}_r^{(1)}}^T,\cdots,{\mat{v}_r^{(n-1)}}^T,\mat{I},
{\mat{v}_r^{(n+1)}}^T,\cdots,{\mat{v}_r^{(N)}}^T \right)\cdot \mathcal{A}$ and
$\gamma_{sr}^{(n)}=\prod_{m=1,m\neq n}^{N}\left\langle \mat{v}_s^{(m)},
\mat{v}_r^{(m)}\right\rangle$.
\end{lemma}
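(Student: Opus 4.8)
The plan is to differentiate the three summands of $\mathscr{L}$ in (\ref{penalty}) separately with respect to the block $\mat{v}_r^{(n)}$ and then add the results. For the first summand $\mathscr{F}$ I would quote \cite[Theorem 4.1]{acar2011a}: writing $\mathcal{T}_r=\otimes_{n=1}^N\mat{v}_r^{(n)}$ and expanding $\mathscr{F}=\frac12\|\mathcal{A}\|^2-\langle\mathcal{A},\sum_r\mathcal{T}_r\rangle+\frac12\|\sum_r\mathcal{T}_r\|^2$, the linear term yields $-\mat{w}_r^{(n)}$, since $\langle\mathcal{A},\mathcal{T}_r\rangle=\langle\mat{w}_r^{(n)},\mat{v}_r^{(n)}\rangle$ with $\mat{w}_r^{(n)}$ the mode-$n$ contraction of $\mathcal{A}$ against the remaining factors of $\mathcal{T}_r$. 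The quadratic term, rewritten via (\ref{inner}) as $\frac12\sum_{s,t}\prod_m\langle\mat{v}_s^{(m)},\mat{v}_t^{(m)}\rangle$, yields $\sum_{s=1}^R\gamma_{sr}^{(n)}\mat{v}_s^{(n)}$; here the diagonal index $s=t=r$ contributes the single missing term $\gamma_{rr}^{(n)}\mat{v}_r^{(n)}$, which is exactly why the resulting sum runs over all $s$ rather than only $s\neq r$. This already accounts for the first two terms of the claimed formula.

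For the Lagrangian and penalty summands the key observation is that both are built from the scalar quantities $p_{st}:=\prod_{m=1}^N\langle\mat{v}_s^{(m)},\mat{v}_t^{(m)}\rangle=\langle\mathcal{T}_s,\mathcal{T}_t\rangle$, and that $\gamma_{st}^{(n)}$ is precisely $p_{st}$ with its mode-$n$ factor deleted. Consequently $\partial p_{st}/\partial\mat{v}_r^{(n)}$ vanishes unless $r\in\{s,t\}$, in which case it equals $\gamma_{rt}^{(n)}\mat{v}_t^{(n)}$ or $\gamma_{sr}^{(n)}\mat{v}_s^{(n)}$. I would apply this directly to the Lagrangian term $\frac12\sum_{s\neq t}\lambda_{st}p_{st}$, and through the chain rule to the penalty term $\frac14\sum_{s\neq t}c_{st}p_{st}^2$, using the identity $p_{sr}=\gamma_{sr}^{(n)}\langle\mat{v}_s^{(n)},\mat{v}_r^{(n)}\rangle$ to convert the factor $p_{sr}\gamma_{sr}^{(n)}$ arising in the chain rule into $\gamma_{sr}^{(n)^2}\langle\mat{v}_s^{(n)},\mat{v}_r^{(n)}\rangle$.

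The one step that requires care, and which I expect to be the main bookkeeping obstacle, is that in each double sum over $s\neq t$ the block $\mat{v}_r^{(n)}$ occurs twice in any relevant pair: once as the first index and once as the second. Collecting the $s=r$ and $t=r$ contributions separately produces two sums, and I would then invoke the symmetries $\lambda_{st}=\lambda_{ts}$, $c_{st}=c_{ts}$ and $\gamma_{st}^{(n)}=\gamma_{ts}^{(n)}$ (the last because every inner product is symmetric) to show the two sums coincide; their merger cancels the prefactors $\frac12$ and $\frac14$ and leaves a single sum over $s\neq r$. Adding the Lagrangian and penalty contributions gives $\sum_{s\neq r}\bigl(\lambda_{sr}\gamma_{sr}^{(n)}+c_{sr}\gamma_{sr}^{(n)^2}\langle\mat{v}_s^{(n)},\mat{v}_r^{(n)}\rangle\bigr)\mat{v}_s^{(n)}$, which together with the gradient of $\mathscr{F}$ reproduces exactly the asserted expression for $\partial\mathscr{L}/\partial\mat{v}_r^{(n)}$.
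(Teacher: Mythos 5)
Your proposal is correct and follows essentially the same route as the paper: the paper likewise invokes \cite[Theorem 4.1]{acar2011a} for the gradient of $\mathscr{F}$ and then disposes of the multiplier and penalty terms by ``direct calculation'' using the symmetries $\lambda_{st}=\lambda_{ts}$, $c_{st}=c_{ts}$, and $\langle \mat{v}_s^{(n)},\mat{v}_t^{(n)}\rangle=\langle \mat{v}_t^{(n)},\mat{v}_s^{(n)}\rangle$. Your write-up simply makes explicit the bookkeeping the paper leaves implicit — in particular the diagonal term $\gamma_{rr}^{(n)}\mat{v}_r^{(n)}$ extending the sum to all $s$, the merging of the $s=r$ and $t=r$ contributions, and the identity $p_{sr}=\gamma_{sr}^{(n)}\langle\mat{v}_s^{(n)},\mat{v}_r^{(n)}\rangle$ — all of which check out.
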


With the relationship introduced in \cite[Section 2.6]{kolda2009tensor}, $\mat{w}_r^{(n)}$
can be rewritten as
$$
\mat{w}_r^{(n)}=\mat{A}_{(n)} \left(\mat{v}_r^{(N)}\circledcirc \cdots \circledcirc \mat{v}_r^{(n+1)}
\circledcirc \mat{v}_r^{(n-1)}\circledcirc \cdots \circledcirc \mat{v}_r^{(1)} \right),
$$
where $\mat{A}_{(n)}$ is the mode-$n$ unfolding of $\mathcal{A}$
and $``\circledcirc"$ is the Kronecker product. Denote
$$
\bm{\Gamma}^{(n)}=\mat{V}^{(1)^T}\mat{V}^{(1)}\circledast\cdots \circledast
\mat{V}^{(n-1)^T}\mat{V}^{(n-1)}\circledast \mat{V}^{(n+1)^T}\mat{V}^{(n+1)}
\circledast \cdots \circledast \mat{V}^{(N)^T}\mat{V}^{(N)},
$$
where $\mat{V}^{(n)}$ is defined in (\ref{factor}).
We can observe that $\gamma_{st}^{(n)}= \bm{\Gamma}^{(n)}(s,t)$. Define
matrices $\bm{\Lambda},\mat{C} \in \R^{R \times R}$ by
\begin{equation}\label{C}
\bm{\Lambda}(i,j) = \begin{cases} \lambda_{ij}, & \mbox{if } i\neq j \\ 0,
& \mbox{otherwise},\end{cases} \quad
\mat{C}(i,j) = \begin{cases} c_{ij}, & \mbox{if } i\neq j \\ 0, & \mbox{otherwise},\end{cases}
\end{equation}
and denote
$$
\mat{V}^{(-n)} = \mat{V}^{(N)}\odot \cdots \odot \mat{V}^{(n+1)}
\odot \mat{V}^{(n-1)}\odot \cdots \odot \mat{V}^{(1)},
$$
where ``$\odot$" is the Khatri-Rao product.
Then, we can rewrite the gradient in matrix form, as the following corollary shows.

\begin{corollary}\label{gra}
The partial derivatives of the objective function $\mathscr{L}$ in (\ref{penalty}) satisfy
$$
\left[\frac{\partial \mathscr{L}}{\partial \mat{v}_1^{(n)}} \cdots
\frac{\partial \mathscr{L}}{\partial \mat{v}_R^{(n)}}\right] = - \mat{A}_{(n)} \mat{V}^{(-n)}
+ \mat{V}^{(n)}\left(\bm{\Gamma}^{(n)}+\bm{\Gamma}^{(n)} \circledast
\bm{\Lambda} + \bm{\Gamma}^{(n)}\circledast \bm{\Gamma}^{(n)}\circledast \mat{V}^{(n)^T}
\mat{V}^{(n)} \circledast \mat{C}\right).
$$
\end{corollary}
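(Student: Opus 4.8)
The plan is to obtain this matrix identity directly from the per-component formula in the preceding lemma, by assembling the $R$ vectors $\partial\mathscr{L}/\partial\mat{v}_r^{(n)}$, $r=1,\ldots,R$, into the columns of a single matrix and recognizing each resulting sum over $s$ as a product of $\mat{V}^{(n)}$ with one column of a Hadamard-product matrix. The three elementary correspondences that translate the scalar data of the lemma into matrix entries are: $\mat{w}_r^{(n)}$ is the $r$-th column of $\mat{A}_{(n)}\mat{V}^{(-n)}$ (this is exactly the Kronecker/Khatri--Rao rewriting of $\mat{w}_r^{(n)}$ recorded just before the corollary), $\gamma_{sr}^{(n)}=\bm{\Gamma}^{(n)}(s,r)$, and $\left\langle\mat{v}_s^{(n)},\mat{v}_r^{(n)}\right\rangle=\big(\mat{V}^{(n)^T}\mat{V}^{(n)}\big)(s,r)$. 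With these in hand, stacking the first term $-\mat{w}_r^{(n)}$ over $r$ immediately yields $-\mat{A}_{(n)}\mat{V}^{(-n)}$.

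For the three remaining terms I would treat each sum $\sum_s(\cdots)\,\mat{v}_s^{(n)}$ as a matrix--vector product $\mat{V}^{(n)}\mat{x}_r$, reading off the coordinate $(\mat{x}_r)_s$ as the $(s,r)$ entry of the relevant Hadamard product. For $\sum_{s}\gamma_{sr}^{(n)}\mat{v}_s^{(n)}$ the vector is $\bm{\Gamma}^{(n)}(:,r)$; for $\sum_{s\neq r}\lambda_{sr}\gamma_{sr}^{(n)}\mat{v}_s^{(n)}$ it is $\big(\bm{\Gamma}^{(n)}\circledast\bm{\Lambda}\big)(:,r)$; and for $\sum_{s\neq r}c_{sr}\gamma_{sr}^{(n)^2}\left\langle\mat{v}_s^{(n)},\mat{v}_r^{(n)}\right\rangle\mat{v}_s^{(n)}$ it is $\big(\bm{\Gamma}^{(n)}\circledast\bm{\Gamma}^{(n)}\circledast\mat{V}^{(n)^T}\mat{V}^{(n)}\circledast\mat{C}\big)(:,r)$. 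Collecting the columns $\mat{V}^{(n)}\mat{x}_r$ into a matrix gives $\mat{V}^{(n)}[\mat{x}_1\cdots\mat{x}_R]$, i.e. $\mat{V}^{(n)}$ times the corresponding full Hadamard-product matrix, and adding the three contributions reproduces the parenthesized expression.

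The only point requiring genuine care, and the step I expect to be the main obstacle, is the treatment of the diagonal ($s=r$) indices. The second term sums over all $s$ including $s=r$, which is consistent with using the full symmetric matrix $\bm{\Gamma}^{(n)}$. By contrast, the third and fourth terms sum only over $s\neq r$; rewriting these as sums over all $s$ without altering their value relies precisely on the definitions in (\ref{C}), which set $\bm{\Lambda}(r,r)=0$ and $\mat{C}(r,r)=0$. Thus the zero diagonals of $\bm{\Lambda}$ and $\mat{C}$ annihilate the spurious $s=r$ contributions, so that $\sum_{s\neq r}=\sum_s$ in those two terms and column $r$ of each becomes a clean product of $\mat{V}^{(n)}$ with the $r$-th column of the relevant Hadamard matrix. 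Once this alignment is verified, summing the four stacked terms yields the stated identity.
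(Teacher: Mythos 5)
Your proposal is correct and is essentially the paper's own argument: the paper states the corollary as an immediate matrix rewriting of the preceding lemma, using exactly the correspondences you list ($\mat{w}_r^{(n)}$ as the $r$-th column of $\mat{A}_{(n)}\mat{V}^{(-n)}$, $\gamma_{sr}^{(n)}=\bm{\Gamma}^{(n)}(s,r)$, and the zero diagonals of $\bm{\Lambda}$ and $\mat{C}$ from (\ref{C}) absorbing the $s\neq r$ restrictions). Your explicit attention to the diagonal entries is precisely the detail that makes the column-stacking argument go through.
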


\subsection{Algorithm: OD-ALM}

Suppose we have obtained the solution $\mat{v}_{[k]}$ for the $k$th iteration. Now we introduce how to
solve $\mat{v}_{[k+1]}$ for the $(k+1)$st iteration.

We use $\mat{v}_{[k]}$ as the initialization of the $(k+1)$st iteration.
By (\ref{rearrange}), we scale the initialization such that $\|\mat{v}^{(m)}_{r,[k]}\|=
\left(\prod_{n=1}^{N}\|\mat{v}^{(n)}_{r,[k]}\|\right)^{1/N},m=1,\ldots,N$. This scaling can avoid
the situation that some $\|\mat{v}^{(n_1)}_{r,[k]}\|$ is too big and some
$\|\mat{v}^{(n_2)}_{r,[k]}\|$ is too small, where $1\leq n_1,n_2\leq N$.

Note that the solution of each iteration does not satisfy the constraint of (\ref{orank}) exactly.
The effect of the penalty terms of (\ref{penalty}) is just to make
$|\langle \mathcal{T}_s,\mathcal{T}_t \rangle |$ as small as possible, where
$\mathcal{T}_r = \otimes_{n=1}^N \mat{v}_r^{(n)}$ for all $r=1,\ldots,R$.
By (\ref{Tangle}), we have
$$
|\langle \mathcal{T}_s,\mathcal{T}_t \rangle | = \|\mathcal{T}_s\|\|\mathcal{T}_t\|
\left|\cos \angle(\mathcal{T}_s,\mathcal{T}_t)\right|.
$$
Hence, a small value of $|\langle \mathcal{T}_s,\mathcal{T}_t \rangle |$
cannot result in $\angle(\mathcal{T}_s,\mathcal{T}_t)$ being close to $\pi/2$ directly.
To avoid the influence of the norms $\|\mathcal{T}_r\|$, an ideal strategy is to
replace (\ref{penalty}) by the following function
\begin{align*}
\mathscr{L}'(\mat{v},\bm{\lambda};\mat{c})= & \mathscr{F}(\mat{v}) + \frac{1}{2} \sum_{s=1}^{R}
\sum_{t=1,t\neq s}^{R}\lambda_{st} \prod_{n=1}^{N}\left\langle \frac{\mat{v}_s^{(n)}}{\|\mat{v}_s^{(n)}\|},
\frac{\mat{v}_t^{(n)}}{\|\mat{v}_t^{(n)}\|} \right\rangle \\
     &  + \frac{\mu}{4} \sum_{s=1}^{R}
\sum_{t=1,t\neq s}^{R} \prod_{n=1}^{N}\left\langle \frac{\mat{v}_s^{(n)}}{\|\mat{v}_s^{(n)}\|},
\frac{\mat{v}_t^{(n)}}{\|\mat{v}_t^{(n)}\|} \right\rangle^2.
\end{align*}
However, this would make the subproblem rather difficult to solve. We can realize this idea by
setting different penalty parameters for (\ref{alm}):
\begin{equation}\label{c}
c_{st,[k]} =\frac{\mu_{[k]}}{\prod_{n=1}^N \|\mat{v}_{s,[k]}^{(n)}\|^2
\prod_{n=1}^N \|\mat{v}_{t,[k]}^{(n)}\|^2},
\end{equation}
where $\mu_{[k]}>0$. In the matrix form (\ref{C}), the non-diagonal entries of
$\mat{C}_{[k]}$ are the same as those of  $\mu_{[k]} \mat{h}_{[k]}^T\mat{h}_{[k]}$, where
$$\mat{h}_{[k]} = \begin{bmatrix}\frac{1}{\prod_{n=1}^N \|\mat{v}_{1,[k]}^{(n)}\|^2} &
\cdots & \frac{1}{\prod_{n=1}^N \|\mat{v}_{R,[k]}^{(n)}\|^2}   \end{bmatrix} \in \R^{1\times R}.
$$
Then $\mat{v}_{[k+1]}$ can be obtained by solving $\min_{\mat{v} \in \R^{P}}
\mathscr{L}(\mat{v},\bm{\lambda}_{[k]};\mat{c}_{[k]})$.

At last, the Lagrange multiplier $\lambda_{st,[k+1]}$ is updated by
$\lambda_{st,[k+1]}= \lambda_{st,[k]}+
c_{st,[k]}\prod_{n=1}^{N}\left\langle \mat{v}_{s,[k+1]}^{(n)},\mat{v}_{t,[k+1]}^{(n)}\right\rangle$,
whose matrix form is
\begin{equation}\label{updatelambda}
\bm{\Lambda}_{[k+1]} = \bm{\Lambda}_{[k]} + \mat{C}_{[k]}\circledast \left(
\circledast_{n=1}^N \mat{V}_{[k+1]}^{(n)^T}\mat{V}_{[k+1]}^{(n)} \right).
\end{equation}

Now we introduce how to develop a systematic scheme for the augmented Lagrangian method. The standard
procedure of the augmented Lagrangian method tells us that we need to increase the penalty parameters
gradually to a sufficiently large value. This procedure is rather important for (\ref{alm}),
because $\mathscr{L}$ is nonconvex. The later subproblems corresponding to larger penalty parameters
can be solved relatively efficiently by warm starting from the previous solutions. By (\ref{c}),
we need to set $\mu_{[k+1]}$ sufficiently large such that
\begin{equation}\label{increase}
c_{st,[k+1]} > c_{st,[k]}.
\end{equation}
Usually, we can avoid checking this condition by simply setting a sufficiently large gap between
$\mu_{[k+1]}$ and $\mu_{[k]}$. The whole procedure of the augmented Lagrangian method is presented in
Algorithm \ref{alg1}. Here we choose $\mu_{[k+1]}=10\mu_{[k]}$, which has a good performance
for the numerical examples. In practical applications, $\{\mu_{[k]}\}$ can be chosen flexibly and
adaptively.

\begin{algorithm}[!ht]
\caption{Orthogonal Decomposition by Augmented Lagrangian Method (OD-ALM)}
\label{alg1}
\small
\SetAlgoLined
\DontPrintSemicolon
\SetKwFunction{SVD}{SVD}
\SetKwFunction{Up}{Update-N}
\KwIn{Tensor $\mathcal{A}$, number of components $R$, initialization
$\mat{v}_{[0]}$; $\bm{\Lambda}_{[0]}=\bm{0},\mu_{[0]}=1$; $k=0$}
\KwOut{Approximate solution $\mat{v}_{[k]}$ of the orthogonal rank-$R$ approximation
to $\mathcal{A}$}
\Repeat{}{
\For{$r = 1, \dots, R$}{
$\delta_r \leftarrow \prod_{n=1}^{N}\|\mat{v}^{(n)}_{r,[k]}\|$\Comment*[r]{Compute the norm of
$\otimes_{n=1}^N \mat{v}_{r,[k]}^{(n)}$}
}
\For{$r = 1, \dots, R$}{
\For{$n = 1, \dots, N$}{
$\mat{v}^{(n)}_{r,[k]} \leftarrow \frac{\delta_r^{1/N}}{\|\mat{v}^{(n)}_{r,[k]}\|}\mat{v}^{(n)}_{r,[k]}$
\Comment*[r]{scale the initialization}
}}
$\mat{h} \leftarrow \begin{bmatrix}1/\delta_1^2 & \cdots & 1/\delta_R^2\end{bmatrix}$\;
$\mat{C}_{[k]} \leftarrow \mu\ \mat{h}^T\mat{h}$\;
$\mat{C}_{[k]}(i,i) \leftarrow 0 \quad \forall i =1,\ldots,R$\;
$\mat{v}_{[k+1]} \leftarrow \arg\min \mathscr{L}(\mat{v},\bm{\lambda}_{[k]};\mat{c}_{[k]})$
by gradient-based optimization methods with starting point $\mat{v}_{[k]}$, where the gradient is
computed by Corollary \ref{gra}\;
Update $\bm{\Lambda}_{[k+1]}$ by (\ref{updatelambda})\;
$\mu_{[k+1]} \leftarrow 10 \mu_{[k]}$\;
$k\leftarrow k+1$
}( termination criteria met)
\end{algorithm}

The convergence analysis of augmented Lagrangian methods can be found in many textbooks.
See \cite{nocedal2006numerical,bertsekas2014constrained,sun2010optimization} for reference.
Here we extend \cite[Theorem 10.4.2]{sun2010optimization}, which is useful for designing
the termination criteria.

\begin{proposition}
Suppose that (\ref{increase}) holds for Algorithm \ref{alg1}. Then we have
$$
\lim_{k\rightarrow \infty}
\prod_{n=1}^{N}\left\langle \frac{\mat{v}_{s,[k+1]}^{(n)}}{\|\mat{v}_{s,[k]}^{(n)}\|},
\frac{\mat{v}_{t,[k+1]}^{(n)}}{\|\mat{v}_{t,[k]}^{(n)}\|} \right\rangle
 =0 \quad \text{for all}\quad  1\leq s \neq t \leq R.
$$
\end{proposition}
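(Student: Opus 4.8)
The plan is to reduce the claimed limit to the growth rates of the Lagrange multipliers and of the parameter $\mu_{[k]}$, exploiting the special form (\ref{c}) of the penalty parameters. First I would establish an exact identity for the quantity inside the limit. Writing $h_{st}(\mat{v}) := \prod_{n=1}^N \langle \mat{v}_s^{(n)}, \mat{v}_t^{(n)}\rangle$, the multiplier rule (\ref{updatelambda}) reads $\lambda_{st,[k+1]} = \lambda_{st,[k]} + c_{st,[k]}\,h_{st}(\mat{v}_{[k+1]})$, while (\ref{c}) gives $\prod_{n}\|\mat{v}_{s,[k]}^{(n)}\|\prod_n\|\mat{v}_{t,[k]}^{(n)}\| = \sqrt{\mu_{[k]}/c_{st,[k]}}$. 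Substituting both into the factored inner product yields
\[
\prod_{n=1}^N \left\langle \frac{\mat{v}_{s,[k+1]}^{(n)}}{\|\mat{v}_{s,[k]}^{(n)}\|}, \frac{\mat{v}_{t,[k+1]}^{(n)}}{\|\mat{v}_{t,[k]}^{(n)}\|}\right\rangle = \frac{h_{st}(\mat{v}_{[k+1]})}{\sqrt{\mu_{[k]}/c_{st,[k]}}} = \frac{\lambda_{st,[k+1]} - \lambda_{st,[k]}}{\sqrt{c_{st,[k]}\,\mu_{[k]}}},
\]
so it suffices to show that the right-hand side tends to $0$ for every fixed pair $s\neq t$. (Throughout I assume the factors stay nonzero, so that (\ref{c}) and the normalizations are well defined.)

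Next I would bound the multipliers. Since $\mat{v}_{[k+1]}$ globally minimizes the subproblem (\ref{alm}) — the standing assumption of the theorem we extend — comparing against the trivial feasible point $\mat{v}=\mat{0}$, which satisfies all constraints and has $\mathscr{L}(\mat{0},\bm{\lambda}_{[k]};\mat{c}_{[k]}) = \tfrac{1}{2}\|\mathcal{A}\|^2$, gives $\mathscr{L}(\mat{v}_{[k+1]},\bm{\lambda}_{[k]};\mat{c}_{[k]}) \le \tfrac{1}{2}\|\mathcal{A}\|^2$. The elementary per-constraint identity $\tfrac{1}{2}\lambda h + \tfrac{1}{4} c h^2 = \tfrac{1}{4c}\big((\lambda + ch)^2 - \lambda^2\big)$, applied with $\lambda=\lambda_{st,[k]}$, $c=c_{st,[k]}$, $h=h_{st}(\mat{v}_{[k+1]})$ and summed over $s\neq t$, rewrites the augmented Lagrangian value in (\ref{penalty}) as $\mathscr{F}(\mat{v}_{[k+1]}) + \tfrac{1}{4}\sum_{s\neq t} \tfrac{1}{c_{st,[k]}}\big(\lambda_{st,[k+1]}^2 - \lambda_{st,[k]}^2\big)$. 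Dropping $\mathscr{F}(\mat{v}_{[k+1]})\ge 0$, I obtain
\[
\sum_{s\neq t} \frac{\lambda_{st,[k+1]}^2}{c_{st,[k]}} - \sum_{s\neq t}\frac{\lambda_{st,[k]}^2}{c_{st,[k]}} \le 2\|\mathcal{A}\|^2 .
\]

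Now condition (\ref{increase}) enters. Setting $T_k := \sum_{s\neq t}\lambda_{st,[k+1]}^2/c_{st,[k]}$ and using $c_{st,[k]} > c_{st,[k-1]}$ to bound $\sum_{s\neq t}\lambda_{st,[k]}^2/c_{st,[k]} \le T_{k-1}$, the last display telescopes to $T_k \le T_{k-1} + 2\|\mathcal{A}\|^2$, hence $T_k \le T_0 + 2k\|\mathcal{A}\|^2 =: M_k$. In particular $\lambda_{st,[k+1]}^2 \le c_{st,[k]} M_k$, and since both $c_{st,\cdot}$ and $M_\cdot$ are nondecreasing, $\lambda_{st,[k]}^2 \le c_{st,[k-1]}M_{k-1} \le c_{st,[k]}M_k$ as well. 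Therefore $(\lambda_{st,[k+1]} - \lambda_{st,[k]})^2 \le 2(\lambda_{st,[k+1]}^2 + \lambda_{st,[k]}^2) \le 4\,c_{st,[k]} M_k$, and plugging this into the identity from the first step gives
\[
\left(\prod_{n=1}^N \left\langle \frac{\mat{v}_{s,[k+1]}^{(n)}}{\|\mat{v}_{s,[k]}^{(n)}\|}, \frac{\mat{v}_{t,[k+1]}^{(n)}}{\|\mat{v}_{t,[k]}^{(n)}\|}\right\rangle\right)^2 \le \frac{4\,c_{st,[k]} M_k}{c_{st,[k]}\,\mu_{[k]}} = \frac{4\big(T_0 + 2k\|\mathcal{A}\|^2\big)}{\mu_{[k]}}.
\]

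Finally, because $\mu_{[k]}=10^k\mu_{[0]}$ grows geometrically while $M_k$ grows only linearly in $k$, the right-hand side tends to $0$, which yields the claim. The main obstacle is not any single estimate but getting the bookkeeping of the telescoping to line up: the penalty parameters differ across constraints and change with $k$, so the monotonicity (\ref{increase}) must be used precisely to compare $1/c_{st,[k]}$ with $1/c_{st,[k-1]}$ and to absorb the index shift between $T_k$ and $T_{k-1}$. A secondary point to handle carefully is the justification of $\mathscr{L}(\mat{v}_{[k+1]},\bm{\lambda}_{[k]};\mat{c}_{[k]}) \le \tfrac{1}{2}\|\mathcal{A}\|^2$: it relies on solving each subproblem to global optimality (or at least returning a point no worse than $\mat{v}=\mat{0}$), matching the hypothesis under which \cite[Theorem 10.4.2]{sun2010optimization} is stated.
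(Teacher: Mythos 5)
Your proof is correct, and its core estimate coincides with the paper's: both arguments combine global optimality of each subproblem solution (you compare against the feasible point $\mat{v}=\mat{0}$, the paper against an arbitrary feasible point $\bar{\mat{v}}$ of (\ref{orank})), the multiplier update (\ref{updatelambda}), and the same completing-the-square identity to show that the weighted sums $\sum_{s\neq t}\lambda_{st,[k]}^2/c_{st,[k]}$ grow at most linearly in $k$, with (\ref{increase}) used exactly as you describe to compare $1/c_{st,[k]}$ with $1/c_{st,[k-1]}$ in the telescoping. Where you genuinely depart from the paper is the endgame. The paper applies the optimality bound a \emph{second} time: it rewrites $\mathscr{L}(\mat{v}_{[k+1]},\bm{\lambda}_{[k]};\mat{c}_{[k]})$ in terms of the normalized products $p_{st,[k]}$ and the quantities $d_{st,[k]}/\mu_{[k]}=\lambda_{st,[k]}/\sqrt{c_{st,[k]}\mu_{[k]}}=o(1)$, and concludes from $\mathscr{F}(\bar{\mat{v}})\geq \tfrac{1}{4}\sum_{s\neq t}\mu_{[k]}\left[\left(p_{st,[k]}+o(1)\right)^2-o(1)\right]$ together with $\mu_{[k]}\to\infty$ that $p_{st,[k]}\to 0$. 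You instead exploit the exact identity $p_{st,[k]}=\left(\lambda_{st,[k+1]}-\lambda_{st,[k]}\right)/\sqrt{c_{st,[k]}\,\mu_{[k]}}$, bound the increment by $2\sqrt{c_{st,[k]}M_k}$ directly from the linear-growth estimate, and get $|p_{st,[k]}|\leq 2\sqrt{M_k/\mu_{[k]}}\to 0$ because $\mu_{[k]}=10^k$. This buys a single application of subproblem optimality instead of two, an explicit rate $O\!\left(\sqrt{k/10^k}\right)$, and a cleaner separation of where each hypothesis enters; the paper's route is the one that more closely mirrors the textbook argument of \cite[Theorem 10.4.2]{sun2010optimization} that it claims to extend. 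Your two cautionary remarks --- that the subproblems must be solved to global optimality (or at least to a value no worse than at $\mat{v}=\mat{0}$), and that the factor norms must stay nonzero for (\ref{c}) and the normalizations to make sense --- are precisely the implicit hypotheses of the paper's own proof, so they flag no defect in your argument relative to the original.
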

\begin{proof}
We have
\begin{align*}
& \sum_{s\neq t} \frac{\lambda^2_{st,[k+1]}}{c_{st,[k+1]}}
\leq \sum_{s\neq t} \frac{\lambda^2_{st,[k+1]}}{c_{st,[k]}}  \\
= & \sum_{s\neq t}\frac{ \left(\lambda_{st,[k]}+
c_{st,[k]}\prod_{n}\left\langle \mat{v}_{s,[k+1]}^{(n)},\mat{v}_{t,[k+1]}^{(n)}
\right\rangle \right)^2}{c_{st,[k]}} \\
=& \sum_{s\neq t} \frac{\lambda^2_{st,[k]}}{c_{st,[k]}}+
4\left(\mathscr{L}(\mat{v}_{[k+1]},\bm{\lambda}_{[k]};\mat{c}_{[k]})-
\mathscr{F}(\mat{v}_{[k+1]}) \right) \\
\leq & \sum_{s\neq t} \frac{\lambda^2_{st,[k]}}{c_{st,[k]}}+
4\mathscr{L}(\mat{v}_{[k+1]},\bm{\lambda}_{[k]};\mat{c}_{[k]}).
\end{align*}
For any feasible point $\bar{\mat{v}}$ of (\ref{orank}), noting that
$\mathscr{L}(\mat{v}_{[k+1]},\bm{\lambda}_{[k]};\mat{c}_{[k]})\leq
\mathscr{L}(\bar{\mat{v}},\bm{\lambda}_{[k]};\mat{c}_{[k]})=\mathscr{F}(\bar{\mat{v}})$, we have
\begin{align*}
& \sum_{s\neq t} \frac{\lambda^2_{st,[k+1]}}{c_{st,[k+1]}}\leq
\sum_{s\neq t} \frac{\lambda^2_{st,[k]}}{c_{st,[k]}}+
4\mathscr{L}(\mat{v}_{[k+1]},\bm{\lambda}_{[k]};\mat{c}_{[k]}) \\
\leq &  \sum_{s\neq t} \frac{\lambda^2_{st,[k]}}{c_{st,[k]}} + 4\mathscr{F}(\bar{\mat{v}}).
\end{align*}
This suggests that there exists $\delta>0$ such that
$\sum_{s\neq t} \frac{\lambda^2_{st,[k]}}{c_{st,[k]}}\leq \delta k$.
Denote by \\
$d_{st,[k]}:=\lambda_{st,[k]}\prod_{n} \|\mat{v}_{s,[k]}^{(n)}\|
\prod_{n} \|\mat{v}_{t,[k]}^{(n)}\|$. It follows from (\ref{c}) that
 $\sum_{s\neq t} \frac{d^2_{st,[k]}}{\mu_{[k]}}
= \sum_{s\neq t} \frac{\lambda^2_{st,[k]}}{c_{st,[k]}}\leq \delta k$.
By the algorithm, $\mu_{[k]}=10^k$. Hence, $\frac{d_{st,[k]}}{\mu_{[k]}}=o(1)$.

For any feasible point $\bar{\mat{v}}$ of (\ref{orank}), we have
\begin{align*}
& \mathscr{F}(\bar{\mat{v}})=\mathscr{L}(\bar{\mat{v}},\bm{\lambda}_{[k]};\mat{c}_{[k]})
\geq \mathscr{L}(\mat{v}_{[k+1]},\bm{\lambda}_{[k]};\mat{c}_{[k]}) \\
= & \mathscr{F}(\mat{v}_{[k+1]}) + \frac{1}{2}\sum_{s\neq t}d_{st,[k]}\prod_{n=1}^{N}\left\langle
\frac{\mat{v}_{s,[k+1]}^{(n)}}{\|\mat{v}_{s,[k]}^{(n)}\|},
\frac{\mat{v}_{t,[k+1]}^{(n)}}{\|\mat{v}_{t,[k]}^{(n)}\|} \right\rangle \\
&  + \frac{1}{4} \sum_{s\neq t}\mu_{[k]}
\prod_{n=1}^{N}\left\langle \frac{\mat{v}_{s,[k+1]}^{(n)}}
{\|\mat{v}_{s,[k]}^{(n)}\|},\frac{\mat{v}_{t,[k+1]}^{(n)}}{\|\mat{v}_{t,[k]}^{(n)}\|} \right\rangle^2 \\
= & \mathscr{F}(\mat{v}_{[k+1]}) + \frac{1}{4}\sum_{s\neq t}
\mu_{[k]}\left[\left(\prod_{n=1}^{N}\left\langle \frac{\mat{v}_{s,[k+1]}^{(n)}}
{\|\mat{v}_{s,[k]}^{(n)}\|},\frac{\mat{v}_{t,[k+1]}^{(n)}}{\|\mat{v}_{t,[k]}^{(n)}\|} \right\rangle
+ \frac{d_{st,[k]}}{\mu_{[k]}}\right)^2 - \left(\frac{d_{st,[k]}}{\mu_{[k]}}\right)^2 \right] \\
\geq & \frac{1}{4}\sum_{s\neq t}
\mu_{[k]}\left[\left(\prod_{n=1}^{N}\left\langle \frac{\mat{v}_{s,[k+1]}^{(n)}}
{\|\mat{v}_{s,[k]}^{(n)}\|},\frac{\mat{v}_{t,[k+1]}^{(n)}}{\|\mat{v}_{t,[k]}^{(n)}\|} \right\rangle
+ o(1)\right)^2 - o(1) \right].
\end{align*}
Noting that $\lim_{k\rightarrow \infty}\mu_{[k]}=\infty$ and $\mathscr{F}(\bar{\mat{v}})$
is bounded, we obtain the result.
\end{proof}

\begin{corollary}\label{threshold}
Suppose that (\ref{increase}) holds for Algorithm \ref{alg1}, and
$\prod_n\frac{\|\mat{v}_{r,[k]}^{(n)}\|}{\|\mat{v}_{r,[k+1]}^{(n)}\|}$ is bounded for all
$r$ and $k$. Then we have
$$
\lim_{k\rightarrow \infty}
\prod_{n=1}^{N}\left\langle \frac{\mat{v}_{s,[k]}^{(n)}}{\|\mat{v}_{s,[k]}^{(n)}\|},
\frac{\mat{v}_{t,[k]}^{(n)}}{\|\mat{v}_{t,[k]}^{(n)}\|} \right\rangle
 =0 \quad \text{for all}\quad  1\leq s \neq t \leq R.
$$
\end{corollary}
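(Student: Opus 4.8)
The plan is to deduce this statement directly from the preceding proposition by a purely algebraic manipulation. The proposition already supplies the limit of a \emph{mixed} quantity, in which the inner products are formed from the iterates at step $[k+1]$ but each factor is divided by the norm of the corresponding iterate at step $[k]$. The corollary asks instead for the genuinely normalized quantity, where the numerator and the normalizing norm carry the same index. So the whole task is to convert one into the other, and the extra boundedness hypothesis is exactly what makes this conversion harmless in the limit.

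The key step is to factor out the scalar norm ratios. Writing $\mat{v}_{r,[k+1]}^{(n)}/\|\mat{v}_{r,[k]}^{(n)}\| = \bigl(\|\mat{v}_{r,[k+1]}^{(n)}\|/\|\mat{v}_{r,[k]}^{(n)}\|\bigr)\,\bigl(\mat{v}_{r,[k+1]}^{(n)}/\|\mat{v}_{r,[k+1]}^{(n)}\|\bigr)$ for $r\in\{s,t\}$ and every mode $n$, and then collecting the resulting scalars across the product over $n$, I obtain
$$
\prod_{n=1}^{N}\left\langle \frac{\mat{v}_{s,[k+1]}^{(n)}}{\|\mat{v}_{s,[k+1]}^{(n)}\|},
\frac{\mat{v}_{t,[k+1]}^{(n)}}{\|\mat{v}_{t,[k+1]}^{(n)}\|}\right\rangle
= \left(\prod_{n=1}^{N}\frac{\|\mat{v}_{s,[k]}^{(n)}\|}{\|\mat{v}_{s,[k+1]}^{(n)}\|}\right)
\left(\prod_{n=1}^{N}\frac{\|\mat{v}_{t,[k]}^{(n)}\|}{\|\mat{v}_{t,[k+1]}^{(n)}\|}\right)
\prod_{n=1}^{N}\left\langle \frac{\mat{v}_{s,[k+1]}^{(n)}}{\|\mat{v}_{s,[k]}^{(n)}\|},
\frac{\mat{v}_{t,[k+1]}^{(n)}}{\|\mat{v}_{t,[k]}^{(n)}\|}\right\rangle.
$$

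To conclude, I would observe that the two parenthesized prefactors on the right are precisely the quantities assumed bounded in the hypothesis (taking $r=s$ and $r=t$), while the trailing product is exactly the mixed quantity that tends to $0$ by the preceding proposition. A bounded sequence times a null sequence is null, so the left-hand side, which is the fully normalized inner product at step $[k+1]$, tends to $0$ as $k\to\infty$. Since this sequence is merely the index-shifted copy of the one appearing in the statement, the claimed limit with index $[k]$ follows immediately. I do not anticipate any serious obstacle: the entire content is the factorization above, and the boundedness assumption is evidently tailored so that passing from the norms at step $[k]$ to those at step $[k+1]$ cannot destroy convergence. The only point needing care is the bookkeeping of the index shift and the verification that the hypothesis controls the $s$- and $t$-prefactors simultaneously for all $k$.
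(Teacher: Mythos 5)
Your proof is correct: the bilinearity factorization is valid, the hypothesis (with $r=s$ and $r=t$) bounds exactly the two prefactors, and the index shift is harmless, so the corollary follows from the proposition. The paper states this corollary without proof, and your argument is precisely the intended derivation — the boundedness assumption exists solely to absorb the norm ratios $\prod_n\|\mat{v}_{r,[k]}^{(n)}\|/\|\mat{v}_{r,[k+1]}^{(n)}\|$ when passing from the mixed-index quantity in the proposition to the fully normalized one.
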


\subsection{Orthogonalization of rank-one tensors}

OD-ALM can only obtain an approximate solution of (\ref{orank}). We need to develop an
orthogonalization procedure to make the orthogonality constraint exact for the final result.

Suppose we have obtained a decomposition by OD-ALM:
$$
\mathcal{A} \approx \sum_{r=1}^{R} \otimes_{n=1}^N \mat{v}_r^{(n)}.
$$
First, we normalize each $\mat{v}_r^{(n)}$ to $\mat{u}_r^{(n)}$, i.e.,
$\mat{u}_r^{(n)}=\mat{v}_r^{(n)}/\|\mat{v}^{(n)}_{r}\|$.
Assume that we have orthogonalizated the first $\ell-1$ rank-one components:
$$
\left\langle \otimes_{n=1}^N \mat{u}_s^{(n)}, \otimes_{n=1}^N \mat{u}_t^{(n)}
\right\rangle =0, \quad 1 \leq s \neq t \leq \ell-1.
$$
We start to handle the $\ell$th rank-one component. Denote
$$
\bar{\mat{U}}^{(n)} = \begin{bmatrix}\mat{u}_1^{(n)}& \cdots &\mat{u}_{\ell-1}^{(n)}
\end{bmatrix}, \quad n=1,\ldots,N.
$$
Compute the absolute value of the inner product $\left|\left\langle \mat{u}_r^{(n)},
\mat{u}_{\ell}^{(n)}\right\rangle \right|$ for $n=1,\ldots,N$ and $r=1\ldots, \ell-1$ and stack
the results as a matrix:
$$
\mat{P} = \left|\begin{bmatrix} \mat{u}_{\ell}^{(1)^T}\bar{\mat{U}}^{(1)} \\ \vdots  \\
\mat{u}_{\ell}^{(N)^T}\bar{\mat{U}}^{(N)} \end{bmatrix} \right| \in \R^{N \times (\ell-1)},
$$
where $|\mat{M}|$ denotes the entrywise absolute value of $\mat{M}$. Let $\mat{P}(m_r,r)=
\min \{\mat{P}(1,r),\ldots,\mat{P}(N,r)\}$. That is, $\mat{u}_{r}^{(m_r)}$
and $\mat{u}_{\ell}^{(m_r)}$ are a pair of vectors that is the closest to orthogonality.
Let $\{r: m_r = n\}$ be $\{r_1\ldots, r_{\rho(n)}\}$. We will modify $\mat{u}_{\ell}^{(n)}$
to $\mat{u}_{\ell}^{(n)} - \sum_{j=1}^{\rho(n)}x_j \mat{u}_{r_j}^{(n)}$
such that
$$
\left\langle \mat{u}_{\ell}^{(n)} - \sum_{j=1}^{\rho(n)}x_j \mat{u}_{r_j}^{(n)},
\mat{u}_{s}^{(n)}  \right \rangle =0, \quad s = r_1,\ldots, r_{\rho(n)},
$$
whose matrix form is
$$
\begin{bmatrix}\mat{u}_{r_1}^{(n)} & \cdots & \mat{u}_{r_{\rho(n)}}^{(n)}\end{bmatrix}^T
\begin{bmatrix}\mat{u}_{r_1}^{(n)} & \cdots & \mat{u}_{r_{\rho(n)}}^{(n)}\end{bmatrix}
\begin{bmatrix}x_{1} \\ \vdots \\ x_{\rho(n)} \end{bmatrix} =
\begin{bmatrix}\mat{u}_{r_1}^{(n)} & \cdots & \mat{u}_{r_{\rho(n)}}^{(n)}\end{bmatrix}^T
\mat{u}_{\ell}^{(n)}.
$$
We present the whole procedure of the orthogonalization in Algorithm \ref{alg2}. This procedure
can also be used for generating general orthonormal lists of rank-one tensors.

\begin{algorithm}[!ht]
\caption{Orthogonalization of rank-one tensors}
\label{alg2}
\small
\SetAlgoLined
\DontPrintSemicolon
\SetKwFunction{SVD}{SVD}
\SetKwFunction{Up}{Update-N}
\KwIn{A list of rank-one tensors $\{\mat{v}^{(n)}_{r}\}_{n,r}$}
\KwOut{An orthonormal list rank-one tensors $\{\mat{u}^{(n)}_{r}\}_{n,r}$}
\For{$r = 1, \dots, R$}{
\For{$n = 1, \dots, N$}{
$\eta \leftarrow \|\mat{v}^{(n)}_r\|$\;
$\mat{u}^{(n)}_r \leftarrow \mat{v}^{(n)}_r/\eta$\;
}}
\For{$\ell = 2, \dots, R$}{
\For{$n = 1, \dots, N$}{
$\mat{U} \leftarrow\begin{bmatrix}\mat{u}_1^{(n)} & \cdots & \mat{u}_{\ell-1}^{(n)}
\end{bmatrix}$\;
$\mat{P}(n,:)\leftarrow \left|\mat{u}_{\ell}^{(n)^T}\mat{U}\right|$
}
\For{$r = 1, \dots, \ell-1$}{
Find $\mat{P}(m_r,r)=\min \{\mat{P}(1,r),\ldots,\mat{P}(N,r)\}$}
\For{$n = 1, \dots, N$}{
$\{r_1\ldots, r_{\rho(n)}\} \leftarrow $ all indices satisfying $m_{r_j}=n, j =1,\ldots,\rho(n)$\;
\eIf{$\rho(n)=0$}{$\mat{u}^{(n)}_{\ell} \leftarrow \mat{u}^{(n)}_{\ell}$}
{
$\mat{B} \leftarrow \begin{bmatrix}\mat{u}_{r_1}^{(n)} & \cdots & \mat{u}_{r_{\rho(n)}}^{(n)}\end{bmatrix}$\;
Solve $\mat{B}^T \mat{B} \mat{x} = \mat{B}^T \mat{u}_{\ell}^{(n)}$ for $\mat{x}$\;
$\mat{u}^{(n)}_{\ell} \leftarrow \mat{u}^{(n)}_{\ell} - \mat{B} \mat{x}$\;
$\eta \leftarrow \|\mat{u}^{(n)}_r\|$\;
$\mat{u}^{(n)}_r \leftarrow \mat{u}^{(n)}_r/\eta$\;}
}}
\end{algorithm}

The final orthogonal rank-$R$ approximation is the orthogonal projection of $\mathcal{A}$
onto the space spanned by the orthonormal list $\{\otimes_{n=1}^{N} \mat{u}_1^{(n)},\ldots,
\otimes_{n=1}^{N} \mat{u}_R^{(n)}\}$:
$$
\sum_{r=1}^{R} \sigma_r \otimes_{n=1}^{N} \mat{u}_r^{(n)},
$$
where the coefficient $\sigma_r = \left\langle \mathcal{A},
\otimes_{n=1}^{N} \mat{u}_r^{(n)}\right\rangle$.

\section{Numerical experiments}\label{sec5}

We will show the performance of OD-ALM combined with the orthogonalization procedure in this section.
All experiments are performed on MATLAB R2016a with Tensor Toolbox, version 3.0
\cite{TTB_Software} on a laptop (Intel Core i5-6300HQ CPU @  2.30GHz, 8.00G RAM).
The test data include both synthetic and real-world tensors. The synthetic tensors
are generated from known ground truth and thus make the evaluation reliable.
Choosing real-world tensors is to assess the approximation ability of orthogonal
decompositions in practice.

The test tensors are shown in Table \ref{data}, where $\mathcal{A}_1,\ldots,\mathcal{A}_4$ are
synthetic tensors and $\mathcal{A}_5,\ldots,\mathcal{A}_8$ are real-world tensors. The tensor
$\mathcal{A}_1$ is a randomly generated tensor, $\mathcal{A}_2$ is a randomly generated rank-5
tensor, and $\mathcal{A}_3$ is a Hilbert tensor also used in \cite{guan2019numerical}.
For $\mathcal{A}_4$, we generate an orthonormal list of rank-one tensors by Algorithm \ref{alg2}
and then use this list to generate an orthogonal rank-5 tensor $\mathcal{B}_1$. The final tensor
$\mathcal{A}_4$ is
$$
\mathcal{A}_4 = \mathcal{B}_1 + \rho \mathcal{B}_2,
$$
where the Gaussian noise tensor $\mathcal{B}_2$ has normally distributed elements, and
$\rho = 0.1 \|\mathcal{B}_1\|/\|\mathcal{B}_2\|$. The tensors $\mathcal{A}_5,\mathcal{A}_6$
are hyperspectral images
\footnote{The hyperspectral image data have been used in \cite{zhu2014spectral} and available at
\url{t https://rslab.ut.ac.ir/data}}, and $\mathcal{A}_7,
\mathcal{A}_8$ are video tensors \footnote{The video data are from the video trace
library \cite{seeling2011video} and available at
\url{http://trace.eas.asu.edu/yuv/}}. We will factorize each tensor into $R$ terms by
different methods, where $R$ is prescribed in Table \ref{data}.

\begin{table}[!ht]
\footnotesize
  \centering
  \caption{The test tensors. The value $R$ is the number of components for all methods.}
  \label{data}
  {\renewcommand{\arraystretch}{1.2}
    \begin{tabular}{cccc}
    \hline
    Tensor  & Size  & $R$ & Note  \\
    \hline
    $\mathcal{A}_1$  & $20\times 16\times 10\times 32$ & 5 & random tensor \\
    $\mathcal{A}_2$  & $20\times 16\times 10\times 32$ & 5 &  rank-5 tensor  \\
    $\mathcal{A}_3$  & $20\times 16\times 10\times 32$ & 5 & $\mathcal{A}_3(i_1,i_2,i_3,i_4)=
    1/(i_1+i_2+i_3+i_4-3)$ \\
    $\mathcal{A}_4$  & $20\times 16\times 10\times 32$ & 5 &  orthogonal rank-5 tensor
    with Gaussian noise \\
    $\mathcal{A}_5$  & $95 \times 95 \times 156$ & 5 &  hyperspectral image -- Samson  \\
    $\mathcal{A}_6$  & $100 \times 100 \times 224$ & 5 &  hyperspectral image -- Jasper Ridge  \\
    $\mathcal{A}_7$  & $144 \times 176 \times 3 \times 300$ & 2 & video data -- Akiyo  \\
    $\mathcal{A}_8$  & $144 \times 176 \times 3 \times 300$ & 2 & video data -- Hall Monitor  \\
    \hline
    \end{tabular}
    }
\end{table}

Suppose $\mathcal{B}$ is an approximation of $\mathcal{A}$ obtained by any method. We use the
relative error (RErr) to evaluate the result:
$$
\text{RErr} = \frac{\|\mathcal{A}-\mathcal{B}\|}{\|\mathcal{A}\|}.
$$

\subsection{Implementation details of OD-ALM}

The initialization is crucial for OD-ALM. We adopt the result of the alternating least squares
algorithm (CP-ALS) \cite{harshman1970foundations,carroll1970analysis,kolda2009tensor}
for (\ref{cp}) as the initialization, because this result is just the numerical solution of
(\ref{alm}) with Lagrange multipliers and penalty parameters equal to zero, which is relatively
near to the solution of the first subproblem of OD-ALM generally. The CP-ALS is with the truncated
HOSVD initialization, and terminates if the relative change in the function value is less than
$10^{-6}$ or the number of iterations exceeds 500.

We have tried the steepest descent method, the conjugate gradient method, the
Broyden-Fletcher-Goldfarb-Shanno (BFGS) method and the limited-memory BFGS (L-BFGS) method
to solve the subproblems (\ref{alm}) and find that the L-BFGS method outperforms the other
three ones. Hence, we use the L-BFGS method with $m=20$ levels of memory in all tests.
We stop the procedure of the L-BFGS method if the relative change between successive iterates
is less than $10^{-8}$, or the $\ell_2$ norm of the
gradient divided by the number of entries is less than $\epsilon_{\text{inner}}$, which
will be specified later. The maximum number of inner iterations is set to be 500.
We adopt the Mor\'{e}-Thuente line search \cite{more1994line} from MINPACK
\footnote{A Matlab implementation, adapted by Dianne P. O'Leary, is available at
\url{http://www.cs.umd.edu/users/oleary/software/}}. For all experiments, Mor\'{e}-Thuente line search
parameters used are as follows: $10^{-4}$ for the function value tolerance, $10^{-2}$ for the gradient
norm tolerance, a starting search step length of 1 and a maximum of 20 iterations.

For the solution $\mat{v}_{[k]}$ of the $k$th subproblem, denote
\begin{equation}\label{angle}
\theta_{[k]} := \max_{s\neq t}\min_{n}\left|\left\langle \frac{\mat{v}_{s,[k]}^{(n)}}
{\|\mat{v}_{s,[k]}^{(n)}\|}, \frac{\mat{v}_{t,[k]}^{(n)}}
{\|\mat{v}_{t,[k]}^{(n)}\|} \right\rangle \right|.
\end{equation}
By Corollary \ref{threshold}, we can terminate the outer iteration when $\theta_{[k]}<
\epsilon_{\text{outer}}$, which will be specified later.
The maximum number of outer iterations is set to be 25.

\subsection{Influence of stopping tolerances}

We test different settings of tolerances: $\epsilon_{\text{inner}}=10^{-3},10^{-4},10^{-5}$
and $\epsilon_{\text{outer}}=10^{-3},10^{-4},10^{-5}$. We record the number of outer iterations
(denoted by ``iter"), and then orthogonalizate the result by Algorithm \ref{alg2}. The whole
running time is recorded, measured in seconds. Finally, we compute the relative error.
The results are shown in Table \ref{table_stop}, which are averaged over 10 times repeated
running.

\begin{table}[!ht]
\footnotesize
  \centering
  \renewcommand\tabcolsep{3.0pt}
  \caption{Results of OD-ALM under different stopping tolerances.}
  \label{table_stop}
  {\renewcommand{\arraystretch}{1.2}
    \begin{tabular}{|c|c|c|cccccccc|}
    \hline
      & $\epsilon_{\text{outer}}$ & $\epsilon_{\text{inner}}$ &  $\mathcal{A}_1$ & $\mathcal{A}_2$
      & $\mathcal{A}_3$   &  $\mathcal{A}_4$ & $\mathcal{A}_5$ & $\mathcal{A}_6$ & $\mathcal{A}_7$
      & $\mathcal{A}_8$    \\
    \hline
    \hline
    \multirow{9}{*}{iter} & \multirow{3}{*}{$10^{-3}$} & $10^{-3}$ & 10 & 10 & 9 & 11
    & 8  &  8   &  9 &  6 \\
    &  & $10^{-4}$ & 10 & 10 & 6 &  10 & 8  & 8  & 9 & 6 \\
    &  & $10^{-5}$ & 10 & 10 & 6 &  8 & 8 & 8 & 9 & 6 \\
    \cline{2-11}
    & \multirow{3}{*}{$10^{-4}$} & $10^{-3}$ & 11 & 11 & 10 & 12 & 9 & 9 & 9 & 7 \\
    &  & $10^{-4}$ & 11 & 11 & 7 &  11 & 8 & 9 & 9  &  7 \\
    &  & $10^{-5}$ & 11 & 11 & 6 & 9 & 8 & 9 & 9 & 7 \\
    \cline{2-11}
    & \multirow{3}{*}{$10^{-5}$} & $10^{-3}$ & 11 & 11 & 11 & 12 & 9 & 11 & 9 & 7 \\
    &  & $10^{-4}$ & 12 & 11 & 7 &  11 & 9 & 9 & 9 & 7 \\
    &  & $10^{-5}$ & 11 & 11 & 9 & 11 & 9 & 9 & 9 & 7 \\
    \hline
    \multirow{9}{*}{time} & \multirow{3}{*}{$10^{-3}$} & $10^{-3}$ & 1.1 & 1.0 & 1.3 & 0.6
    & 4.8 & 13.2 & 15.8 &  15.3 \\
    &  & $10^{-4}$ & 2.6 & 1.3 & 3.2 &  0.5 & 13.9 & 24.4 & 19.6 & 21.8 \\
    &  & $10^{-5}$ & 4.7 & 1.8 & 4.6 & 0.4 & 24.5 & 34.6 & 22.9 & 30.0 \\
    \cline{2-11}
    & \multirow{3}{*}{$10^{-4}$} & $10^{-3}$ & 1.2 & 1.1 & 1.4 & 0.7 & 5.4 & 15.0 & 15.8 & 16.2 \\
    &  & $10^{-4}$ & 2.7 & 1.6 & 3.3 &  0.5 & 14.9 & 25.3 & 19.6 & 23.7 \\
    &  & $10^{-5}$ & 4.9 & 2.6 & 4.6 & 0.5 & 24.3 & 43.1 & 22.9 & 33.7 \\
    \cline{2-11}
    & \multirow{3}{*}{$10^{-5}$} & $10^{-3}$ & 1.2 & 1.1 & 1.6 & 0.7 & 4.9 & 15.7 & 15.6 & 16.2 \\
    &  & $10^{-4}$ & 2.7 & 1.6 & 2.9 & 0.6 & 15.3 & 26.1 & 19.6 & 23.8 \\
    &  & $10^{-5}$ & 4.9 & 3.9 & 5.8 & 0.9 & 24.5 & 41.9 & 23.3 & 33.6 \\
    \hline
    \multirow{9}{*}{RErr} & \multirow{3}{*}{$10^{-3}$} & $10^{-3}$ & 0.9954 & 0.0559 & 0.0640
    & 0.0994 & 0.1831 & 0.2379 & 0.2931 & 0.2278 \\
    &  & $10^{-4}$ & 0.9954 & 0.0559 & 0.0267 &  0.0994 & 0.1831 & 0.2378 & 0.2931 & 0.2278 \\
    &  & $10^{-5}$ & 0.9954 & 0.0559 & 0.0245 &  0.0993 & 0.1831 & 0.2378 & 0.2931 & 0.2278 \\
    \cline{2-11}
    & \multirow{3}{*}{$10^{-4}$} & $10^{-3}$ & 0.9954 & 0.0559 & 0.0640 & 0.0994
    & 0.1831 & 0.2379 & 0.2931 &  0.2278 \\
    &  & $10^{-4}$ & 0.9954 & 0.0559 & 0.0227 &  0.0994 & 0.1831 & 0.2378 & 0.2931 & 0.2278 \\
    &  & $10^{-5}$ & 0.9954 & 0.0559 & 0.0245 &  0.0993 & 0.1831 & 0.2378 & 0.2931 & 0.2278 \\
    \cline{2-11}
    & \multirow{3}{*}{$10^{-5}$} & $10^{-3}$ & 0.9954 & 0.0559 & 0.0640 & 0.0994 & 0.1831
    & 0.2379 & 0.2931 & 0.2278 \\
    &  & $10^{-4}$ & 0.9954 & 0.0559 & 0.0227 &  0.0994 & 0.1831 & 0.2378 & 0.2931 & 0.2278 \\
    &  & $10^{-5}$ & 0.9954 & 0.0559 & 0.0245 & 0.0993 & 0.1831 & 0.2378 & 0.2931 & 0.2278 \\
    \hline
    \end{tabular}
    }
\end{table}

From Table \ref{table_stop}, we can find that OD-ALM has a good performance on convergence:
the outer iteration numbers are at most 12 on average for all cases. The running time would
increase if we choose a smaller tolerance, but there is no improvement on the relative
error for almost all cases. Therefore, we do not recommend using a too small tolerance in practical
applications. We will use $\epsilon_{\text{inner}}=10^{-4},\epsilon_{\text{outer}}=10^{-4}$
for synthetic tensors and $\epsilon_{\text{inner}}=10^{-3},\epsilon_{\text{outer}}=10^{-3}$
for real-world tensors in all remaining tests.

\subsection{Convergence behaviour}

We show the value of $\theta_{[k]}$ defined in (\ref{angle}), the relative change between successive outer
iterates $\|\mat{v}_{[k]} - \mat{v}_{[k-1]}\|/\|\mat{v}_{[k-1]}\|$ and the number of inner iterations
corresponding to each outer iteration in Figure \ref{1-4} and Figure \ref{5-8}.

\begin{figure}[!ht]
\scriptsize
\begin{center}
\begin{tabular}{c@{\hskip 0.2cm}c@{\hskip 0.2cm}c}
\includegraphics[width=3.5cm]{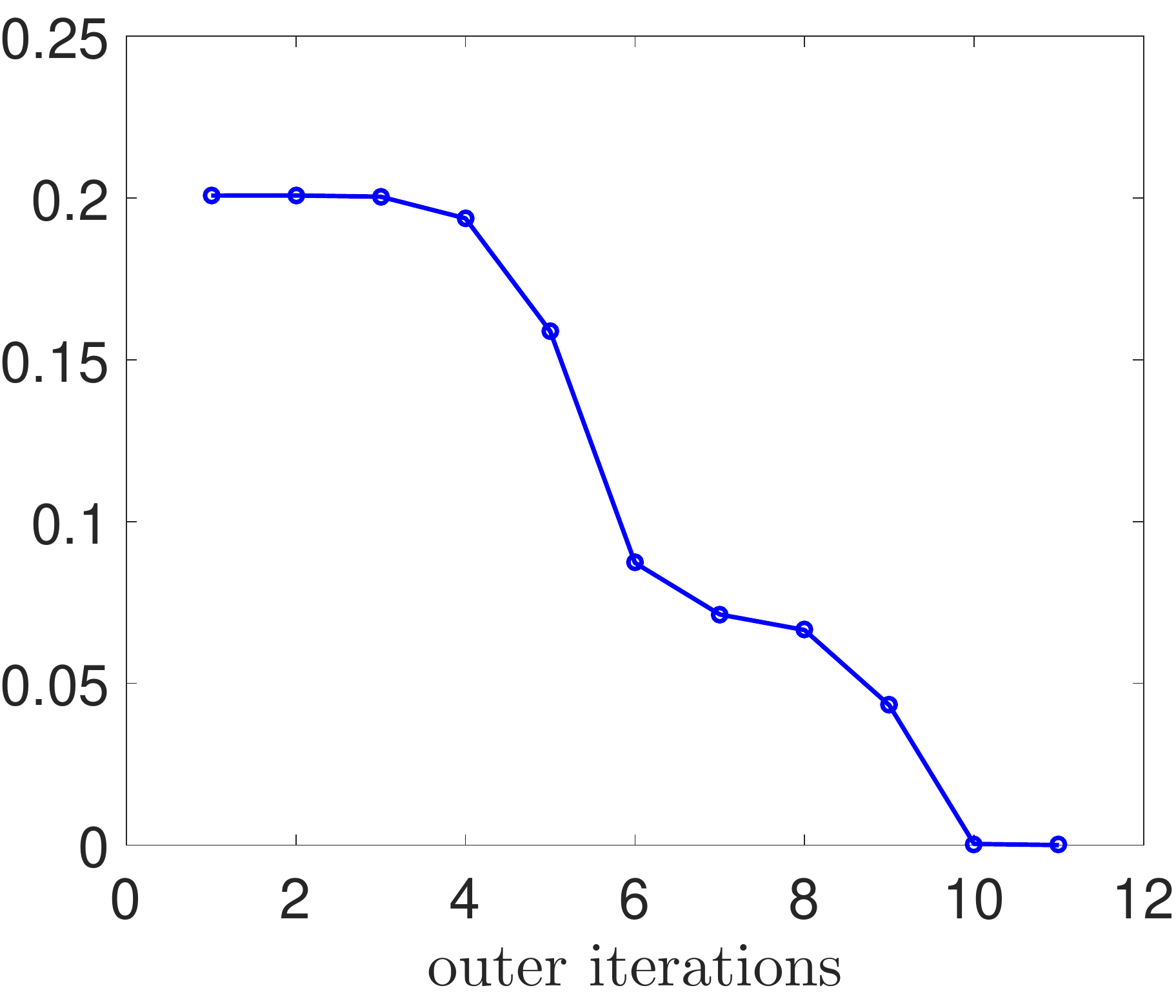} &
\includegraphics[width=3.5cm]{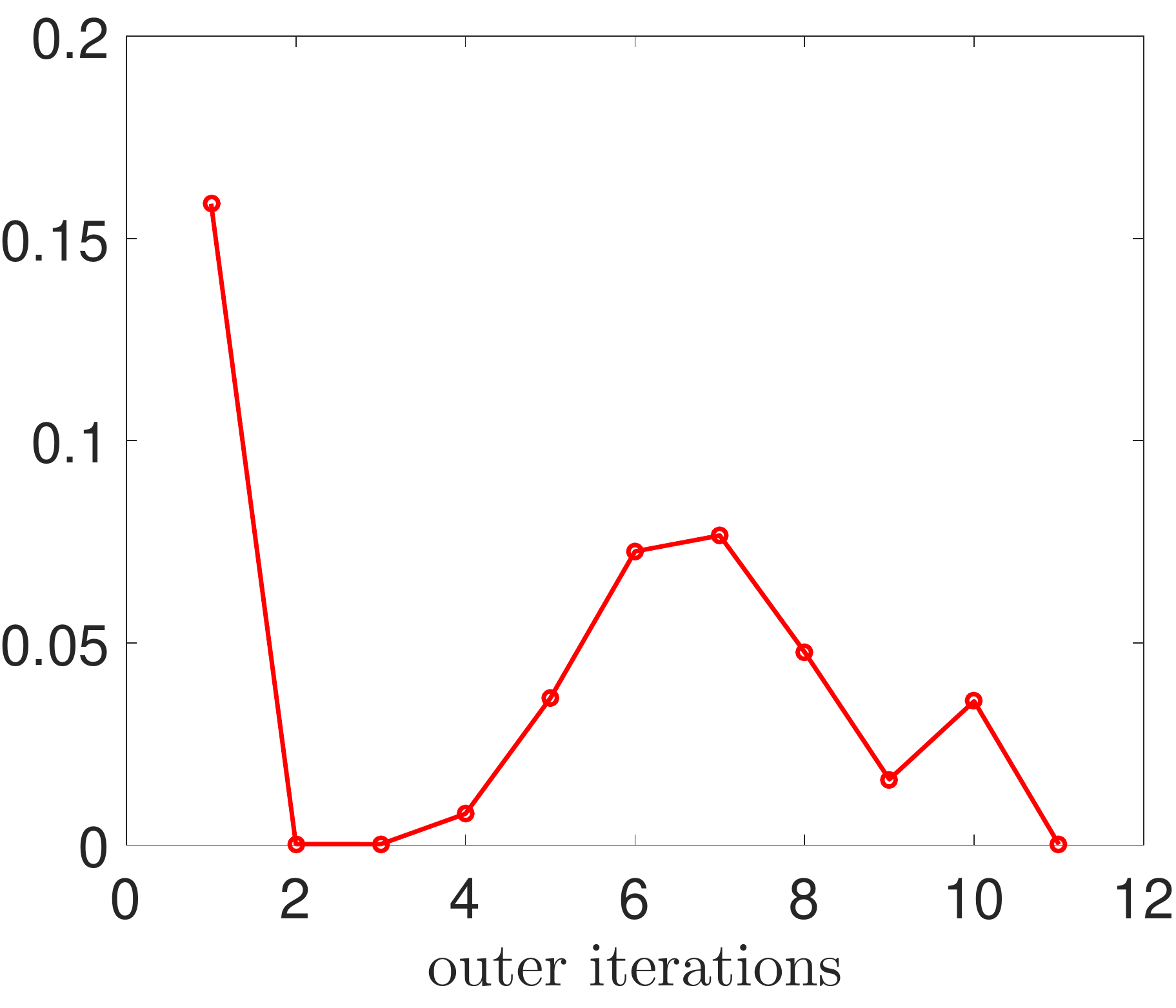} &
\includegraphics[width=3.5cm]{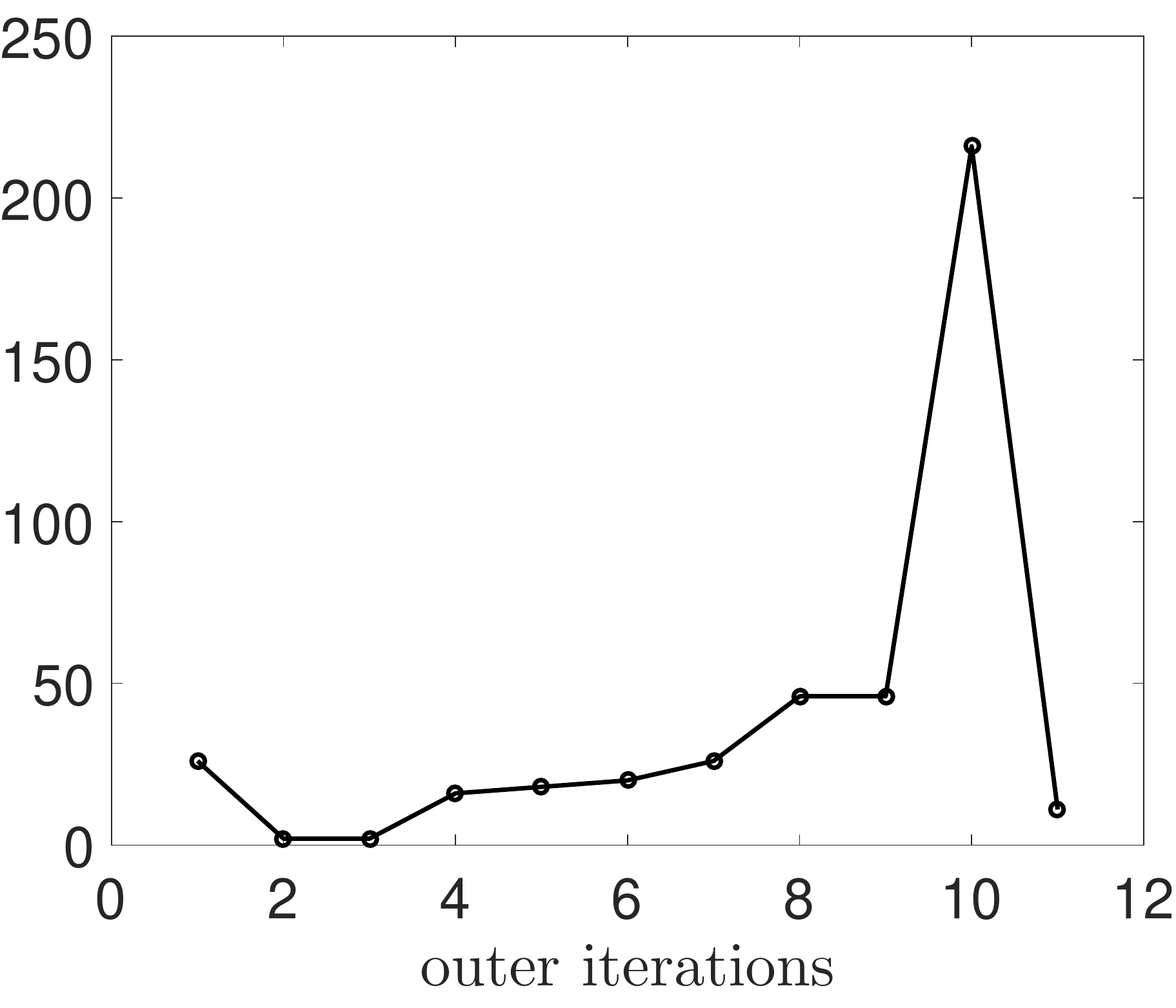}  \\
$\mathcal{A}_1$   &  $\mathcal{A}_1$  & $\mathcal{A}_1$ \\  [2mm]
\includegraphics[width=3.5cm]{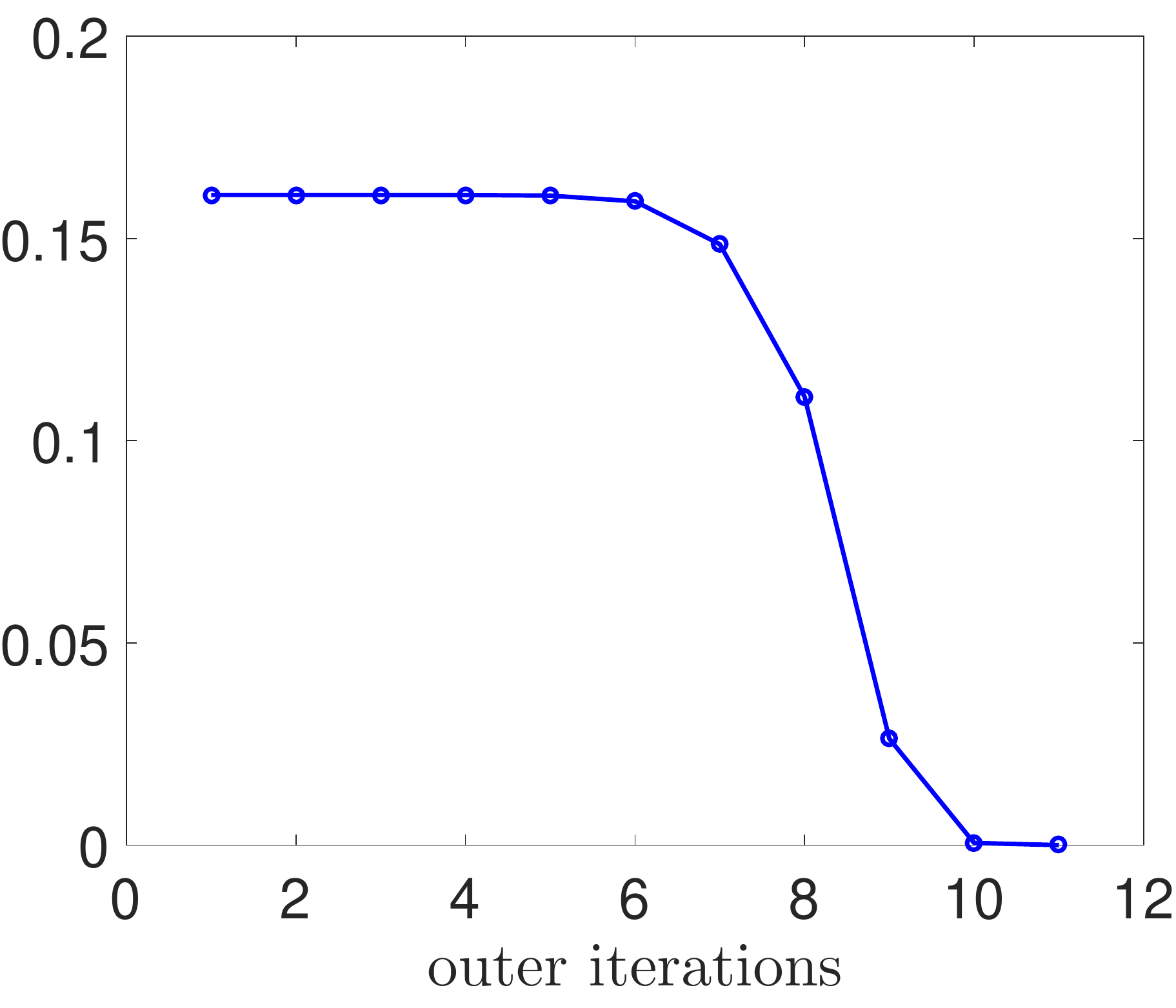} &
\includegraphics[width=3.5cm]{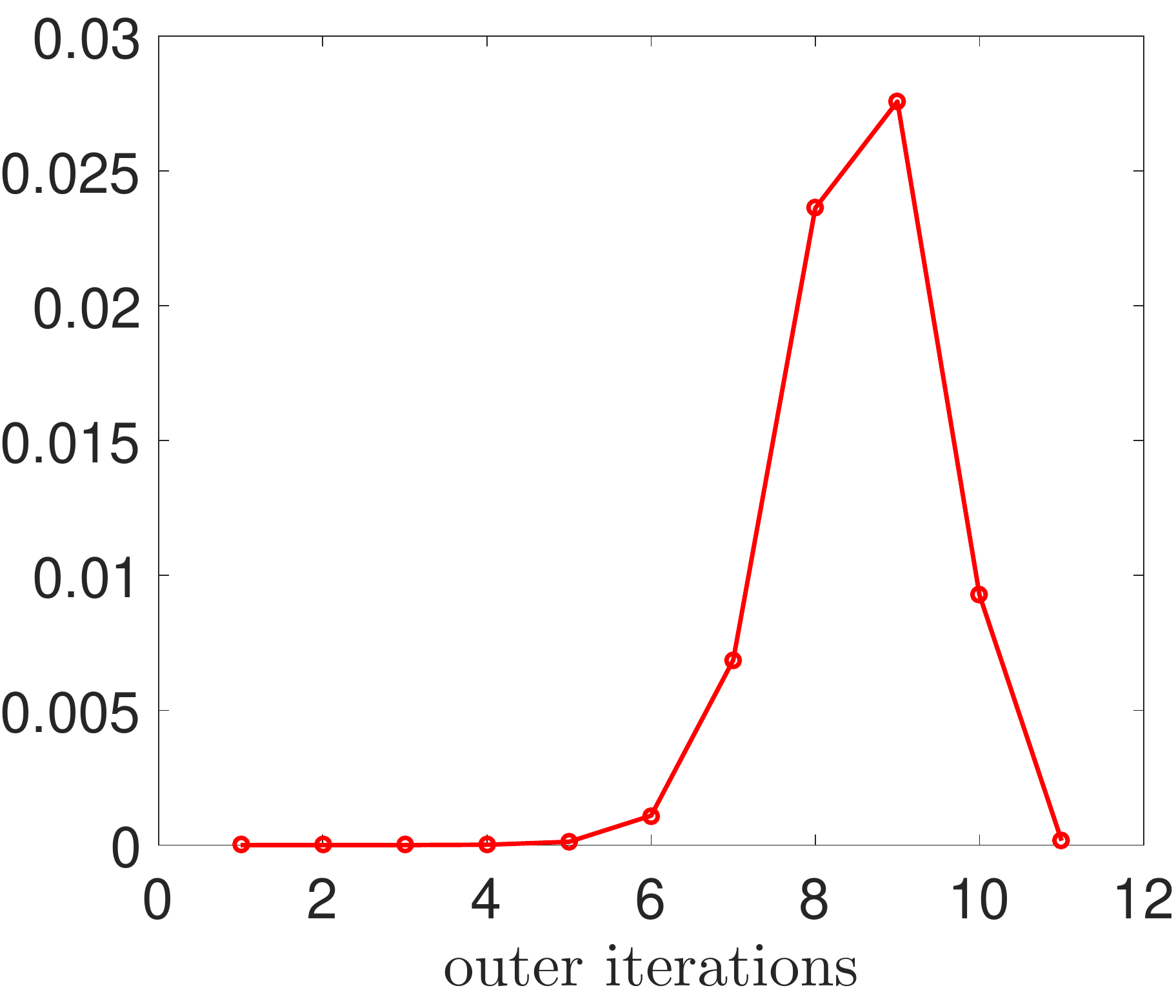} &
\includegraphics[width=3.5cm]{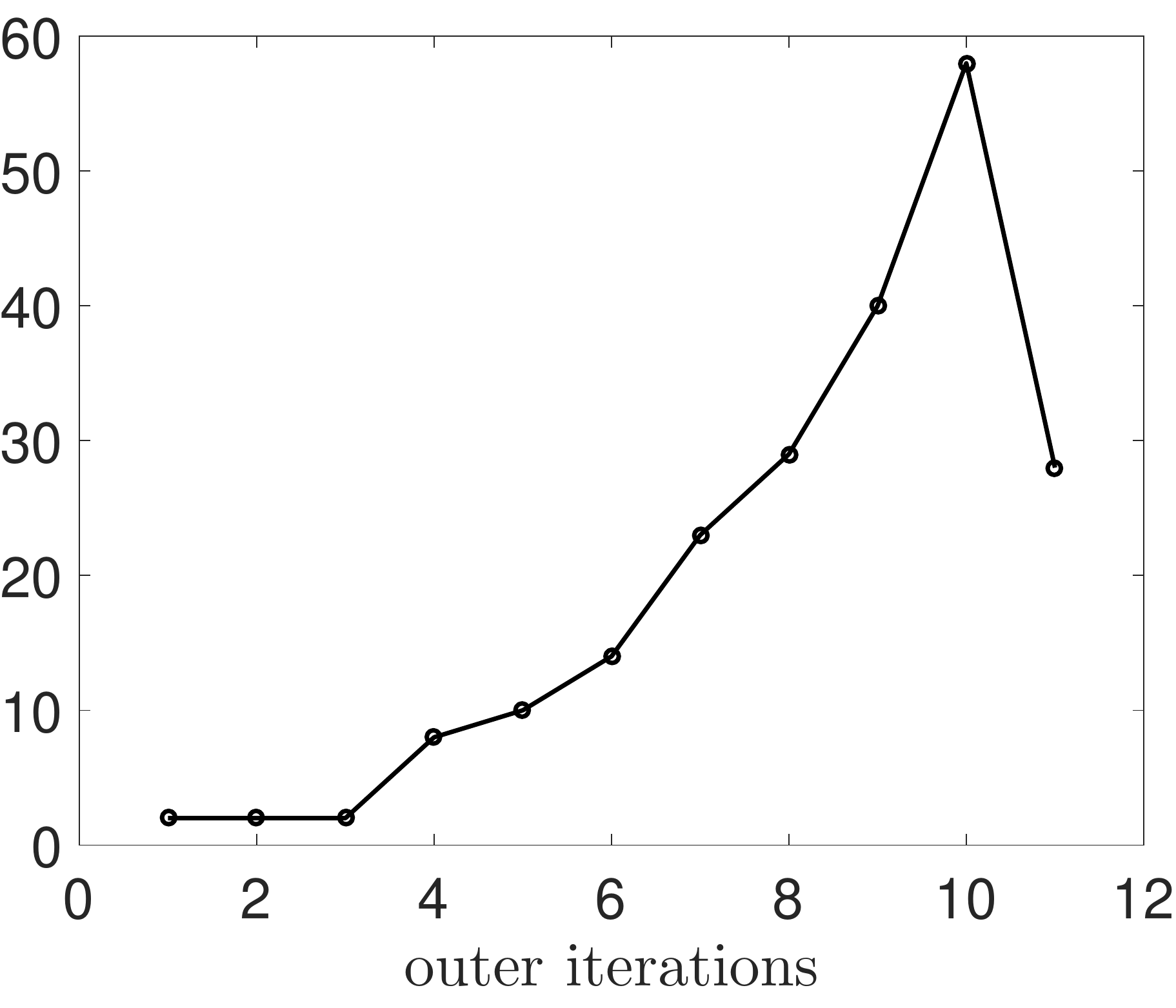}  \\
$\mathcal{A}_2$   &  $\mathcal{A}_2$  & $\mathcal{A}_2$ \\  [2mm]
\includegraphics[width=3.5cm]{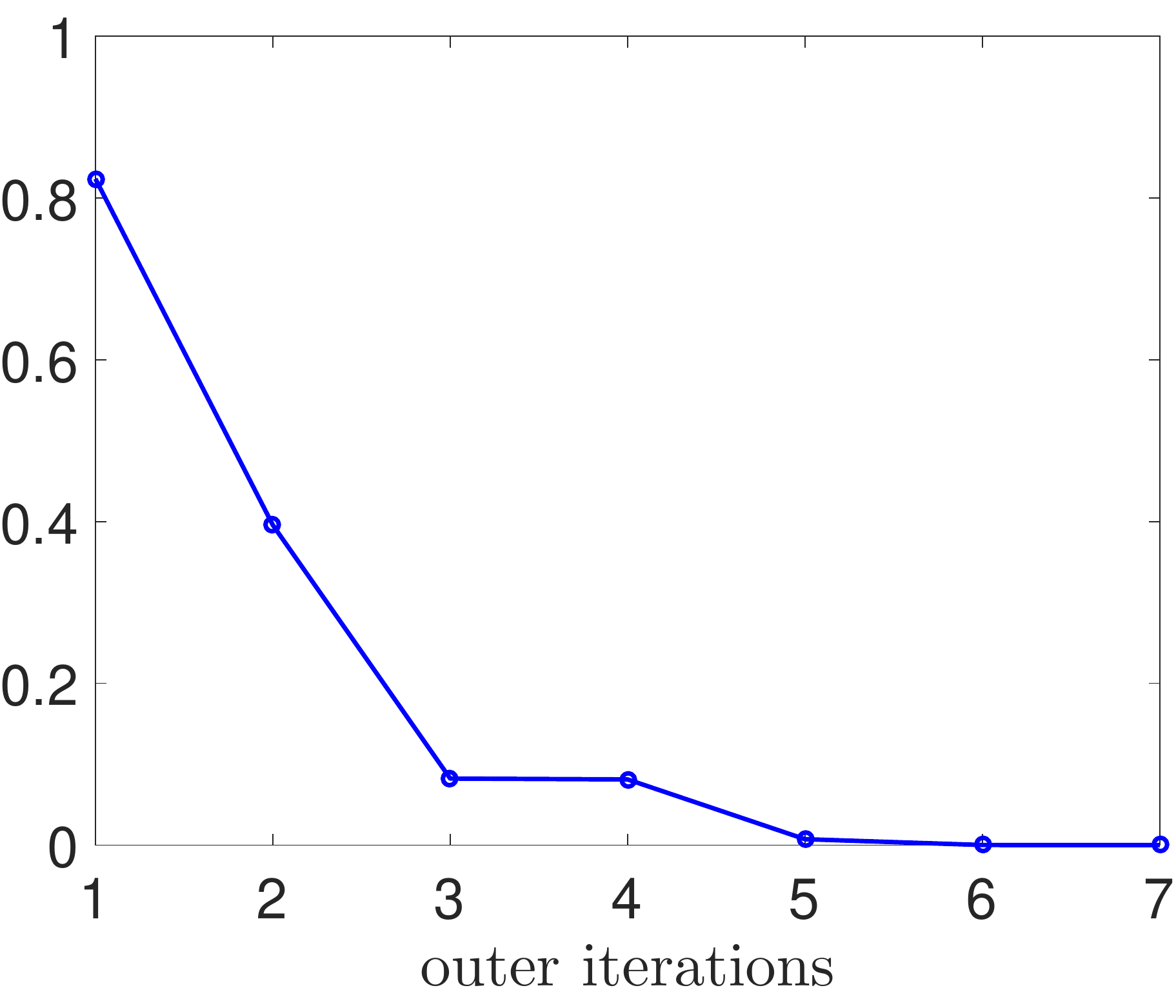} &
\includegraphics[width=3.5cm]{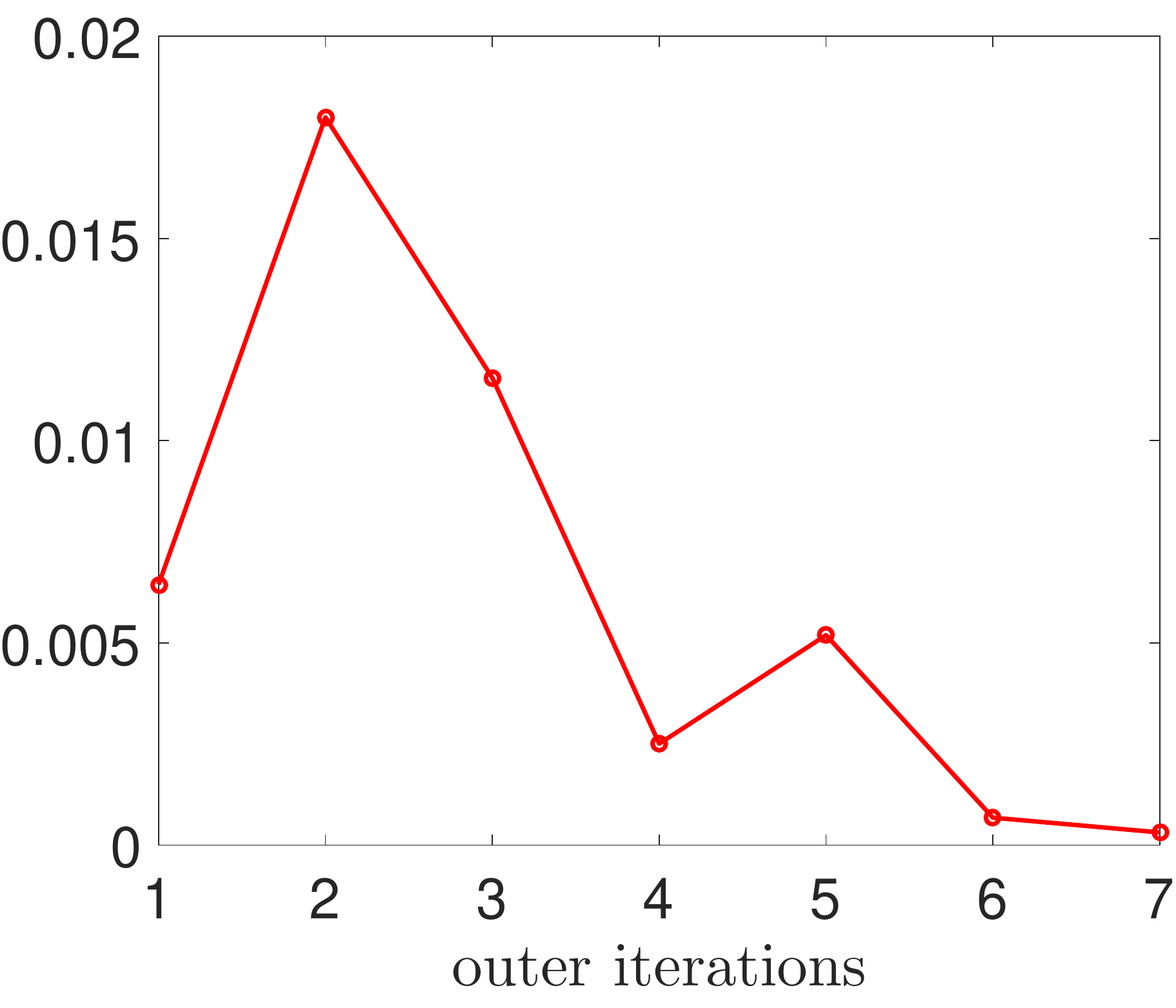} &
\includegraphics[width=3.5cm]{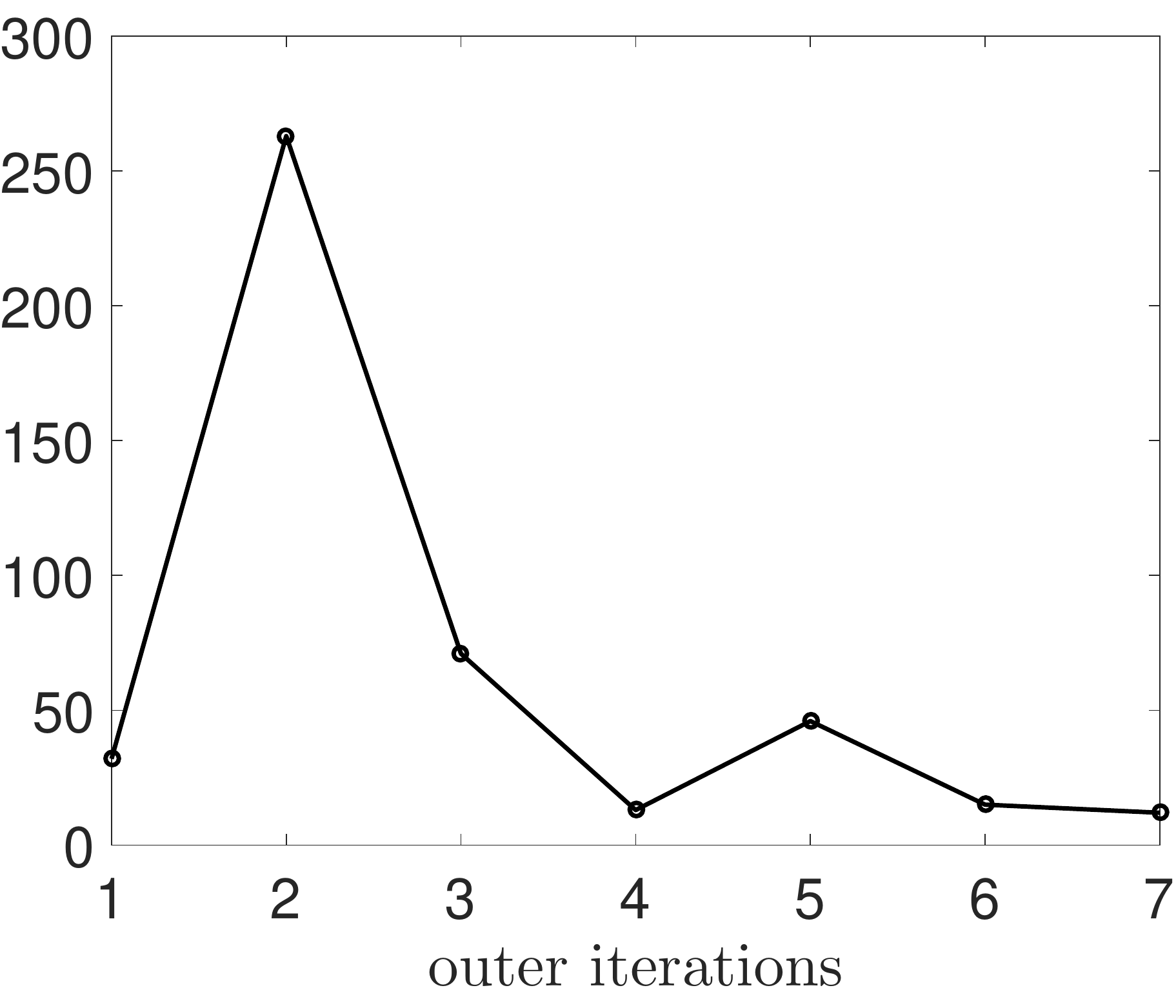}  \\
$\mathcal{A}_3$   &  $\mathcal{A}_3$  & $\mathcal{A}_3$ \\  [2mm]
\includegraphics[width=3.5cm]{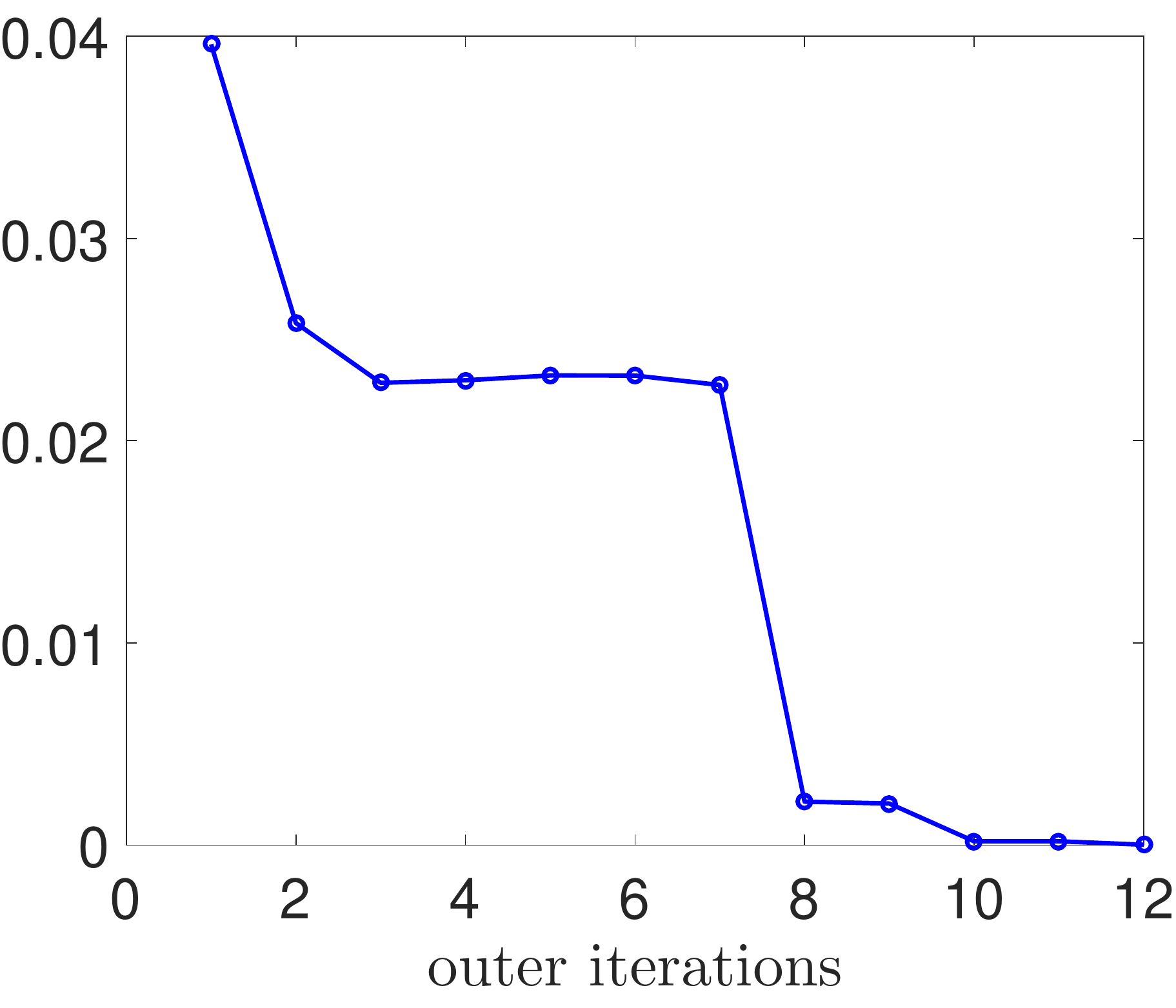} &
\includegraphics[width=3.5cm]{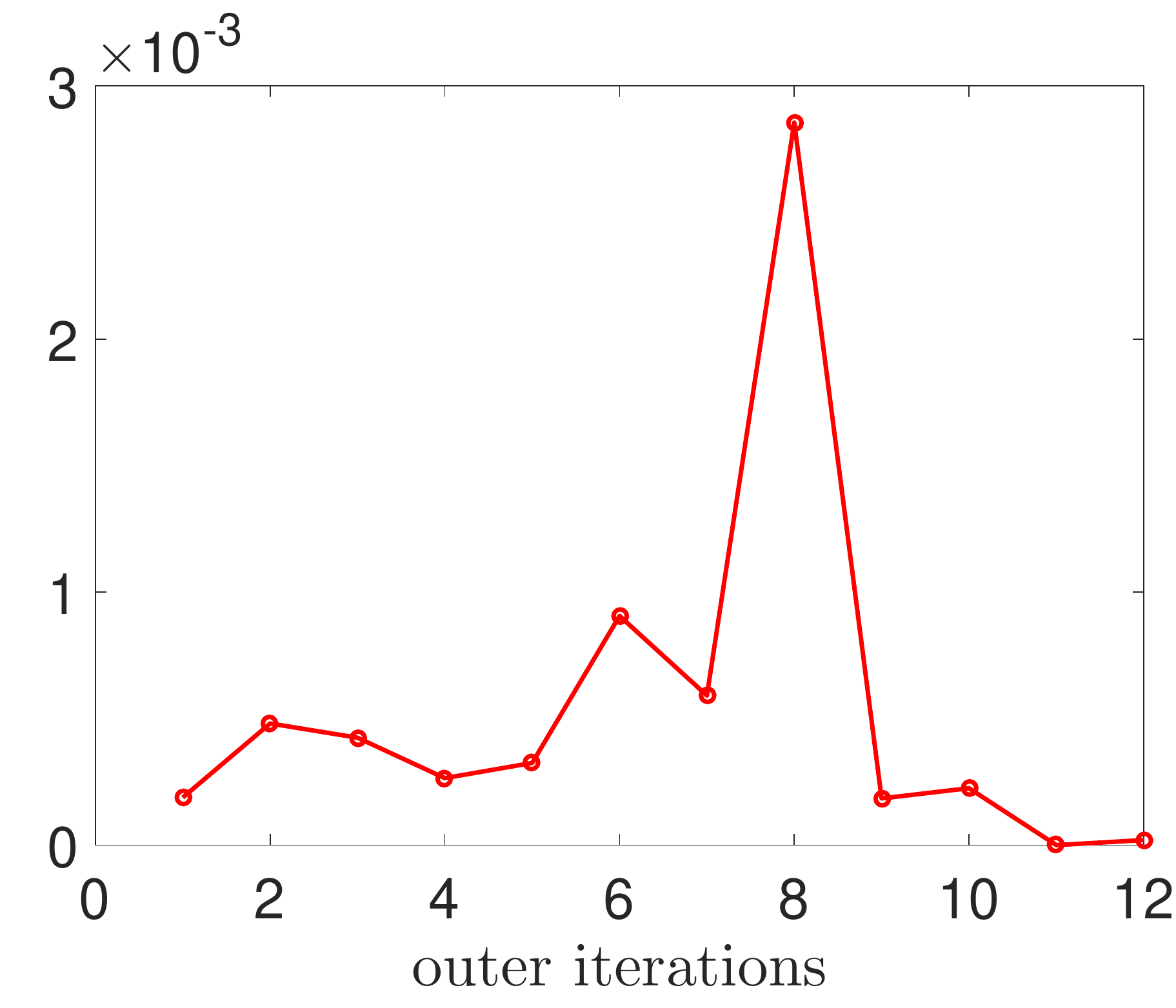} &
\includegraphics[width=3.5cm]{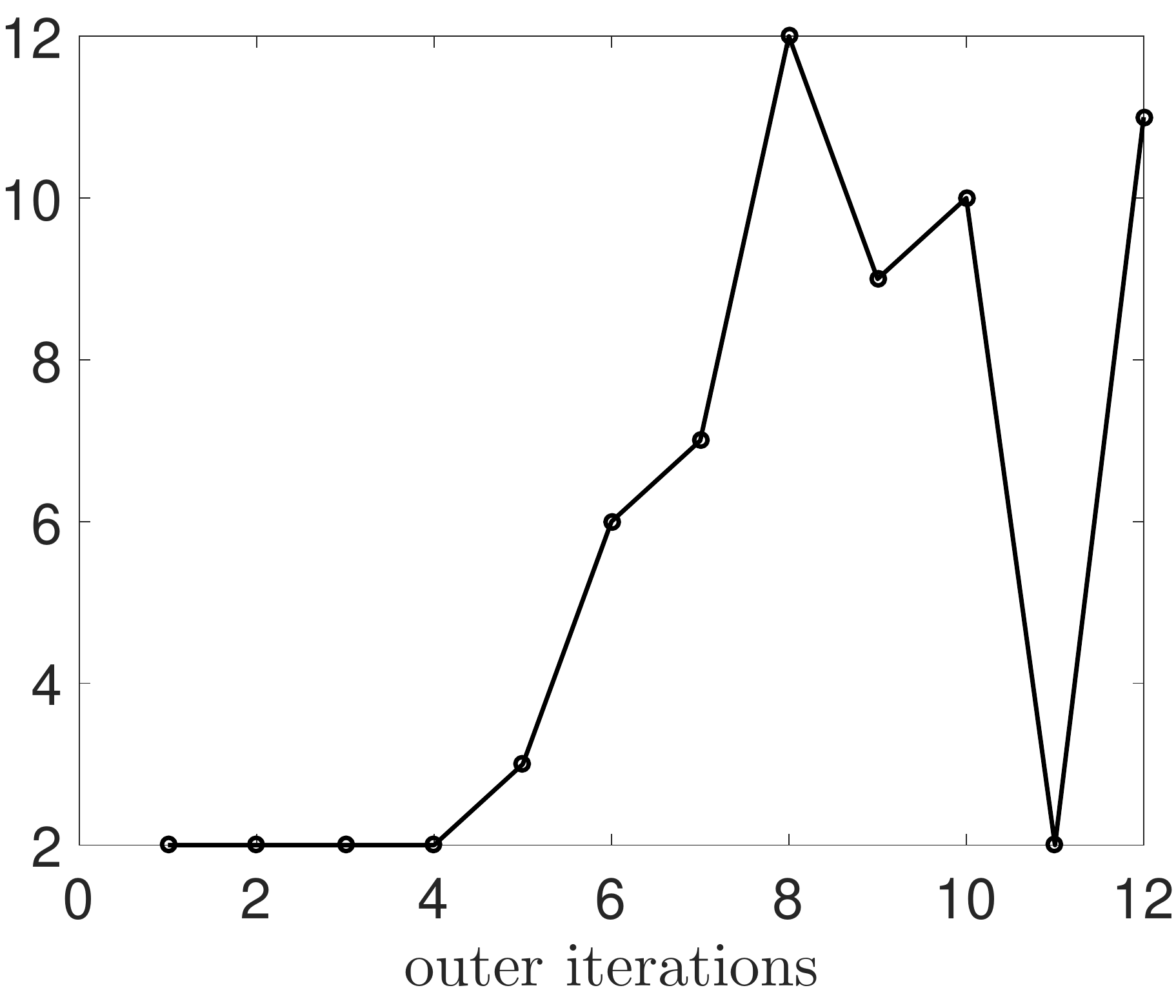}  \\
$\mathcal{A}_4$   &  $\mathcal{A}_4$  & $\mathcal{A}_4$
\end{tabular}
\caption{The convergence behaviour of OD-ALM on $\mathcal{A}_1,\ldots,\mathcal{A}_4$.
The first column is about $\theta_{[k]}$, the second column is about
$\|\mat{v}_{[k]} - \mat{v}_{[k-1]}\|/\|\mat{v}_{[k-1]}\|$, and the last column is about
the number of inner iterations. All values are shown as functions of the number of
outer iterations.}
\label{1-4}
\end{center}
\end{figure}

\begin{figure}[!ht]
\scriptsize
\begin{center}
\begin{tabular}{c@{\hskip 0.2cm}c@{\hskip 0.2cm}c}
\includegraphics[width=3.5cm]{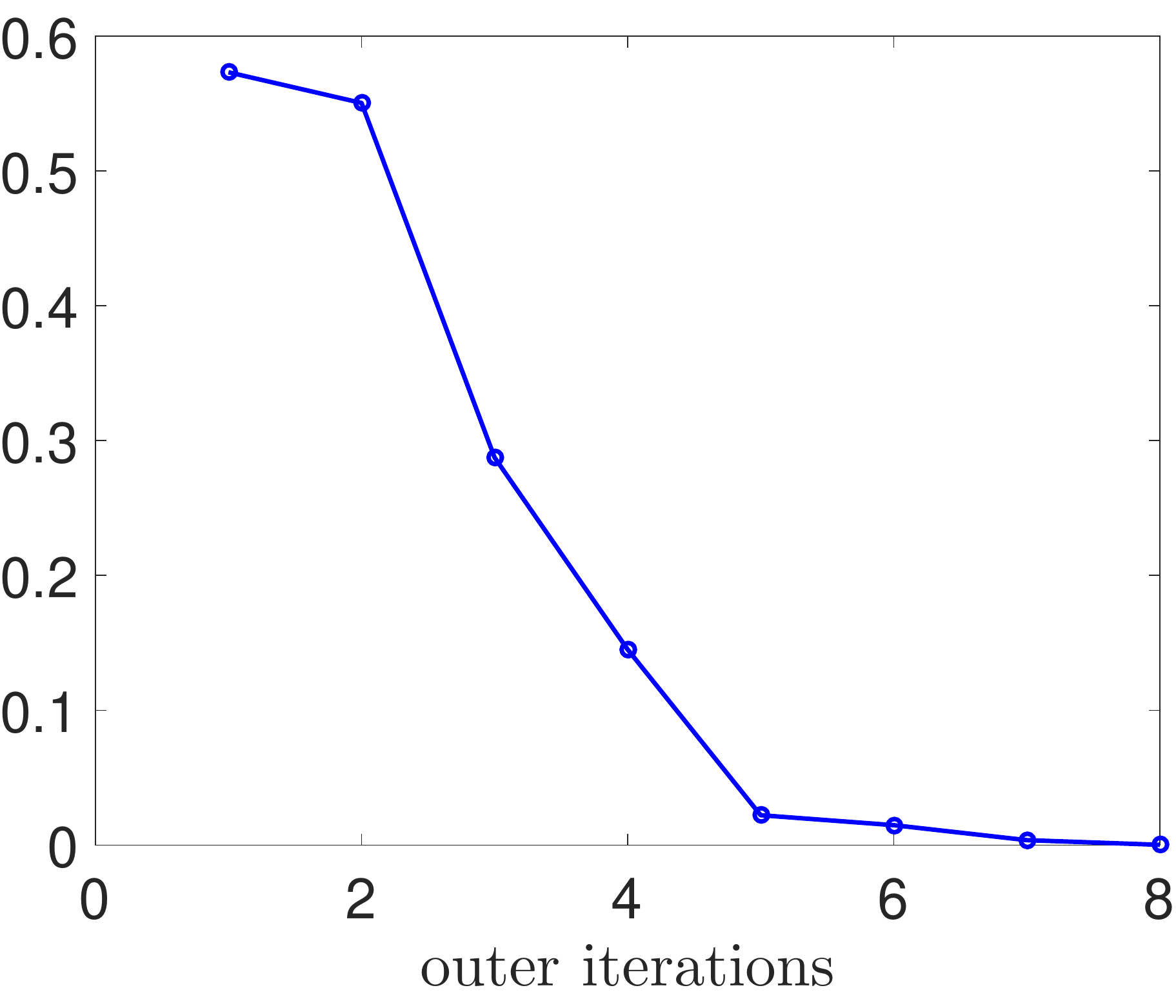} &
\includegraphics[width=3.5cm]{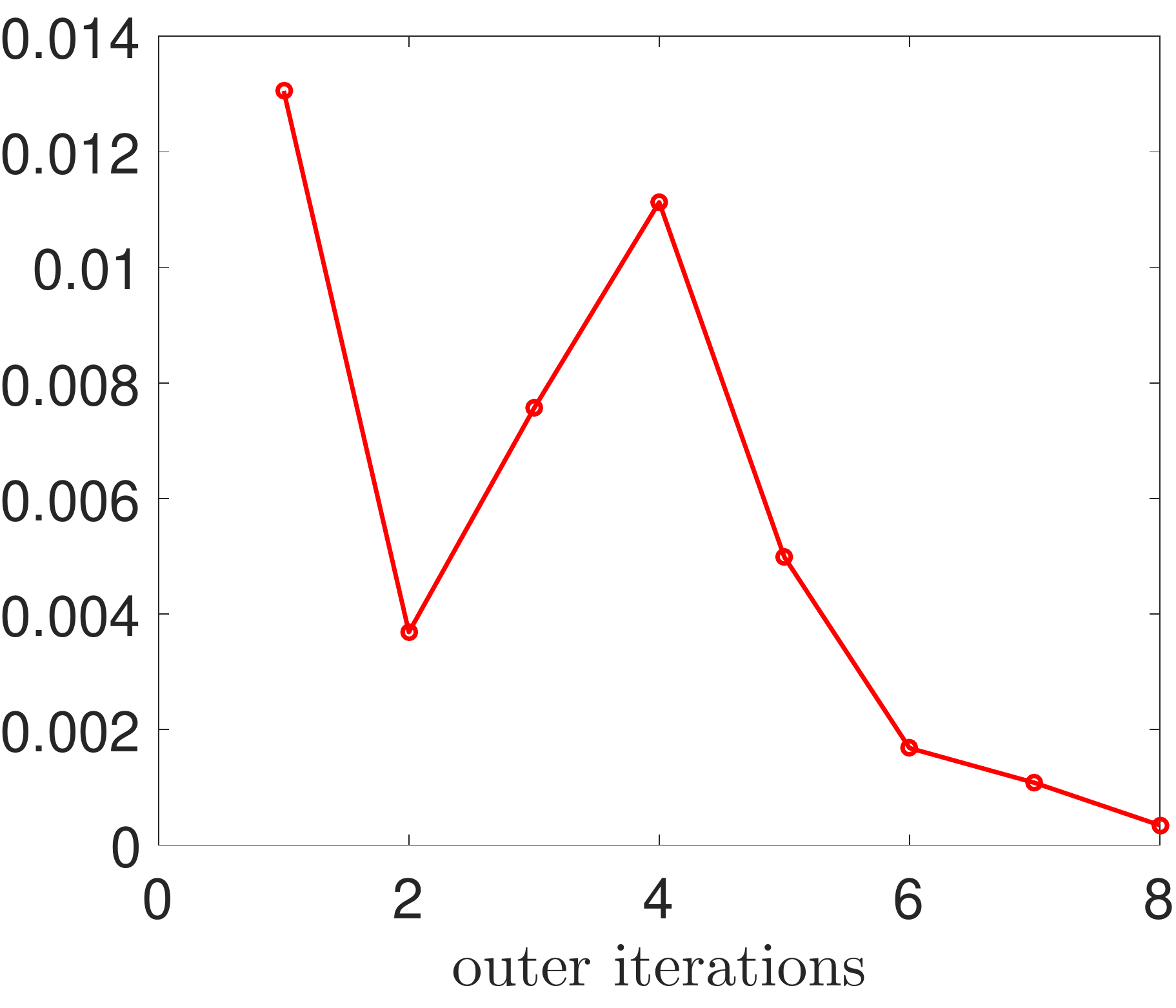} &
\includegraphics[width=3.5cm]{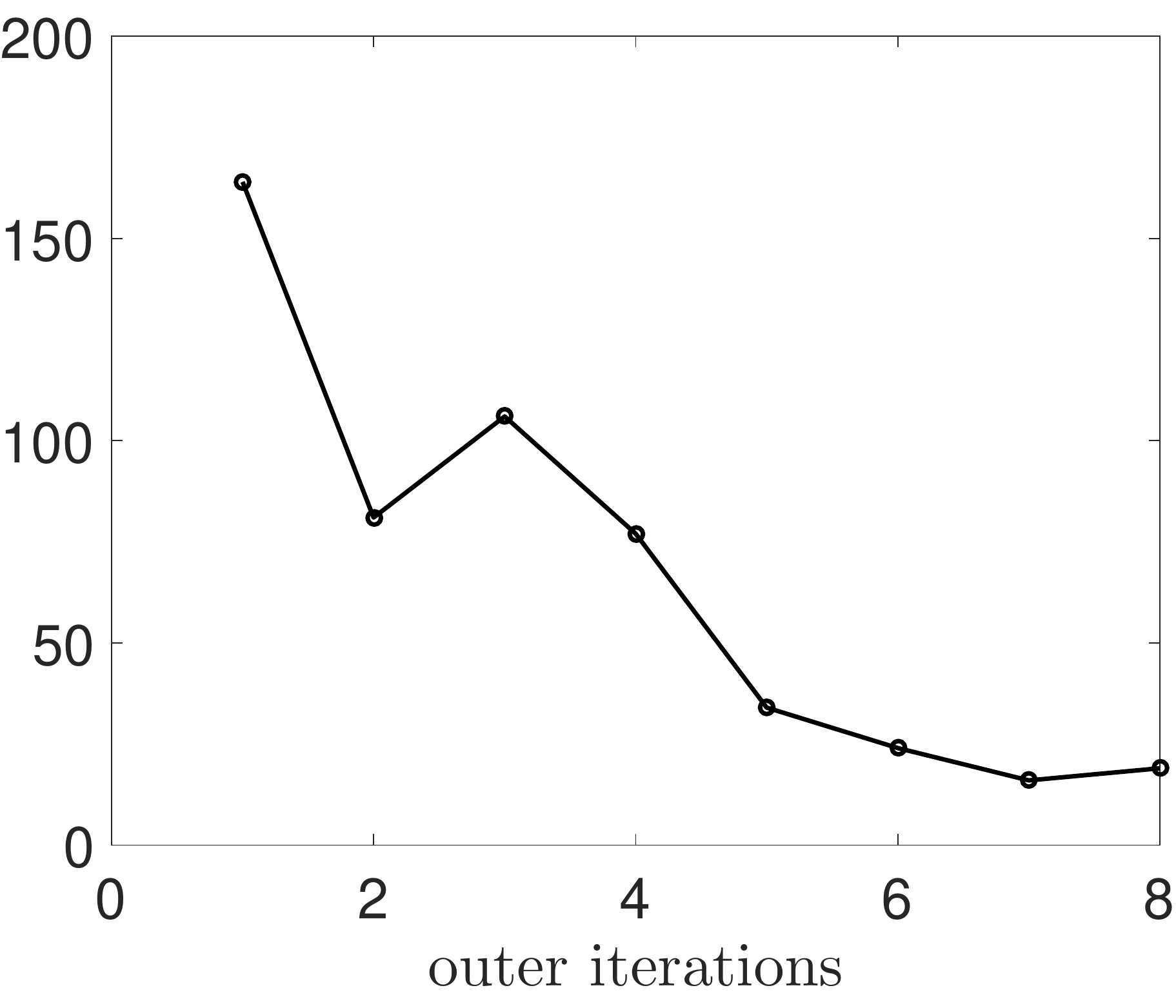}  \\
$\mathcal{A}_5$   &  $\mathcal{A}_5$  & $\mathcal{A}_5$ \\  [2mm]
\includegraphics[width=3.5cm]{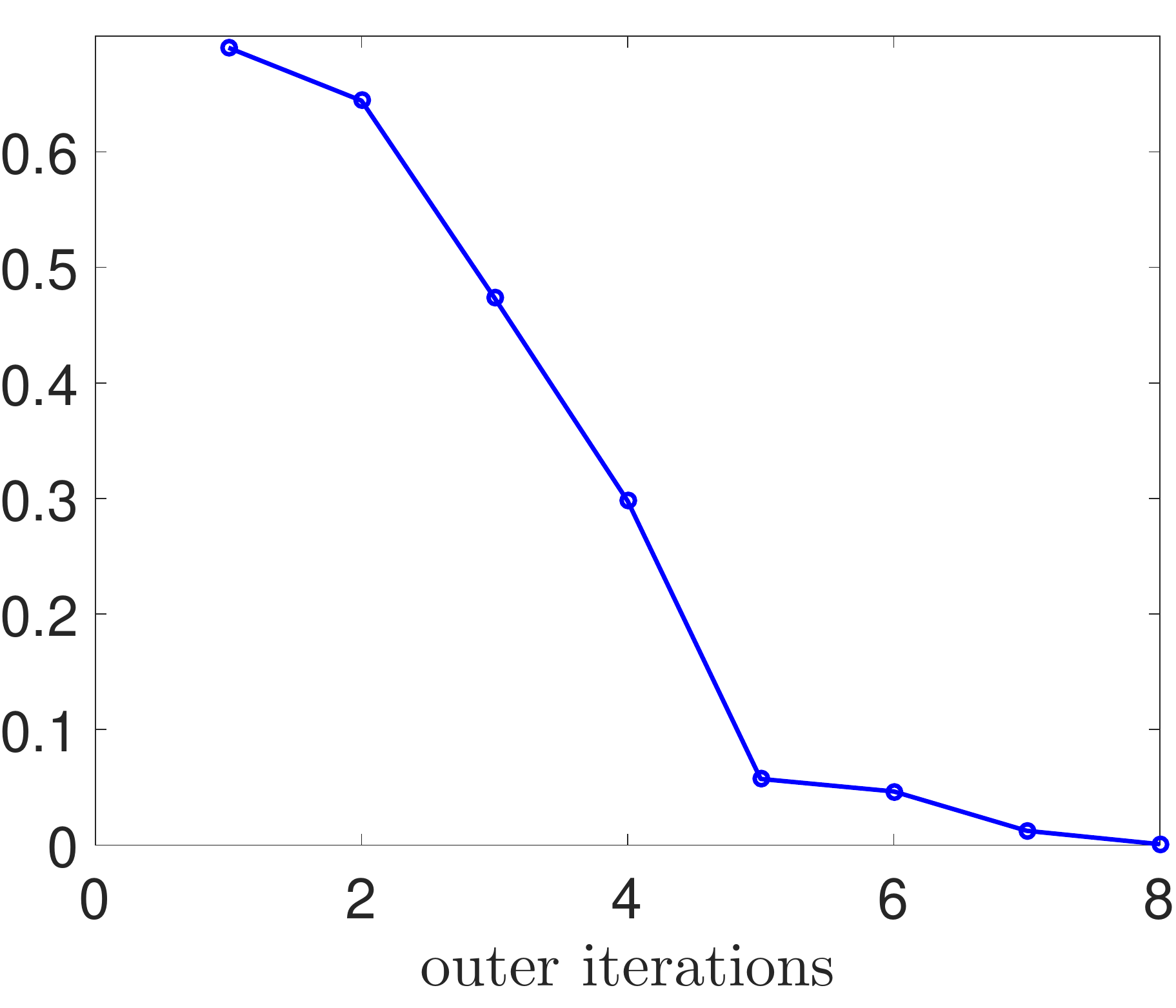} &
\includegraphics[width=3.5cm]{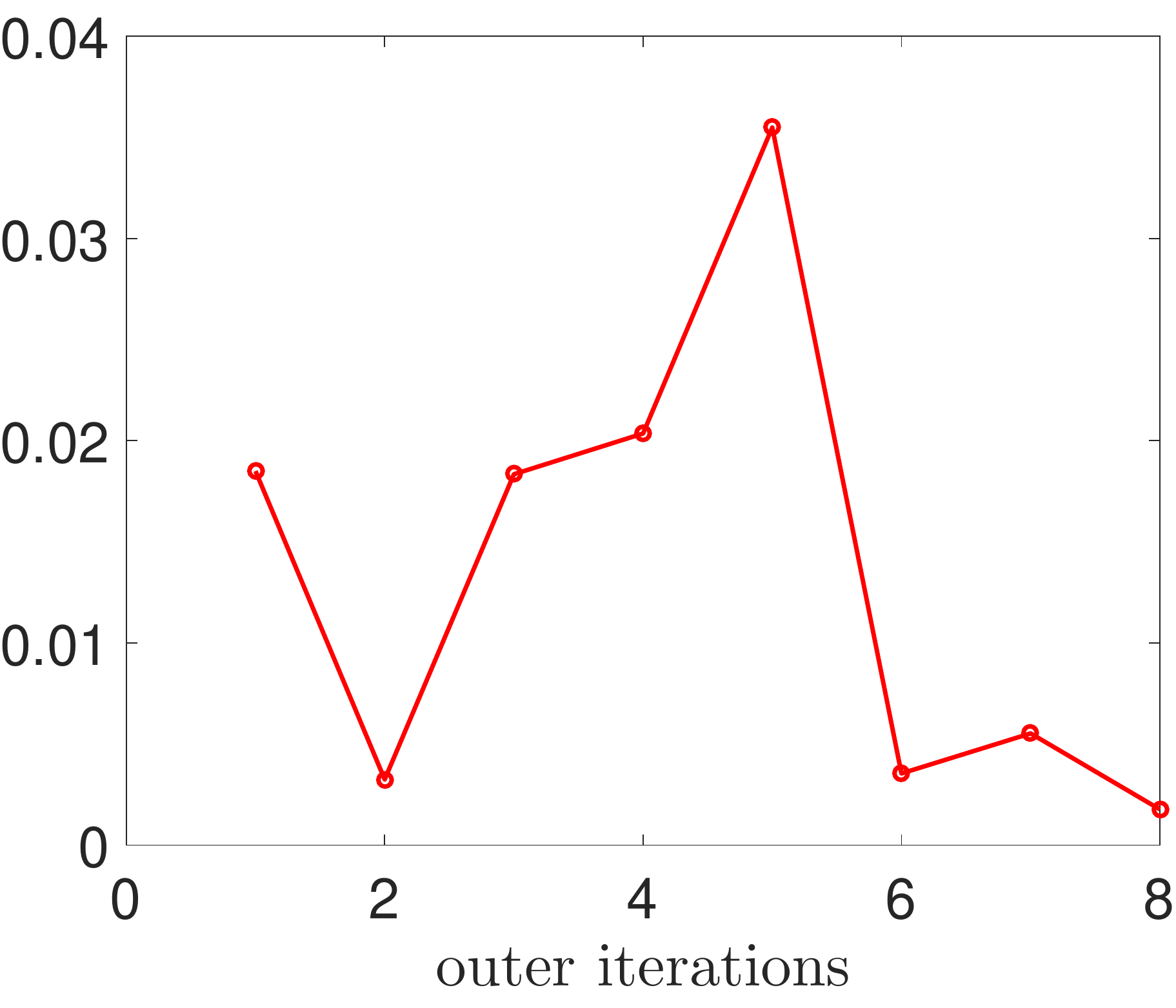} &
\includegraphics[width=3.5cm]{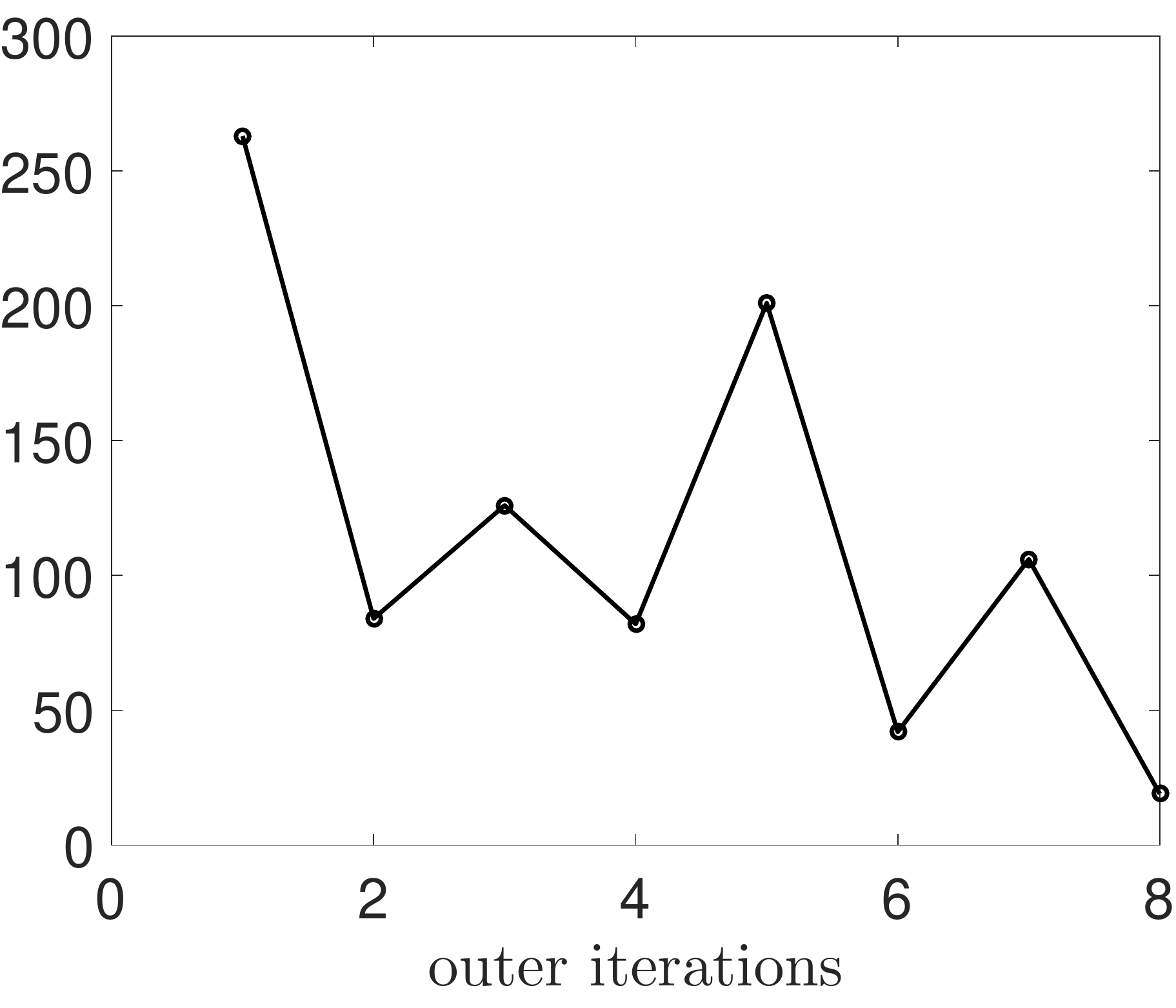}  \\
$\mathcal{A}_6$   &  $\mathcal{A}_6$  & $\mathcal{A}_6$ \\  [2mm]
\includegraphics[width=3.5cm]{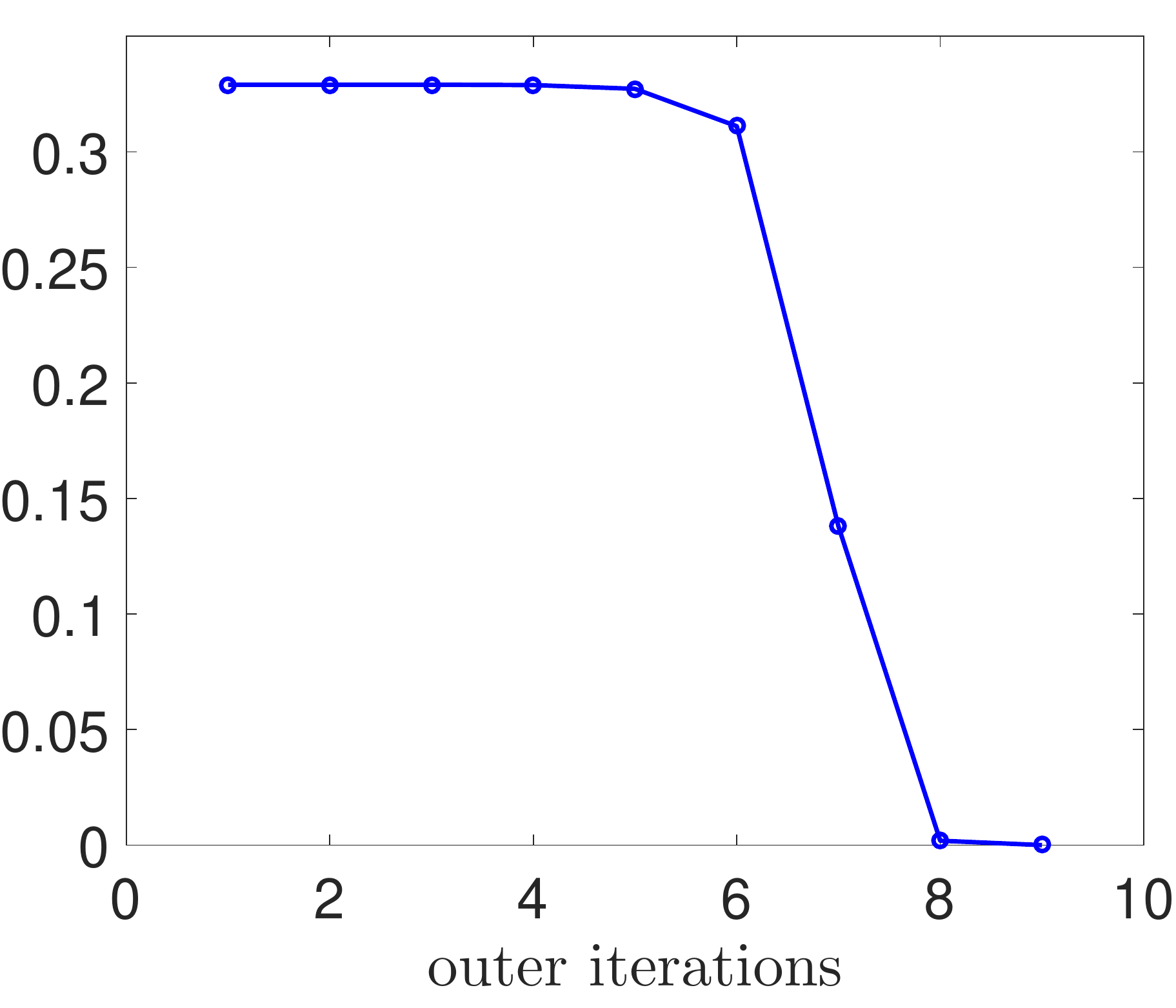} &
\includegraphics[width=3.5cm]{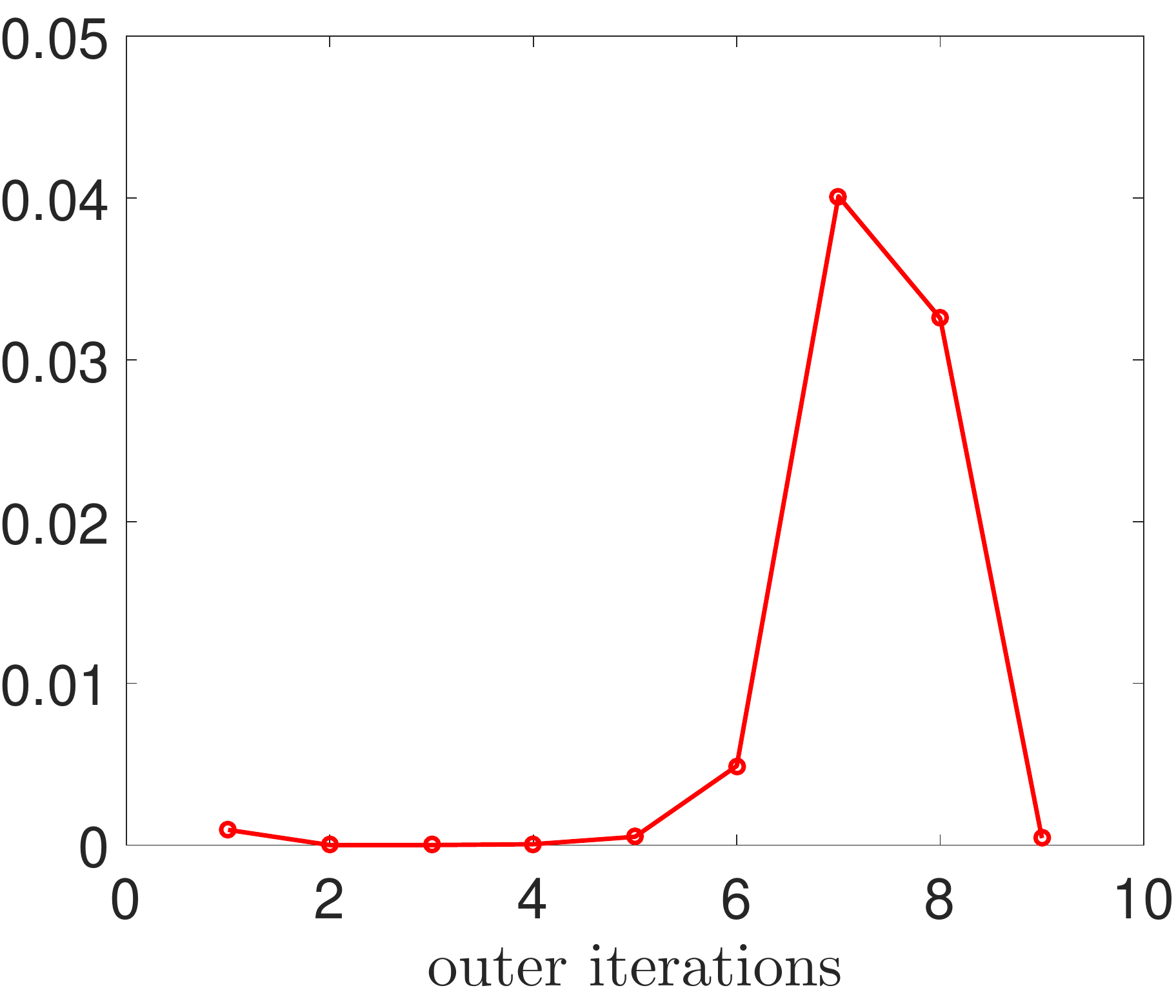} &
\includegraphics[width=3.5cm]{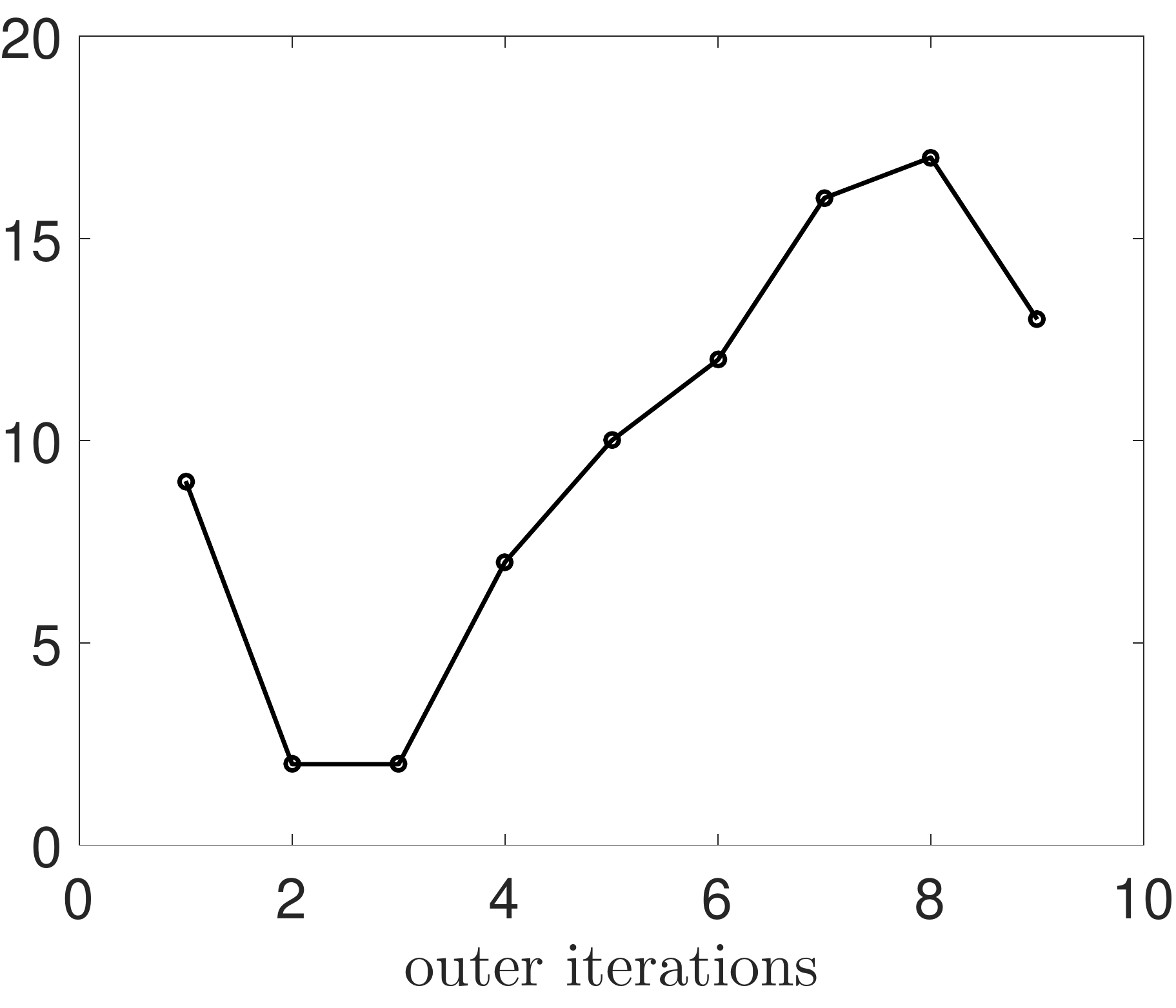}  \\
$\mathcal{A}_7$   &  $\mathcal{A}_7$  & $\mathcal{A}_7$ \\  [2mm]
\includegraphics[width=3.5cm]{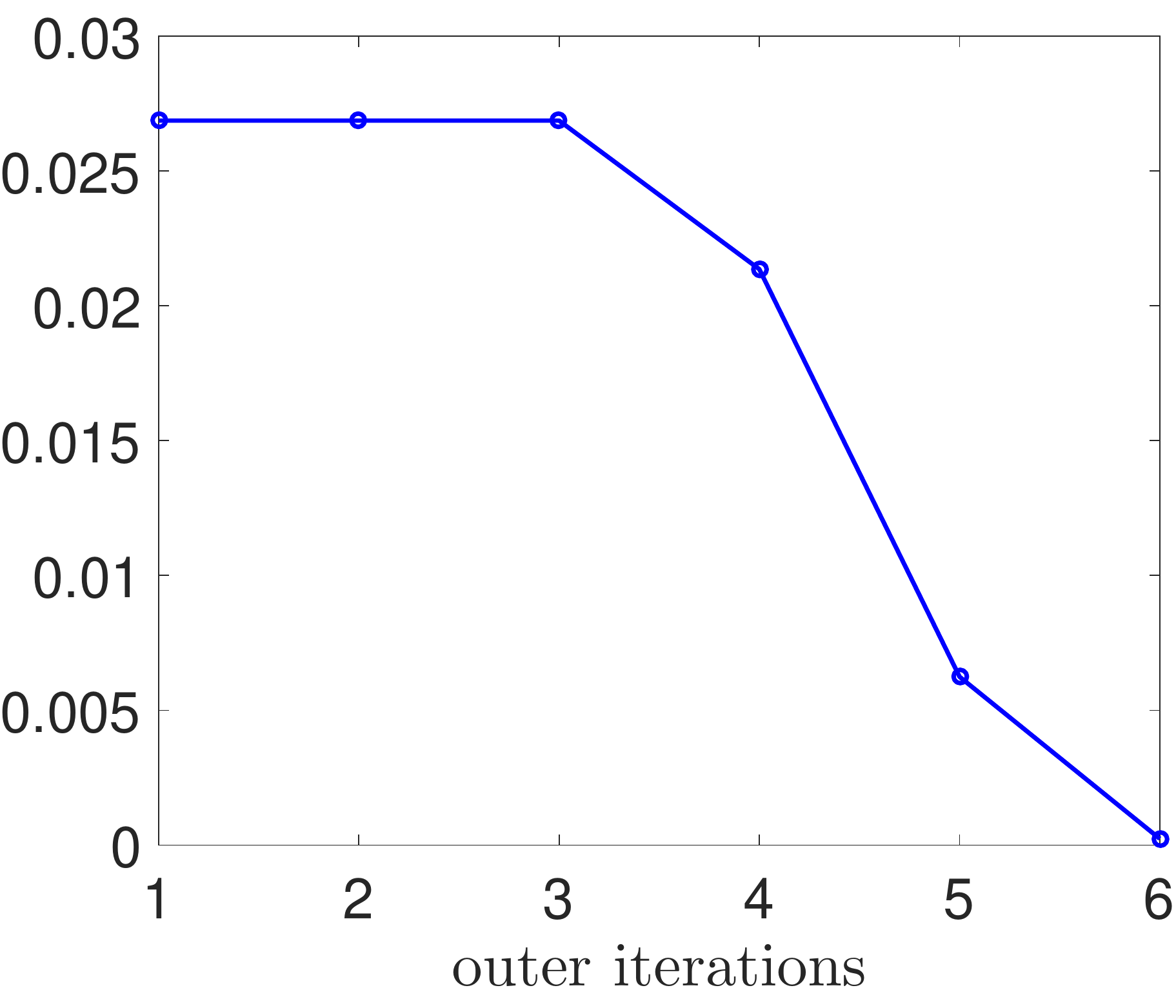} &
\includegraphics[width=3.5cm]{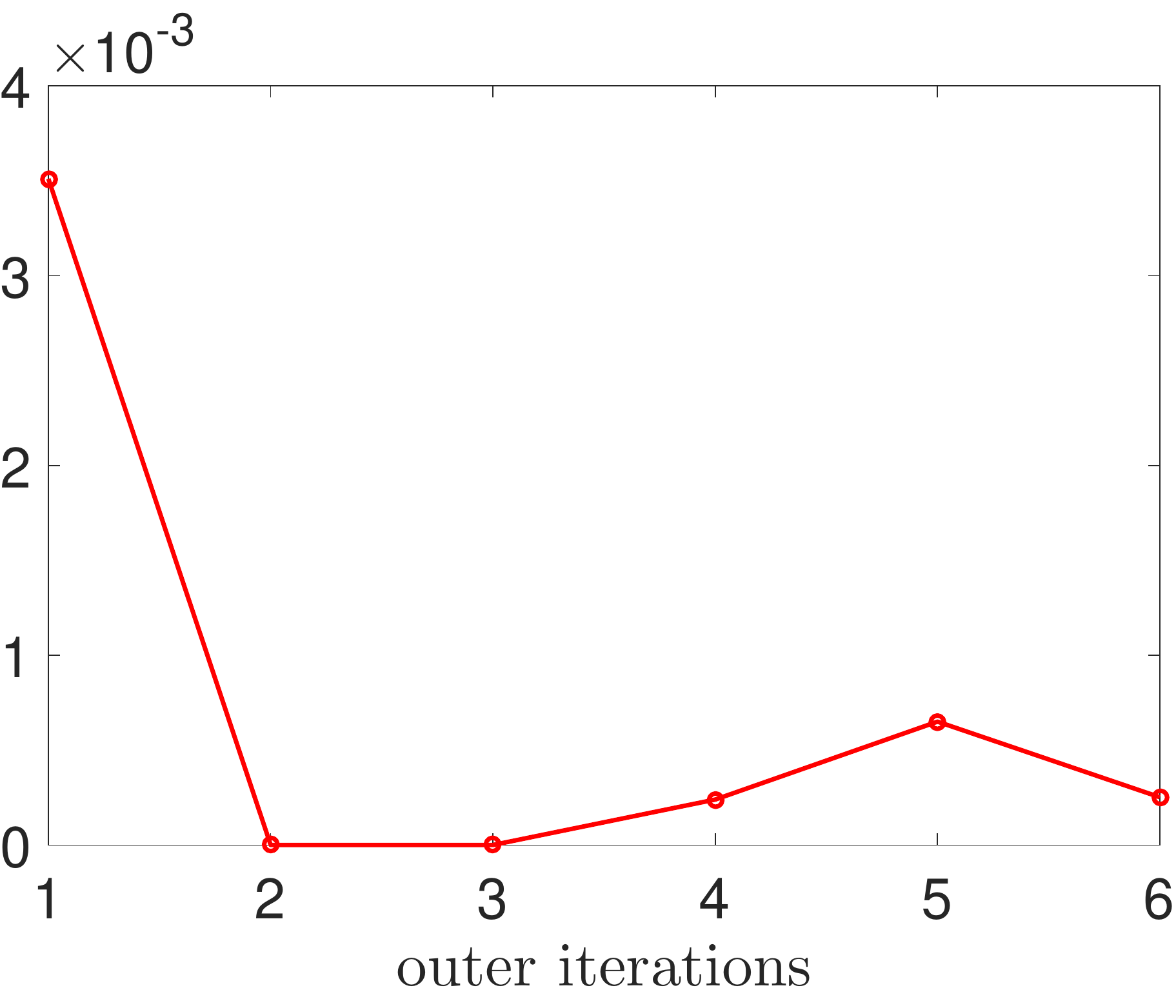} &
\includegraphics[width=3.5cm]{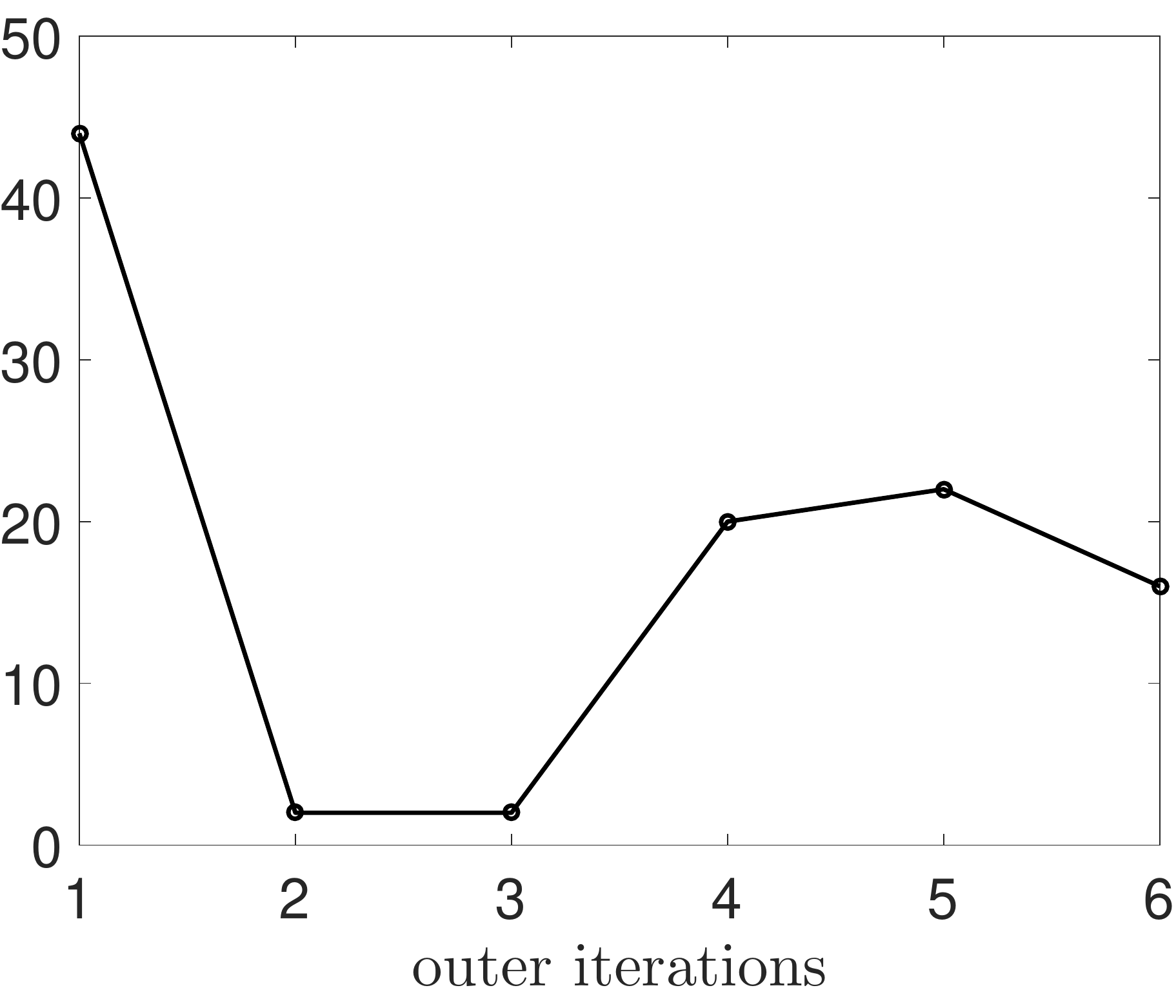}  \\
$\mathcal{A}_8$   &  $\mathcal{A}_8$  & $\mathcal{A}_8$
\end{tabular}
\caption{The convergence behaviour of OD-ALM on $\mathcal{A}_5,\ldots,\mathcal{A}_8$.
The three columns have the same meaning as in Figure \ref{1-4}.}
\label{5-8}
\end{center}
\end{figure}

The value of $\theta_{[k]}$ is decreasing as $k$ increases, but the situations differ greatly
for different tensors. For example, $\theta_{[k]}$ of $\mathcal{A}_7$ is almost unchanged
for the first five outer iterations, while $\theta_{[k]}$ of $\mathcal{A}_6$ decreases
from more than 0.6 to less than 0.1 in the first five outer iterations. Usually,
a big number of inner iterations brings a relatively big change of $\theta_{[k]}$.
For example, for $\mathcal{A}_3$, the number of inner iterations corresponding to $k=2$ is more than
250, resulting in the difference between $\theta_{[1]}$ and $\theta_{[2]}$ being more than 0.4.

The relative change between successive outer iterates can be relatively big for some tensors
even when $k$ is big, e.g., $\mathcal{A}_6$ and $\mathcal{A}_7$. This is data dependent.
In addition, the relative change is relatively small between the last two outer iterates for all cases.
The number of inner iterations reflects the relative change: A big number of
inner iterations often results in a big relative change between successive outer iterates.

\subsection{Comparison with other methods}

We compare our method with CP-ALS, the low rank orthogonal approximation of tensors (LROAT)
\cite{chen2008on} and the high-order power method for orthogonal low rank decomposition
(OLRD-HOP) \cite{wang2015orthogonal}. The method LROAT
fits an $(1,\cdots,N)$-orthogonal decomposition, and OLRD-HOP fits an $(N)$-orthogonal
decomposition. CP-ALS, LROAT and OLRD-HOP are all with the truncated HOSVD initialization.
CP-ALS terminates if the relative change in the function value is less than $10^{-8}$.
LROAT and OLRD-HOP terminate if the relative change between successive iterates is less
than $10^{-8}$. The maximum number of iterations is set to be 500 for all these three methods.
The results of the running time and the relative error are shown in Table
\ref{table_result}, which are averaged over 10 times repeated running.

\begin{table}[!ht]
\footnotesize
  \centering
  \renewcommand\tabcolsep{3.0pt}
  \caption{Comparison results of different methods. Here OD-ALM has been combined with
  Algorithm \ref{alg2}.}
  \label{table_result}
  {\renewcommand{\arraystretch}{1.2}
    \begin{tabular}{|c|c|cccccccc|}
    \hline
      & Method &  $\mathcal{A}_1$ & $\mathcal{A}_2$ & $\mathcal{A}_3$
      &  $\mathcal{A}_4$ & $\mathcal{A}_5$ & $\mathcal{A}_6$ & $\mathcal{A}_7$ & $\mathcal{A}_8$ \\
    \hline
    \hline
    \multirow{4}{*}{time} &  CP-ALS & 0.3 & 0.1 & 0.8 & 0.1 & 1.3 & 1.6 & 1.7 & 5.1 \\
    & OD-ALM & 2.7 & 1.6 & 3.3 & 0.5  & 4.8 & 13.2 & 15.8 & 15.3 \\
    & LROAT & 2.2 & 0.07 &  0.06 & 0.06 & 0.7 & 1.3 & 3.8 & 8.4 \\
    & OLRD-HOP & 0.6 & 0.07 & 1.3 & 1.3 & 2.1 & 2.5 & 1.2 & 2.9 \\
    \hline
    \multirow{4}{*}{RErr} & CP-ALS & 0.9953 & 0 & 0.0070 & 0.0993 & 0.1822 & 0.2363 & 0.2857 & 0.2278   \\
    & OD-ALM & 0.9954 &  0.0559 &  0.0227  & 0.0994 & 0.1831 & 0.2379 & 0.2931 & 0.2278 \\
    & LROAT &  0.9957 & 0.2890 &  0.1728  & 0.1640 & 0.3504 & 0.3263 & 0.4513  & 0.2530 \\
    & OLRD-HOP & 0.9954 & 0.1604 & 0.1117  & 0.1478 & 0.3333 & 0.3174 & 0.4510 & 0.2525  \\
    \hline
    \end{tabular}
    }
\end{table}

We can see that our method is much slower than the other methods. As discussed in
\cite{acar2011a}, the time cost of one outer iteration of OD-ALM is of the same order
of magnitude with CP-ALS. OD-ALM needs several outer iterations, resulting in a much longer time
cost than CP-ALS. The time costs of LROAT and OLRD-HOP are close to that of CP-ALS.

As for the relative error, CP-ALS is the best, OD-ALM is the second best, and OLRD-HOP outperforms
LROAT. This is not surprising because of the relationships among the decompositions fitted by different
methods. For $\mathcal{A}_4$ whose ground truth is an orthogonal rank-5 tensor,
the OD-ALM RErr is less than the noise level 0.1, which demonstrates the effectiveness
of our method. In addition, we can find that the difference between the CP-ALS RErr and the
OD-ALM RErr is very small for real-world tensors. For $\mathcal{A}_8$,
the results of these two methods are even the same. This suggests the potential
of orthogonal decompositions in fitting real-world tensors. The small gap between the
CP-ALS RErr and the OD-ALM RErr also indicates the effectiveness of our method in some sense.

Suppose $\mat{U}_j^{(n)}$ is the $n$th normalized factor matrix corresponding to the final
result for $\mathcal{A}_{j}$ obtained by our method. We record the results of
$\mat{U}_j^{(n)^T}\mat{U}_j^{(n)}$ for $j=3,5$ in one running:
{\tiny
\begin{align*}
& \mat{U}_3^{(1)^T}\mat{U}_3^{(1)}= & & \mat{U}_3^{(2)^T}\mat{U}_3^{(2)}= \\
& \begin{bmatrix}
1   &  0.6089  &  0.6264 &  -0.3196  &  0   \\
0.6089  &  1 &   0.9814  &  0.5454   &  0.7771 \\
0.6264  &  0.9814  &  1 &   0.4745  &  0.7039  \\
-0.3196  &  0.5454 &   0.4745 &   1 &  0.9472 \\
0  &    0.7771  &  0.7039  &  0.9472 &   1
\end{bmatrix} &
& \begin{bmatrix}
1 & 0 &   -0.1713 &  -0.9277 &  -0.8513  \\
0 &  1 & 0.9685  &  0.3720 &   0.5199  \\
-0.1713 &  0.9685  &  1 & 0.5136 &   0.6367 \\
-0.9277 &   0.3720  &  0.5136 &   1 & 0.9853 \\
-0.8513 &   0.5199 &   0.6367  &  0.9853 &   1
\end{bmatrix}   \\
& \mat{U}_3^{(3)^T}\mat{U}_3^{(3)}= & & \mat{U}_3^{(4)^T}\mat{U}_3^{(4)}= \\
& \begin{bmatrix}
1 &  0.2055 &  -0.5054 &  -0.9921 &  -0.9775 \\
0.2055  &  1 &  0.7289 &  -0.0832  &  0   \\
-0.5054 &  0.7289 &   1 &  0.6030 &   0.6618  \\
-0.9921 &  -0.0832 &   0.6030 &    1 &  0.9962 \\
-0.9775  &  0 &    0.6618  &  0.9962 &   1
\end{bmatrix} &
& \begin{bmatrix}
1 &  -1 & 0 &  0 &   -0.9996  \\
-1 &    1 &  0 &  0 &    0.9996  \\
0 &  0 &    1 &  0 &         0 \\
0 & 0 &   0 &   1 &  0 \\
-0.9996 &   0.9996   &      0   & 0 &   1
\end{bmatrix};   \\
& \mat{U}_5^{(1)^T}\mat{U}_5^{(1)}= & & \mat{U}_5^{(2)^T}\mat{U}_5^{(2)}= \\
& \begin{bmatrix}
1 &  0 &    0.7831 &  -0.4958 &  0 \\
0 & 1 & 0.0954  &  0.0805 &   -0.3413 \\
0.7831 &   0.0954  &  1 &  0 &   0 \\
-0.4958 &   0.0805 &  0 &    1 &  -0.2793 \\
0 &   -0.3413  &  0 &   -0.2793 &   1
\end{bmatrix} &
& \begin{bmatrix}
1 &  0.7868 & 0 &  0 &   -0.1452 \\
0.7868  &  1 & 0 &   0 &  0   \\
0 & 0 &    1  &   -0.6186 &  -0.0751  \\
0 & 0 &   -0.6186 &   1 &   0  \\
-0.1452  & 0 &  -0.0751 & 0 &   1
\end{bmatrix}   \\
& \mat{U}_5^{(3)^T}\mat{U}_5^{(3)}= & &   \\
& \begin{bmatrix}
1 &  0.9091 &   0.9243 &   0.9867 &  -0.9640  \\
0.9091 &   1 &   0.9992  &  0.9629 &  -0.9864  \\
0.9243  &  0.9992  &  1 &   0.9720 &  -0.9920  \\
0.9867  & 0.9629  &   0.9720  &  1 &  -0.9933  \\
-0.9640 & -0.9864 &  -0.9920  & -0.9933 &   1
\end{bmatrix}.
\end{align*}
}
We also compute $\mat{U}_j^{(n)^T}\mat{U}_j^{(n)}$ for other tensors and find that the appearance
of zeros in $\mat{U}_j^{(n)^T}\mat{U}_j^{(n)}$ has no regularity. Therefore, strongly
orthogonal decompositions cannot replace orthogonal decompositions in practical
applications in general.

\section{Conclusion}\label{sec6}

We establish several basic properties of orthogonal rank.
Orthogonal rank is different from tensor rank in many aspects. For example,
a subtensor may have a larger orthogonal rank than the
whole tensor, and orthogonal rank is lower semicontinuous.

To tackle the complicated orthogonality constraints, we employ the augmented Lagrangian method
to convert the constrained problem into an unconstrained
problem. A novel orthogonalization procedure is developed to make the final result satisfy
the orthogonality condition exactly. Numerical experiments show that the proposed method has
a great advantage over the existing methods for strongly orthogonal decompositions in terms of
the approximation error.

The main drawback of our method is the time cost. This is because the time cost of one outer iteration
of OD-ALM is of the same order of magnitude with that of CP-ALS, which is not very short, and we need
several outer iterations to obtain the final result. Although the ill-conditioning is not so severe
for the augmented Lagrangian method compared to the penalty method, preconditioning is a possible way
to speed up. For preconditioning of optimization methods for CP
decompositions, one can refer to \cite{sterck2012nonlinear,de2018nonlinearly}.
Preconditioning for OD-ALM can be studied as future work. A better strategy is to design an algorithm
with a framework different from the augmented Lagrangian method. This may need further exploration of
orthogonal decompositions.



\bibliographystyle{abbrv}
\bibliography{ref}

\end{document}